\newcommand{\reals}{\mathbb{R}}
\newcommand{\bracketb}[1]{\Big[#1\Big]}
\newcommand{\bracketc}[1]{\bigg[#1\bigg]}
\newcommand{\pb}[1]{\left\{#1\right\}}
\newcommand{\pba}[1]{\big\{#1\big\}}
\newcommand{\coma}[1]{\big[#1\big]}
\newcommand{\Ccom}[3]{\big[[#1,#2],#3\big]}
\newcommand{\norm}[1]{\left|\left|#1\right|\right|}
\newcommand{\fatcom}[1]{\ldbrack{#1}\rdbrack}
\newcommand{\abs}[1]{\left|#1\right|}
\newcommand{\para}[1]{\left(#1\right)}
\newcommand{\paraa}[1]{\big(#1\big)}
\newcommand{\parab}[1]{\Big(#1\Big)}
\newcommand{\parac}[1]{\bigg(#1\bigg)}
\newcommand{\diag}{\operatorname{diag}}
\newtheorem{theorem}{Theorem}[section]
\newtheorem{lemma}[theorem]{Lemma}
\newtheorem{proposition}[theorem]{Proposition}
\newtheorem{example}[theorem]{Example}
\theoremstyle{definition}
\newtheorem{definition}[theorem]{Definition}
\theoremstyle{remark}
\newtheorem{remark}[theorem]{Remark}
\numberwithin{equation}{section}
\newcommand{\tr}{\operatorname{tr}}
\newcommand{\Tr}{\operatorname{Tr}}
\newcommand{\Z}{\mathcal{Z}}
\newcommand{\B}{\mathcal{B}}
\renewcommand{\P}{\mathcal{P}}
\renewcommand{\S}{\mathcal{S}}
\newcommand{\T}{\mathcal{T}}
\newcommand{\C}{\mathcal{C}}
\newcommand{\W}{\mathcal{W}}
\newcommand{\R}{\mathcal{R}}
\newcommand{\J}{\mathcal{J}}
\newcommand{\JM}{\mathcal{J}_M}
\newcommand{\SA}{\S_A}
\newcommand{\xv}{\vec{x}}
\newcommand{\yv}{\vec{y}}
\newcommand{\gb}{\,\bar{\!g}}
\renewcommand{\d}{\partial}
\newcommand{\TSigma}{T\Sigma}
\newcommand{\eps}{\varepsilon}
\newcommand{\nablab}{\bar{\nabla}}
\newcommand{\Gammab}{\bar{\Gamma}}
\newcommand{\Rb}{\bar{R}}
\newcommand{\TN}{T^{(N)}}
\newcommand{\vphi}{\varphi}
\newcommand{\Nh}{\hat{N}}
\newcommand{\av}{\vec{a}}
\newcommand{\cv}{\vec{c}}
\newcommand{\qv}{\vec{q}}
\newcommand{\nAv}{\vec{n}_A}
\renewcommand{\mid}{\mathds{1}}
\newcommand{\D}{\mathcal{D}}
\newcommand{\uh}{\hat{u}}
\newcommand{\uha}{\uh_\alpha}
\newcommand{\vh}{\hat{v}}
\newcommand{\vha}{\vh_\alpha}
\newcommand{\Xa}{X_\alpha}
\newcommand{\ha}{\hbar_\alpha}
\newcommand{\hbara}{\ha}
\newcommand{\Na}{N_\alpha}
\newcommand{\Ta}{T_\alpha}
\newcommand{\Sa}{S_\alpha}
\newcommand{\limainfty}{\lim_{\alpha\to\infty}}
\newcommand{\limai}{\limainfty}
\newcommand{\la}{\lambda_\alpha}
\newcommand{\Kh}{\hat{K}}
\newcommand{\Kha}{\hat{K}_\alpha}
\newcommand{\Dh}{\hat{D}}
\newcommand{\Dha}{\Dh_\alpha}
\newcommand{\fh}{\hat{f}}
\newcommand{\fha}{\fh_\alpha}
\newcommand{\limNinf}{\lim_{N\to\infty}}
\newcommand{\pv}{\vec{p}}
\newcommand{\dv}{\vec{d}}
\newcommand{\hh}{\hat{h}}
\newcommand{\hha}{\hh_\alpha}
\newcommand{\Deltah}{\hat{\Delta}}
\newcommand{\Deltaha}{\Deltah_\alpha}
\renewcommand{\dh}{\,\hat{\!\partial}}
\newcommand{\dha}{\dh_\alpha}
\newcommand{\NAa}{N_{A\alpha}}
\newcommand{\thetah}{\hat{\theta}}
\newcommand{\nv}{\vec{n}}
\newcommand{\ff}{f\!f}
\newcommand{\Wd}{W^\dagger}
\newcommand{\Ch}{\hat{C}}
\newcommand{\Gammah}{\hat{\Gamma}}
\newcommand{\gammah}{\hat{\gamma}}
\newcommand{\gammaha}{\gammah_\alpha}
\newcommand{\chih}{\hat{\chi}}
\newcommand{\Gh}{\hat{G}}
\newcommand{\Ph}{\hat{\P}}
\newcommand{\ShA}{\hat{\S}_A}
\newcommand{\trh}{\widehat{\operatorname{tr}}\,}
\newcommand{\mv}{\vec{m}}
\newcommand{\vphiv}{\vec{\vphi}}
\newcommand{\gd}{g^\dagger}
\newcommand{\hd}{h^\dagger}
\title[]{Multi linear formulation of differential geometry and matrix regularizations}
\author{Joakim Arnlind}
\address[Joakim Arnlind]{Max Planck Institute for Gravitational Physics\\ 
Am M\"uhlenberg 1\\
D-14476 Golm\\
Germany}
\email{joakim.arnlind@aei.mpg.de}
\author{Jens Hoppe}
\address[Jens Hoppe]{Department of Mathematics\\
KTH\\
S-10044 Stockholm\\
Sweden}
\email{hoppe@math.kth.se}
\author{Gerhard Huisken}
\address[Gerhard Huisken]{Max Planck Institute for Gravitational Physics\\
Am M\"uhlenberg 1\\
D-14476 Golm\\
Germany}
\email{gerhard.huisken@aei.mpg.de}
\thanks{}
\subjclass[2000]{}
\keywords{}
\begin{document}

\begin{abstract}
  We prove that many aspects of the differential geometry of embedded
  Riemannian manifolds can be formulated in terms of multi linear
  algebraic structures on the space of smooth functions.  In particular,
  we find algebraic expressions for Weingarten's formula, the Ricci
  curvature and the Codazzi-Mainardi equations.

  For matrix analogues of embedded surfaces we define discrete
  curvatures and Euler characteristics, and a non-commutative
  Gauss--Bonnet theorem is shown to follow.  We derive simple
  expressions for the discrete Gauss curvature in terms of matrices
  representing the embedding coordinates, and a large class of
  explicit examples is provided. Furthermore, we illustrate the fact
  that techniques from differential geometry can carry over to
  matrix analogues by proving that a bound on the discrete Gauss
  curvature implies a bound on the eigenvalues of the discrete Laplace
  operator.
\end{abstract}

\maketitle

\tableofcontents

\section{Introduction}

\noindent It is generally interesting to study in what ways
information about the geometry of a differentiable manifold $\Sigma$
can be extracted as algebraic properties of the algebra of smooth
functions $C^\infty(\Sigma)$. In case $\Sigma$ is a Poisson manifold,
this algebra has a second (apart from the commutative multiplication
of functions) bilinear (non-associative) algebra structure, the
Poisson bracket. The bracket is compatible with the commutative
multiplication via Leibniz rule, thus carrying the basic properties of
a derivation.

On a surface $\Sigma$, with local coordinates $u^1$ and $u^2$, one can
define
\begin{align*}
  \{f,h\} = \frac{1}{\sqrt{g}}\parac{\frac{\d f}{\d u^1}\frac{\d h}{\d u^2}-
  \frac{\d h}{\d u^1}\frac{\d f}{\d u^2}},
\end{align*}
where $g$ is the determinant of the induced metric tensor, and one
readily checks that $\paraa{C^\infty(\Sigma),\{\cdot,\cdot\}}$ is a
Poisson algebra. Having only this very particular combination of
derivatives at hand, it seems at first unlikely that one can encode
geometric information of $\Sigma$ in Poisson algebraic
expressions. Surprisingly, it turns out that many differential
geometric quantities can be computed in a completely algebraic way,
cp.  Theorem \ref{thm:ricciCurvature} and Theorem
\ref{thm:CMNambu}. For instance, the Gaussian curvature of a surface
embedded in $\reals^m$ can be written as
\begin{align}\label{eq:introKpb}
  K=\sum_{j,k,l=1}^m\parac{\frac{1}{2}\{\{x^j,x^k\},x^k\}\{\{x^j,x^l\},x^l\}
    -\frac{1}{4}\{\{x^j,x^k\},x^l\}\{\{x^j,x^k\},x^l\}},
\end{align}
where $x^i(u^1,u^2)$ are the embedding coordinates of the surface.

For a general $n$-dimensional manifold $\Sigma$, we are led to
consider Nambu brackets \cite{n:generalizedmech}, i.e. multi-linear
alternating $n$-ary maps from $C^\infty(\Sigma)\times\cdots\times
C^\infty(\Sigma)$ to $C^\infty(\Sigma)$, defined by
\begin{align*}
  \{f_1,\ldots,f_n\} = \frac{1}{\sqrt{g}}\eps^{a_1\cdots a_n}\paraa{\d_{a_1}f_1}\cdots\paraa{\d_{a_n} f_n}.
\end{align*}
In the case of surfaces, our initial motivation for studying the
problem came from matrix regularizations of Membrane Theory. Classical
solutions in Membrane Theory are 3-manifolds with vanishing mean
curvature in $\reals^{1,d}$. Considering one of the coordinates to be
time, the problem can also be formulated in a dynamical way as
surfaces sweeping out volumes of vanishing mean curvature. In this
context, a regularization was introduced replacing the infinite
dimensional function algebra on the surface by an algebra of $N\times
N$ matrices \cite{h:phdthesis}. If we let $\Ta$ be a linear map from
smooth functions to hermitian $\Na\times \Na$ matrices, the main
properties of the regularization are
\begin{align*}
  &\limainfty\norm{\Ta(f)\Ta(g)-\Ta(fg)}=0,\\
  &\limainfty\norm{\frac{1}{i\hbara}[\Ta(f),\Ta(h)]-\Ta(\{f,h\})}=0,
\end{align*}
where $\hbara$ is a real valued function tending to zero as
$\Na\to\infty$
(see Section \ref{sec:matrixRegularizations} for details), and
therefore it is natural to regularize the system by replacing
(commutative) multiplication of functions by (non-commutative)
multiplication of matrices and Poisson brackets of functions by
commutators of matrices.

Although we may very well consider $\Ta(\frac{\d f}{\d u^1})$, its
relation to $\Ta(f)$ is in general not simple. However, the particular
combination of derivatives in $\Ta(\{f,h\})$ is expressed in terms of
a commutator of $\Ta(f)$ and $\Ta(h)$. In the context of Membrane
Theory, it is desirable to have geometrical quantities in a form that
can easily be regularized, which is the case for any expression
constructed out of multiplications and Poisson brackets.  For
instance, solving the equations of motion for the regularized membrane
gives sequences of matrices that correspond to the embedding
coordinates of the surface. Since the set of solutions contains
regularizations of surfaces of arbitrary topology, one would like to
be able to compute the genus corresponding to particular
solutions. The regularized form of (\ref{eq:introKpb}) provides a way
of resolving this problem.

The paper is organized as follows: In Section \ref{sec:preliminaries}
we introduce the relevant notation by recalling some basic facts about
submanifolds. In Section \ref{sec:nambuPoissonFormulation} we
formulate several basic differential geometric objects in terms of
Nambu brackets, and in Section \ref{sec:normalVectors} we provide a
construction of a set of orthonormal basis vectors of the normal
space. Section \ref{sec:CodazziMainardi} is devoted to the study of
the Codazzi-Mainardi equations and how one can rewrite them in terms
of Nambu brackets. In Section \ref{sec:surfaces} we study the
particular case of surfaces, for which many of the introduced formulas
and concepts are particularly nice and in which case one can construct
the complex structure in terms of Poisson brackets.

In the second part of the paper, starting with Section
\ref{sec:matrixRegularizations}, we study the implications of our
results for matrix regularizations of compact surfaces. In particular,
a discrete version of the Gauss-Bonnet theorem is derived in Section
\ref{sec:discreteGB} and a proof that the discrete Gauss curvature
bounds the eigenvalues of the discrete Laplacian is found in Section
\ref{sec:laplaceBound}.

\section{Preliminaries}\label{sec:preliminaries}

\noindent To introduce the relevant notations, we shall recall some
basic facts about submanifolds, in particular Gauss' and Weingarten's
equations (see
e.g. \cite{kn:foundationsDiffGeometryI,kn:foundationsDiffGeometryII}
for details). For $n\geq 2$, let $\Sigma$ be a $n$-dimensional manifold embedded in a
Riemannian manifold $M$ with $\dim M=n+p\equiv m$. Local coordinates on $M$ will be denoted by
$x^1,\ldots,x^m$, local coordinates on $\Sigma$ by $u^1,\ldots,u^n$,
and we regard $x^1,\ldots,x^m$ as being functions of $u^1,\ldots,u^n$
providing the embedding of $\Sigma$ in $M$. The metric tensor on $M$
is denoted by $\gb_{ij}$ and the induced metric on $\Sigma$ by
$g_{ab}$; indices $i,j,k,l,n$ run from $1$ to $m$, indices
$a,b,c,d,p,q$ run from $1$ to $n$ and indices $A,B,C,D$ run from $1$
to $p$. Furthermore, the covariant derivative and the Christoffel
symbols in $M$ will be denoted by $\nablab$ and $\Gammab^{i}_{jk}$
respectively.

The tangent space $\TSigma$ is regarded as a subspace of the tangent
space $TM$ and at each point of $\Sigma$ one can choose
$e_a=(\d_ax^i)\d_i$ as basis vectors in $\TSigma$, and in this basis
we define $g_{ab}=\gb(e_a,e_b)$. Moreover, we choose a set of normal
vectors $N_A$, for $A=1,\ldots,p$, such that
$\gb(N_A,N_B)=\delta_{AB}$ and $\gb(N_A,e_a)=0$.

The formulas of Gauss and Weingarten split the covariant derivative in
$M$ into tangential and normal components as
\begin{align}
  &\nablab_X Y = \nabla_X Y + \alpha(X,Y)\label{eq:GaussFormula}\\
  &\nablab_XN_A = -W_A(X) + D_XN_A\label{eq:WeingartenFormula}
\end{align}
where $X,Y\in \TSigma$ and $\nabla_X Y$, $W_A(X)\in\TSigma$ and
$\alpha(X,Y)$, $D_XN_A\in\TSigma^\perp$. By expanding $\alpha(X,Y)$ in
the basis $\{N_1,\ldots,N_p\}$ one can write (\ref{eq:GaussFormula}) as
\begin{align}
  &\nablab_X Y = \nabla_X Y + \sum_{A=1}^ph_A(X,Y)N_A,\label{eq:GaussFormulah}
\end{align}
and we set $h_{A,ab} = h_A(e_a,e_b)$. From the above equations one derives the relation
\begin{align}
  h_{A,ab} &= -\gb\paraa{e_a,\nablab_b N_A},
\end{align}
as well as Weingarten's equation
\begin{align}
  h_A(X,Y) = \gb\paraa{W_A(X),Y},  
\end{align}
which implies that $(W_A)^a_b = g^{ac}h_{A,cb}$, where $g^{ab}$
denotes the inverse of $g_{ab}$.  

From formulas (\ref{eq:GaussFormula}) and (\ref{eq:WeingartenFormula})
one obtains Gauss' equation, i.e. an expression for the curvature $R$
of $\Sigma$ in terms of the curvature $\Rb$ of $M$, as
\begin{equation}\label{eq:GaussEquation}
  \begin{split}
    g\paraa{R(X,Y)Z,V} =
    \gb&\paraa{\Rb(X,Y)Z,V}-\gb\paraa{\alpha(X,Z),\alpha(Y,V)}\\
    &+\gb\paraa{\alpha(Y,Z),\alpha(X,V)},    
  \end{split}
\end{equation}
where $X,Y,Z,V\in\TSigma$. As we shall later on consider the Ricci curvature,
let us note that (\ref{eq:GaussEquation}) implies
\begin{align}
  \R^p_b = g^{pd}g^{ac}\gb\paraa{\Rb(e_c,e_d)e_b,e_a}
  +\sum_{A=1}^p\bracketb{(W_A)^a_a(W_A)_b^p-(W_A^2)_b^p}
\end{align}
where $\R$ is the Ricci curvature of $\Sigma$ considered as a map
$\TSigma\to\TSigma$.  We also recall the mean curvature vector,
defined as
\begin{align}
  H = \frac{1}{n}\sum_{A=1}^p\paraa{\tr W_A}N_A.
\end{align}

\section{Nambu bracket formulation}\label{sec:nambuPoissonFormulation}

\noindent In this section we will prove that one can express many
aspects of the differential geometry of an embedded manifold $\Sigma$
in terms of a Nambu bracket introduced on $C^\infty(\Sigma)$.
Let $\rho:\Sigma\to\reals$ be an arbitrary non-vanishing
density and define
\begin{align}\label{eq:PbracketDef}
  \{f_1,\ldots,f_n\} = \frac{1}{\rho}\eps^{a_1\cdots a_n}\paraa{\d_{a_1}f_1}\cdots\paraa{\d_{a_n} f_n}
\end{align}
for all $f_1,\ldots,f_n\in C^\infty(\Sigma)$, where $\eps^{a_1\cdots
  a_n}$ is the totally antisymmetric Levi-Civita symbol with
$\eps^{12\cdots n}=1$. Together with this multi-linear map, $\Sigma$ is
a Nambu-Poisson manifold.

The above Nambu bracket arises from the choice of a
volume form on $\Sigma$. Namely, let $\omega$ be a volume form and
define $\{f_1,\ldots,f_n\}$ via the formula
\begin{align}\label{eq:nambuVolumeForm}
 \{f_1,\ldots,f_n\}\omega = df_1\wedge\cdots\wedge df_n.
\end{align}
Writing $\omega=\rho\, du^1\wedge\cdots\wedge du^n$ in local
coordinates, and evaluating both sides of (\ref{eq:nambuVolumeForm})
on the tangent vectors $\d_{u^1},\ldots,\d_{u^n}$ gives
\begin{align*}
  \{f_1,\ldots,f_n\} = \frac{1}{\rho}\det\parac{\frac{\d(f_1,\ldots,f_n)}{\d(u^1,\ldots,u^n)}}
  =\frac{1}{\rho}\eps^{a_1\cdots a_n}\paraa{\d_{a_1}f_1}\cdots\paraa{\d_{a_n}f_n}.
\end{align*}
To define the objects which we will consider, it is convenient to
introduce some notation. Let
$x^1(u^1,\ldots,u^n),\ldots,x^m(u^1,\ldots,u^n)$ be the embedding
coordinates of $\Sigma$ into $M$, and let $n_A^i(u^1,\ldots,u^n)$ denote the
components of the orthonormal vectors $N_A$, normal to $\TSigma$. Using
multi-indices $I=i_1\cdots i_{n-1}$ and $\av=a_1\cdots a_{n-1}$ we define
\begin{align*}
  &\{f,\xv^I\} \equiv \{f,x^{i_1},x^{i_2},\ldots,x^{i_{n-1}}\}\\
  &\{f,\nAv^I\} \equiv \{f,n_A^{i_1},n_A^{i_2},\ldots,n_A^{i_{n-1}}\},
\end{align*}
together with
\begin{align*}
  &\d_{\av}\xv^I \equiv \paraa{\d_{a_1}x^{i_1}}\paraa{\d_{a_2}x^{i_2}}\cdots\paraa{\d_{a_{n-1}}x^{i_{n-1}}}\\
  &\paraa{\nablab_{\av}\nAv}^I \equiv \paraa{\nablab_{a_1}N_A}^{i_1}\paraa{\nablab_{a_2}N_A}^{i_2}\cdots\paraa{\nablab_{a_{n-1}}N_A}^{i_{n-1}}\\
  &\gb_{IJ} \equiv \gb_{i_1j_1}\gb_{i_2j_2}\cdots\gb_{i_{n-1}j_{n-1}}\\
  &g_{\av\cv}\equiv g_{a_1c_1}g_{a_2c_2}\cdots g_{a_{n-1}c_{n-1}}.
\end{align*}
We now introduce the main objects of our study
\begin{align}
  \P^{iJ} &= \frac{1}{\sqrt{(n-1)!}}\{x^i,\xv^J\} = \frac{1}{\sqrt{(n-1)!}}\frac{\eps^{a\av}}{\rho}\paraa{\d_ax^i}\paraa{\d_{\av}\xv^J}\\
  \S_A^{iJ}&=\frac{(-1)^n}{\sqrt{(n-1)!}}\frac{\eps^{a\av}}{\rho}\paraa{\d_ax^i}\paraa{\nablab_{\av}\nAv}^J\\
  \T_A^{Ij} &=\frac{(-1)^n}{\sqrt{(n-1)!}}\frac{\eps^{\av a}}{\rho}\paraa{\d_{\av}\xv^I}\paraa{\nablab_aN_A}^j
\end{align}
from which we construct
\begin{align}
  \paraa{\P^2}^{ik} &= \P^{iI}\P^{kJ}\gb_{IJ}\\
  \paraa{\B_A}^{ik} &= \P^{iI}(\T_A)^{Jk}\gb_{IJ}\\
  \paraa{\S_A\T_A}^{ik} &=(\S_A)^{iI}(\T_A)^{Jk}\gb_{IJ}.
\end{align}
By lowering the second index with the metric $\gb$, we will also
consider $\P^2$, $\B_A$ and $\T_A\S_A$ as maps $TM\to TM$. Note that
both $\S_A$ and $\T_A$ can be written in terms of Nambu brackets, e.g.
\begin{align*}
  \T_A^{Ij} = \frac{(-1)^n}{\sqrt{(n-1)!}}\bracketb{\{\xv^I,n_A^j\}+\{\xv^I,x^k\}\Gammab^j_{kl}n_A^l}.
\end{align*}
Let us now investigate some properties of the maps defined above. As it will appear frequently, we define 
\begin{align}
  \gamma = \frac{\sqrt{g}}{\rho}.
\end{align}
It is useful to note that (cp. Proposition \ref{prop:TrPBST})
\begin{align*}
  \gamma^2 = \sum_{i,j,I,J=1}^m\frac{1}{n!}\gb_{ij}\{x^i,\xv^I\}\gb_{IJ}\{x^j,\xv^J\},
\end{align*}
and to recall the cofactor expansion of the inverse of a matrix:
\begin{lemma}
  Let $g^{ab}$ denote the inverse of $g_{ab}$ and $g=\det(g_{ab})$. Then
  \begin{align}
    gg^{ba} = \frac{1}{(n-1)!}\eps^{aa_1\cdots a_{n-1}}\eps^{bb_1\cdots b_{n-1}}g_{a_1b_1}g_{a_2b_2}\cdots g_{a_{n-1}b_{n-1}}.
  \end{align}
\end{lemma}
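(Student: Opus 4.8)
The plan is to derive the cofactor expansion directly from the definition of the determinant via the Levi-Civita symbol, rather than invoking the classical adjugate formula as a black box. Recall that for an $n\times n$ matrix $g_{ab}$ one has the identity
\begin{align*}
  g\,\eps^{b_1\cdots b_n} = \eps^{a_1\cdots a_n}g_{a_1b_1}g_{a_2b_2}\cdots g_{a_nb_n},
\end{align*}
which is just the statement that the determinant is the unique alternating $n$-linear form on the columns normalized to $1$ on the identity. The idea is to contract this with a second copy of the Levi-Civita symbol and isolate one factor.

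First I would multiply both sides of the above identity by $\eps^{b b_1\cdots b_{n-1}}$ (renaming $b_1\cdots b_n$ appropriately) and sum, using the contraction identity
\begin{align*}
  \eps^{b b_1\cdots b_{n-1}}\eps^{b' b_1\cdots b_{n-1}} = (n-1)!\,\delta^{bb'}.
\end{align*}
This yields $g(n-1)!\,\delta^{bb'}$ on the left-hand side after relabelling. On the right-hand side one obtains $\eps^{b b_1\cdots b_{n-1}}\eps^{a a_1\cdots a_{n-1}}g_{a_1b_1}\cdots g_{a_{n-1}b_{n-1}}\,g_{ab'}$, where I have pulled out the single column index $b'=b_n$ and the corresponding $a=a_n$. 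Next I would contract both sides with $g^{b'c}$ (the inverse of $g$), so that $g_{ab'}g^{b'c} = \delta_a^c$ collapses the right-hand side to $\eps^{bb_1\cdots b_{n-1}}\eps^{ca_1\cdots a_{n-1}}g_{a_1b_1}\cdots g_{a_{n-1}b_{n-1}}$, while the left-hand side becomes $g(n-1)!\,g^{bc}$. Dividing by $(n-1)!$ and matching index names with the statement (the roles of $a$ and $b$, and the symmetry $g^{ba}=g^{ab}$, make the two sides agree) gives exactly the claimed formula.

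I do not anticipate a genuine obstacle here; the only point requiring a little care is bookkeeping of which index is "pulled out" of each Levi-Civita symbol and verifying that the symmetry of $g$ (hence of $g^{-1}$) lets one write the result as $gg^{ba}$ with the index placement in the statement. One should also note that the argument uses invertibility of $g_{ab}$ only in the last step; the penultimate displayed identity (a "$\det\cdot\delta$" form) holds for arbitrary $g_{ab}$ and is itself the substantive content. If one prefers, the whole lemma can alternatively be read off by applying the column-expansion identity for $\det$ to the matrix with one row replaced, but the Levi-Civita contraction route is cleanest and matches the index notation already in use in the paper.
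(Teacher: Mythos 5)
Your proof is correct. Note that the paper itself offers no proof of this lemma: it is explicitly introduced with the words ``recall the cofactor expansion of the inverse of a matrix,'' i.e.\ it is quoted as a standard fact rather than derived. Your derivation --- starting from $g\,\eps^{b_1\cdots b_n}=\eps^{a_1\cdots a_n}g_{a_1b_1}\cdots g_{a_nb_n}$, contracting with a second Levi--Civita symbol via $\eps^{bb_1\cdots b_{n-1}}\eps^{b'b_1\cdots b_{n-1}}=(n-1)!\,\delta^{bb'}$, and then applying $g^{b'c}$ --- is a valid, self-contained verification, and the index bookkeeping works out: moving the distinguished index to the front of each $\eps$ produces the same factor $(-1)^{n-1}$ on both sides, so the signs cancel and the final relabelling $c\to a$ reproduces the stated formula exactly. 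Your remark that invertibility is used only in the final contraction (the preceding ``$\det\cdot\,\delta$'' identity holding for arbitrary $g_{ab}$) is also accurate, and harmless here since the lemma presupposes an invertible $g_{ab}$.
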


\begin{proposition}\label{prop:PBSTproperties}
  For $X\in TM$ it holds that
  \begin{align}
    &\P^2(X) = \gamma^2\gb(X,e_a)g^{ab}e_b\label{eq:P2X}\\
    &\B_A(X) = -\gamma^2\gb(X,\nablab_a N_A)g^{ab}e_b\label{eq:BAX}\\
    &\S_A\T_A(X) = \gamma^2(\det W_A)\gb(X,\nablab_aN_A)h_A^{ab}e_b,
  \end{align}
  and for $Y\in\TSigma$ one obtains
  \begin{align}
    &\P^2(Y) = \gamma^2Y\label{eq:P2Y}\\
    &\B_A(Y) = \gamma^2W_A(Y)\\
    &\S_A\T_A(Y) = -\gamma^2(\det W_A)Y.
  \end{align}
\end{proposition}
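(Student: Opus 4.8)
\noindent The plan is to prove all six identities by expanding the definitions in local coordinates; the contractions over $\gb_{IJ}$ collapse the products of derivatives occurring in $\P$, $\S_A$ and $\T_A$ into products of the scalars $g_{ab}$ and $h_{A,ab}$, after which the cofactor lemma stated just above does the rest. Two observations drive everything. First, that lemma is purely algebraic, so it holds verbatim with $g_{ab}$ replaced by any square matrix; I will apply it both to $g_{ab}$ (obtaining $g\,g^{ab}$) and to the symmetric matrix $h_{A,ab}$ (obtaining the adjugate $\operatorname{adj}(h_A)^{ab}=(\det h_A)\,h_A^{ab}$, where $h_A^{ab}$ is the inverse of $h_{A,ab}$). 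Second, from $e_a=(\d_ax^i)\d_i$ one has $\gb_{ij}(\d_ax^i)(\d_bx^j)=g_{ab}$, and from $h_{A,ab}=-\gb(e_a,\nablab_bN_A)$ (Section~\ref{sec:preliminaries}) one has $\gb_{ij}(\nablab_aN_A)^i(\d_bx^j)=-h_{A,ab}$; taking $(n-1)$-fold products, the $\gb_{IJ}$-contraction of two of the derivative strings $\d_{\av}\xv^I$ and $(\nablab_{\av}\nAv)^I$ becomes $g_{a_1b_1}\cdots g_{a_{n-1}b_{n-1}}$ in the $\d x\cdot\d x$ case and $(-1)^{n-1}h_{A,a_1b_1}\cdots h_{A,a_{n-1}b_{n-1}}$ in the mixed case.

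Substituting these into $(\P^2)^{ik}=\P^{iI}\P^{kJ}\gb_{IJ}$, $(\B_A)^{ik}=\P^{iI}(\T_A)^{Jk}\gb_{IJ}$ and $(\S_A\T_A)^{ik}=(\S_A)^{iI}(\T_A)^{Jk}\gb_{IJ}$, each right-hand side becomes $\tfrac1{\rho^2}(\d_ax^i)(\cdots)^k$ times $\tfrac1{(n-1)!}\eps^{aa_1\cdots a_{n-1}}\eps^{bb_1\cdots b_{n-1}}$ contracted against a product of $g$'s, respectively of $h$'s, where the last $\eps$ arises from $\T_A$ via $\eps^{b_1\cdots b_{n-1}b}=(-1)^{n-1}\eps^{bb_1\cdots b_{n-1}}$. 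Applying the cofactor lemma, using $\det h_A=\det(g)\det(W_A)=g\det W_A$ and $\gamma^2=g/\rho^2$, and tracking the signs (the explicit $(-1)^n$'s in $\S_A$ and $\T_A$ combine with the $(-1)^{n-1}$'s above to give an overall $-1$ for $\B_A$ and $+1$ for $\S_A\T_A$), one obtains the closed forms $(\P^2)^{ik}=\gamma^2(\d_ax^i)(\d_bx^k)g^{ab}$, $(\B_A)^{ik}=-\gamma^2(\d_ax^i)(\nablab_bN_A)^kg^{ab}$ and $(\S_A\T_A)^{ik}=\gamma^2(\det W_A)(\d_ax^i)(\nablab_bN_A)^kh_A^{ab}$.

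The rest is bookkeeping. Lowering the second index with $\gb$ and pairing with $X\in TM$, the identities $(\d_bx^k)\gb_{kl}X^l=\gb(e_b,X)$, $(\nablab_bN_A)^k\gb_{kl}X^l=\gb(\nablab_bN_A,X)$ and $(\d_ax^i)\d_i=e_a$ reproduce (\ref{eq:P2X}), (\ref{eq:BAX}) and the third ambient formula, after relabelling $a\leftrightarrow b$ and using symmetry of $g^{ab}$ and $h_A^{ab}$. For $Y=Y^ce_c\in\TSigma$ one further substitutes $\gb(e_c,e_a)=g_{ca}$ and $\gb(e_c,\nablab_bN_A)=-h_{A,cb}$; then $g^{ab}g_{bc}=\delta^a_c$ gives $\P^2(Y)=\gamma^2Y$, the relation $g^{ab}h_{A,bc}=(W_A)^a_c$ gives $\B_A(Y)=\gamma^2W_A(Y)$, and $h_A^{ab}h_{A,bc}=\delta^a_c$ gives $\S_A\T_A(Y)=-\gamma^2(\det W_A)Y$.

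I expect the only genuine obstacle to be the sign and index accounting in the middle step: reconciling the $(-1)^n$ factors built into the definitions of $\S_A$ and $\T_A$ with the sign $(-1)^{n-1}$ from permuting the Levi-Civita symbol and with the sign in $\gb(e_a,\nablab_bN_A)=-h_{A,ab}$, so that $\B_A$ comes out with $W_A$ (not $-W_A$) and $\S_A\T_A$ comes out with a minus sign on $\TSigma$. One should also be mindful that $h_A^{ab}$ in the third ambient identity means the inverse of the bilinear form $h_{A,ab}$ rather than its metric-raised version; equivalently $(\det W_A)h_A^{ab}=\tfrac1g\operatorname{adj}(h_A)^{ab}$ is a polynomial in the $h_{A,ab}$, so the formula for general $X$ makes sense even where $W_A$ is singular. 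Beyond these points the computation is entirely routine.
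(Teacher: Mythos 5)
Your proposal is correct and follows essentially the same route as the paper: expand the definitions in local coordinates, collapse the $\gb_{IJ}$-contractions of the derivative strings into products of $g_{ab}$ (resp.\ $h_{A,ab}$ via $h_{A,ab}=-\gb(e_a,\nablab_bN_A)$), and invoke the cofactor lemma. The paper only writes out the $\P^2$ case and declares the rest ``analogous,'' whereas you supply the missing details --- the sign bookkeeping from the $(-1)^n$ factors and the permutation $\eps^{\av a}=(-1)^{n-1}\eps^{a\av}$, the application of the cofactor expansion to $h_{A,ab}$ with $\det h_A=g\det W_A$, and the remark that $(\det W_A)h_A^{ab}$ is polynomial in $h_{A,ab}$ --- all of which check out.
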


\begin{proof}
  Let us provide a proof for equations (\ref{eq:P2X}) and (\ref{eq:P2Y}); the other
  formulas can be proved analogously.
  \begin{align*}
    \P^2(X) &= \P^{iI}\P^{jJ}\gb_{IJ}\gb_{jk}X^k\d_i = 
    \frac{\eps^{a\av}\eps^{c\cv}}{\rho^2(n-1)!}\paraa{\d_ax^i}\paraa{\d_{\av}x^I}\paraa{\d_cx^j}\paraa{\d_{\cv}x^J}\gb_{IJ}\gb_{jk}X^k\d_i\\
    &= \frac{\eps^{a\av}\eps^{c\cv}}{\rho^2(n-1)!}g_{a_1c_1}\cdots g_{a_{n-1}c_{n-1}}\paraa{\d_ax^i}\paraa{\d_cx^j}\gb_{jk}X^k\d_i\\
    &= \gamma^2g^{ac}\paraa{\d_ax^i}\paraa{\d_cx^j}\gb_{jk}X^k\d_i
    = \gamma^2\gb(X,e_c)g^{ca}e_a.
  \end{align*}
  Choosing a tangent vector $Y=Y^ce_c$ gives immediately that $\P^2(Y)=\gamma^2Y$.
\end{proof}

\noindent For a map $\B:TM\to TM$ we denote the trace by $\Tr\B\equiv
\B^i_i$ and for a map $W:\TSigma\to\TSigma$ we denote the trace by $\tr W\equiv W^a_a$.

\begin{proposition}\label{prop:TrPBST}
  It holds that
  \begin{align}
    \frac{1}{n}\Tr\P^2 &= \gamma^2\\
    \Tr\B_A &= \gamma^2\tr W_A\\
    \frac{1}{n}\Tr\S_A\T_A &= -\gamma^2(\det W_A).
  \end{align}
\end{proposition}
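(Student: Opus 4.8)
The plan is to deduce all three trace formulas directly from Proposition \ref{prop:PBSTproperties}, exploiting the fact that each of the maps $\P^2$, $\B_A$ and $\S_A\T_A$ acts in a very simple way on the orthogonal decomposition $TM = \TSigma \oplus \TSigma^\perp$. First I would observe that, since $\Tr$ of a linear map $TM\to TM$ can be computed in any orthonormal frame, and since $\{e_1,\dots,e_n\}$ spans $\TSigma$ while $\{N_1,\dots,N_p\}$ spans $\TSigma^\perp$, it suffices to understand the action on tangent vectors and on normal vectors separately and add the contributions.

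For the tangent part, equations (\ref{eq:P2Y}), the second displayed formula for $\B_A(Y)$, and the third for $\S_A\T_A(Y)$ tell us that on $\TSigma$ the maps act as $\gamma^2\,\mathrm{id}$, $\gamma^2 W_A$ and $-\gamma^2(\det W_A)\,\mathrm{id}$ respectively; hence the tangential contribution to the traces is $n\gamma^2$, $\gamma^2\tr W_A$, and $-n\gamma^2(\det W_A)$. The key remaining step is to check that the normal part contributes nothing: one must verify that $\P^2(N_B)=0$, $\B_A(N_B)=0$ and $\S_A\T_A(N_B)=0$ for every $B$. This follows from the $X\in TM$ formulas in Proposition \ref{prop:PBSTproperties}: each of the right-hand sides is built from $\gb(N_B,e_a)$ or $\gb(N_B,\nablab_aN_A)$. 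The former vanishes because $N_B\perp\TSigma$. For the latter, one uses that $\gb(N_B,\nablab_aN_A) = \gb(\nablab_a N_B, N_A) $ is the connection coefficient $D_{e_a}$ in the normal bundle (antisymmetric in $A,B$, and in particular the diagonal term is $\tfrac12\d_a\gb(N_A,N_A)=0$); but more to the point, in the formulas for $\B_A$ and $\S_A\T_A$ the relevant contraction is $\gb(N_B,\nablab_aN_A)g^{ab}e_b$ and one needs the coefficient to vanish — here the cleaner route is to note $W_A(N_B)=0$ and $\nablab_aN_A$ has tangential part $-W_A(e_a)$, so $\gb(N_B,\nablab_aN_A)=\gb(N_B,D_{e_a}N_A)$, and then invoke that $\B_A(X)$ and $\S_A\T_A(X)$ as written only see the tangential projection of $X$ through the structure of the right-hand side. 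I would phrase this last point carefully: since $\B_A(X)$ depends on $X$ only via $\gb(X,\nablab_aN_A)$ and $\nablab_{a}N_A = -W_A(e_a) + D_{e_a}N_A$, while the answer lies in $\TSigma$, one checks on the normal vectors that the output is orthogonal to all $e_b$, forcing $\B_A(N_B)=0$; similarly for $\S_A\T_A$.

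Assembling: $\Tr\P^2 = n\gamma^2 + 0$, $\Tr\B_A = \gamma^2\tr W_A + 0$, and $\Tr\S_A\T_A = -n\gamma^2(\det W_A) + 0$, which are exactly the three claimed identities after dividing by $n$ in the first and third. The only genuine obstacle is the bookkeeping in showing the normal vectors are annihilated; once the formulas of Proposition \ref{prop:PBSTproperties} are granted, this is essentially the observation that $N_B$ is orthogonal to $\TSigma$ together with the fact that $\nablab_{a}N_A$ pairs with $N_B$ only through its normal component, and the final contracted expression still lands in $\TSigma$, so the trace only picks up the tangential block. I would also remark that the identity $\gamma^2 = \tfrac{1}{n!}\gb_{ij}\{x^i,\xv^I\}\gb_{IJ}\{x^j,\xv^J\}$ quoted before the lemma is an immediate corollary of the first trace formula, since that sum is exactly $\tfrac1n\Tr\P^2$ written out in coordinates.
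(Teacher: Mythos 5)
Your overall strategy --- split $\Tr$ over $TM=\TSigma\oplus\TSigma^\perp$, read off the tangential block from Proposition \ref{prop:PBSTproperties}, and argue the normal block contributes nothing --- is the natural one (the paper states the proposition without proof, as an immediate corollary of Proposition \ref{prop:PBSTproperties}), and your final numbers are right. But the intermediate claim you lean on, namely that $\B_A(N_B)=0$ and $\S_A\T_A(N_B)=0$ for every $B$, is false when the codimension is at least $2$, and the argument you sketch for it does not work. From Proposition \ref{prop:PBSTproperties}, $\B_A(N_B)=-\gamma^2\gb(N_B,\nablab_aN_A)g^{ab}e_b$, and $\gb(N_B,\nablab_aN_A)=\gb(N_B,D_{e_a}N_A)=(D_{e_a})_{AB}$ is a generically nonzero connection coefficient of the normal bundle for $B\neq A$ (only the diagonal term $A=B$ vanishes). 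Indeed the paper itself proves later that $\gb\paraa{\B_B(N_A),X}=\gamma^2(D_X)_{AB}$, which is exactly the statement that $\B_B(N_A)$ is a nonzero tangent vector in general. Your sentence ``one checks on the normal vectors that the output is orthogonal to all $e_b$, forcing $\B_A(N_B)=0$'' is wrong: the output is a linear combination of the $e_b$, so it is orthogonal to all the $N_C$, not to the $e_b$. (There is also a sign slip: $\gb(N_B,\nablab_aN_A)=-\gb(\nablab_aN_B,N_A)$.)

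The repair is the remark you make only in passing at the end, and it should be the whole argument for the normal block: all three maps take values in $\TSigma$, so writing the trace in the basis $\{e_a,N_B\}$ with dual basis $\{g^{ab}e_b,N_B\}$, the normal contribution is $\sum_B\gb\paraa{L(N_B),N_B}=0$ simply because $L(N_B)\in\TSigma\perp N_B$ --- no vanishing of $L(N_B)$ itself is needed or true. With that substitution the proof is correct: the tangential block gives $n\gamma^2$, $\gamma^2\tr W_A$ and $-n\gamma^2\det W_A$ as you say. Equivalently, and perhaps more in the spirit of the paper, one can skip the decomposition entirely and contract the index formulas of Proposition \ref{prop:PBSTproperties} directly, e.g. $\Tr\P^2=\gamma^2 g^{ab}\gb(e_a,e_b)=\gamma^2 g^{ab}g_{ab}=n\gamma^2$ and $\Tr\S_A\T_A=\gamma^2(\det W_A)h_A^{ab}\gb(e_b,\nablab_aN_A)=-\gamma^2(\det W_A)h_A^{ab}h_{A,ab}=-n\gamma^2\det W_A$.
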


\begin{remark}
  For a hypersurface (with normal $N=n^i\d_i$) in $\reals^{n+1}$,
  \begin{align}
    \det W &= (-1)^n\frac{\{x^{i_1},\ldots,x^{i_n}\}\{n_{i_1},\ldots,n_{i_n}\}}
    {\{x^{k_1},\ldots,x^{k_n}\}\{x_{k_1},\ldots,x_{k_n}\}}\\
    &=\frac{1}{\gamma n!}\eps^{i_1\cdots i_n i}\{n_{i_1},\ldots,n_{i_n}\}n_i,\notag
  \end{align}
  the signed ratio of infinitesimal volumes swept out on $S^n$ (by
  $N$), resp $\Sigma$ (which can easily be obtained directly by simply
  writing out the determinant of the second fundamental form,
  $h=\det(-\d_ax^i\d_bn_i)$); in fact, all the symmetric functions of
  the principal curvatures are related to ratios of products of two
  Nambu brackets (cp. the paragraph after Proposition
  \ref{prop:HyperSurfaceGaussian}). Namely, the $k$'th symmetric
  curvature is given by
  \begin{align}
    (-1)^k\frac{\{x^{i_1},\ldots,x^{i_n}\}
      \{n_{i_{1}},\ldots,n_{i_k},x_{i_{k+1}},\ldots,x_{i_n}\}}
    {\{x^{k_1},\ldots,x^{k_n}\}\{x_{k_1},\ldots,x_{k_n}\}}.
  \end{align}
\end{remark}

\noindent A direct consequence of Propositions
\ref{prop:PBSTproperties} and \ref{prop:TrPBST} is that one can write the projection onto
$\TSigma$, as well as the mean curvature vector,  in terms of Nambu brackets.
\begin{proposition}
  The map 
  \begin{align}
    \gamma^{-2}\P^2=\frac{n}{\Tr\P^2}\P^2:TM\to\TSigma
  \end{align}
  is the orthogonal projection of $TM$ onto $\TSigma$. Furthermore,
  the mean curvature vector can be written as
\begin{align*}
  H = \frac{1}{\Tr\P^2}\sum_{A=1}^p\paraa{\Tr\B_A}N_A.
\end{align*}
\end{proposition}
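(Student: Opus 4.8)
The plan is to obtain both statements directly from Propositions~\ref{prop:PBSTproperties} and~\ref{prop:TrPBST}, since the geometric content is already contained there. First I would record that the identity $\frac{1}{n}\Tr\P^2 = \gamma^2$ gives $\frac{n}{\Tr\P^2} = \gamma^{-2}$, so the two maps written in the statement are literally equal; it therefore suffices to show that $P := \gamma^{-2}\P^2$ is the orthogonal projection of $TM$ onto $\TSigma$.

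To identify $P$ with the orthogonal projection I would check its three characterizing properties. From (\ref{eq:P2X}) we have $P(X) = \gb(X,e_a)g^{ab}e_b$ for every $X\in TM$, which visibly lies in $\TSigma$, so $P$ maps into $\TSigma$. From (\ref{eq:P2Y}) we have $P(Y) = Y$ for $Y\in\TSigma$, so $P$ restricts to the identity on $\TSigma$; combined with the previous point this gives idempotency $P\circ P = P$. Finally, if $X\in\TSigma^\perp$ then $\gb(X,e_a)=0$ for all $a$ and hence $P(X)=0$ by (\ref{eq:P2X}); conversely $\ker P$ can contain no nonzero tangential vector because $P$ is the identity on $\TSigma$, so using $TM = \TSigma\oplus\TSigma^\perp$ we conclude $\ker P = \TSigma^\perp$. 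A projection with image $\TSigma$ and kernel $\TSigma^\perp$ is the orthogonal projection onto $\TSigma$. (Alternatively one could verify that $P$ is self-adjoint with respect to $\gb$ using the symmetry of $g^{ab}$, but the kernel/image argument seems cleaner.)

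For the mean curvature vector I would start from the definition $H = \frac{1}{n}\sum_{A=1}^p(\tr W_A)N_A$ recalled in Section~\ref{sec:preliminaries}, and substitute the second identity of Proposition~\ref{prop:TrPBST} in the form $\tr W_A = \gamma^{-2}\Tr\B_A = \frac{n}{\Tr\P^2}\Tr\B_A$, which gives
\begin{align*}
  H = \frac1n\sum_{A=1}^p\frac{n}{\Tr\P^2}\paraa{\Tr\B_A}N_A = \frac{1}{\Tr\P^2}\sum_{A=1}^p\paraa{\Tr\B_A}N_A,
\end{align*}
the claimed formula. No step is a genuine obstacle — the work was done in the two preceding propositions — and the only point deserving a moment's care is confirming that $\ker P$ equals $\TSigma^\perp$ and not merely contains it, which follows at once from the splitting $TM = \TSigma\oplus\TSigma^\perp$ together with the fact that $P$ acts as the identity on $\TSigma$.
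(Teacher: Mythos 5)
Your proposal is correct and follows exactly the route the paper intends: the paper states this proposition without proof as ``a direct consequence of Propositions \ref{prop:PBSTproperties} and \ref{prop:TrPBST}'', and your argument simply fills in those details (using (\ref{eq:P2X})--(\ref{eq:P2Y}) to identify the projection and $\Tr\B_A=\gamma^2\tr W_A$ together with $\frac{1}{n}\Tr\P^2=\gamma^2$ for the mean curvature formula). The kernel/image verification is a clean and complete way to pin down that the map is the orthogonal projection.
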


\noindent Proposition \ref{prop:PBSTproperties} tells us that $\gamma^{-2}\B_A$
equals the Weingarten map $W_A$, when restricted to $\TSigma$. What is
the geometrical meaning of $\B_A$ acting on a normal vector? It turns
out that the maps $\B_A$ also provide information about the covariant
derivative in the normal space. If one defines $(D_X)_{AB}$ through
\begin{align*}
  D_XN_A = \sum_{B=1}^p(D_X)_{AB}N_B
\end{align*}
for $X\in\TSigma$, then one can prove the following relation to
the maps $\B_A$.
\begin{proposition}
  For $X\in\TSigma$ it holds that
  \begin{align}
    \gb\paraa{\B_B(N_A),X}=\gamma^2\paraa{D_X}_{AB}.
  \end{align}
\end{proposition}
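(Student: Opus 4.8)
The plan is to reduce everything to equation~(\ref{eq:BAX}) of Proposition~\ref{prop:PBSTproperties}, which holds for \emph{any} $X\in TM$ and so in particular may be applied to the normal vector $N_A$. Applying that formula to the map $\B_B$ gives $\B_B(N_A)=-\gamma^2\gb(N_A,\nablab_aN_B)g^{ab}e_b$. I would then pair this with a tangent vector $X=X^ce_c\in\TSigma$, using $\gb(e_b,e_c)=g_{bc}$ together with $g^{ab}g_{bc}=\delta^a_c$, to obtain
\[
  \gb\paraa{\B_B(N_A),X}=-\gamma^2 X^a\,\gb\paraa{N_A,\nablab_aN_B}=-\gamma^2\,\gb\paraa{N_A,\nablab_XN_B}.
\]

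Next I would invoke Weingarten's formula~(\ref{eq:WeingartenFormula}), $\nablab_XN_B=-W_B(X)+D_XN_B$. Since $W_B(X)\in\TSigma$ while $N_A\in\TSigma^\perp$, the tangential part drops out of the inner product, so $\gb\paraa{N_A,\nablab_XN_B}=\gb\paraa{N_A,D_XN_B}$. Expanding $D_XN_B=\sum_C(D_X)_{BC}N_C$ and using $\gb(N_A,N_C)=\delta_{AC}$ gives $\gb\paraa{N_A,\nablab_XN_B}=(D_X)_{BA}$.

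To finish, I would use that the normal connection $D$ is metric-compatible: differentiating $\gb(N_A,N_B)=\delta_{AB}$ along $X$ and splitting both covariant derivatives via~(\ref{eq:WeingartenFormula}) yields $0=(D_X)_{AB}+(D_X)_{BA}$, i.e. the coefficients $(D_X)_{AB}$ are antisymmetric in $A,B$. Hence $\gb\paraa{N_A,\nablab_XN_B}=(D_X)_{BA}=-(D_X)_{AB}$, and combining with the first display gives $\gb\paraa{\B_B(N_A),X}=\gamma^2(D_X)_{AB}$, as claimed.

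I do not expect a serious obstacle: once~(\ref{eq:BAX}) is granted the argument is just a few lines. The one place that needs care is the bookkeeping of the normal indices — the naive substitution produces $(D_X)_{BA}$, and one must appeal to the antisymmetry of the normal connection coefficients to rewrite it as $-(D_X)_{AB}$, which is exactly what makes the final sign of $\gamma^2$ come out positive.
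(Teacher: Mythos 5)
Your proposal is correct and follows essentially the same route as the paper's proof: apply equation (\ref{eq:BAX}) with $X=N_A$, contract with a tangent vector to get $-\gamma^2\gb(N_A,\nablab_XN_B)=-\gamma^2(D_X)_{BA}$, and flip the sign using the antisymmetry of $(D_X)_{AB}$ coming from metric compatibility of the normal connection. The sign bookkeeping you flag is exactly the point the paper also singles out in its final remark.
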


\begin{proof}
  For a vector $X=X^ae_a$, it follows from Weingarten's formula (\ref{eq:WeingartenFormula}) that
  \begin{align*}
    (D_X)_{AB} = \gb\paraa{\nablab_X N_A,N_B}. 
  \end{align*}
  On the other hand, with the formula from Proposition
  \ref{prop:PBSTproperties}, one computes
  \begin{align*}
     \gb\paraa{\B_B(N_A),X} &= -\gamma^2\gb\paraa{N_A,\nablab_aN_B}g^{ab}g_{bc}X^c
     = -\gamma^2\gb\paraa{N_A,\nablab_XN_B}\\
     &= -\gamma^2(D_X)_{BA}=\gamma^2(D_X)_{AB}.
  \end{align*}
  The last equality is due to the fact that $D$ is a covariant
  derivative, which implies that
  $0=D_X\gb(N_A,N_B)=\gb(D_XN_A,N_B)+\gb(N_A,D_XN_B)$.
\end{proof}

\noindent Thus, one can write Weingarten's formula as
\begin{align}
  \gamma^2\nablab_XN_A = -\B_A(X)+\sum_{B=1}^p\gb\paraa{\B_B(N_A),X}N_B,
\end{align}
and since $h_A(X,Y) = \gamma^{-2}\gb(\B_A(X),Y)$ Gauss' formula becomes
\begin{align}\label{eq:GaussformulaB}
  \nablab_XY = \nabla_XY+\frac{1}{\gamma^2}\sum_{A=1}^p\gb\paraa{\B_A(X),Y}N_A.
\end{align}
Let us now turn our attention to the curvature of $\Sigma$. Since
Nambu brackets involve sums over all vectors in the basis of
$\TSigma$, one can not expect to find expressions for quantities that
involve a choice of tangent plane, e.g. the sectional curvature
(unless $\Sigma$ is a surface). However, it turns out that one can
write the Ricci curvature as an expression involving Nambu brackets.
\begin{theorem}\label{thm:ricciCurvature}
  Let $\R$ be the Ricci curvature of $\Sigma$, considered as a map
  $\TSigma\to\TSigma$, and let $R$ denote the scalar curvature. For
  any $X\in\TSigma$ it holds that
  \begin{align}
    &\R(X) = \frac{1}{\gamma^4}\paraa{\P^2}^{ik}\paraa{\P^2}^{lm}\Rb_{ijkl}X^j\d_m
    +\frac{1}{\gamma^4}\sum_{A=1}^p\bracketb{(\Tr\B_A)\B_A(X)-\B_A^2(X)}\\
    &R = \frac{1}{\gamma^4}\paraa{\P^2}^{ik}\paraa{\P^2}^{jl}\Rb_{ijkl}
    +\frac{1}{\gamma^4}\sum_{A=1}^p\bracketb{(\Tr\B_A)^2-\Tr\B_A^2(X)},
  \end{align}
  where $\Rb$ is the curvature tensor of $M$.
\end{theorem}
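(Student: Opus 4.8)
The plan is to start from Gauss' equation in the form already recorded in the Preliminaries, namely the expression
\begin{align*}
  \R^p_b = g^{pd}g^{ac}\gb\paraa{\Rb(e_c,e_d)e_b,e_a}
  +\sum_{A=1}^p\bracketb{(W_A)^a_a(W_A)_b^p-(W_A^2)_b^p},
\end{align*}
and then translate each of its two pieces into the Nambu-bracket objects using Proposition~\ref{prop:PBSTproperties}. First I would rewrite the intrinsic-curvature term. Apply the map $\R$ to a tangent vector $X=X^be_b$; the second-fundamental-form part of the sum becomes $\sum_A\big[(\tr W_A)W_A(X)-W_A^2(X)\big]$, and by Proposition~\ref{prop:PBSTproperties} we have $W_A(Y)=\gamma^{-2}\B_A(Y)$ for $Y\in\TSigma$ and, by Proposition~\ref{prop:TrPBST}, $\tr W_A=\gamma^{-2}\Tr\B_A$. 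Iterating the first identity gives $W_A^2(X)=\gamma^{-4}\B_A^2(X)$ (here I must be slightly careful: $\B_A$ maps $TM\to TM$, but $\B_A(X)\in\TSigma$ for $X\in\TSigma$, so the composition restricted to $\TSigma$ really does reproduce $W_A^2$ up to the factor $\gamma^{-4}$). Collecting the powers of $\gamma$ yields the $\frac{1}{\gamma^4}\sum_A\big[(\Tr\B_A)\B_A(X)-\B_A^2(X)\big]$ term claimed in the theorem.

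Next I would handle the ambient-curvature term $g^{pd}g^{ac}\gb(\Rb(e_c,e_d)e_b,e_a)$. The idea is that each factor $g^{ac}e_a\otimes e_c$-type contraction is exactly what $\gamma^{-2}\P^2$ produces: by \eqref{eq:P2X}, $\P^2(X)=\gamma^2\gb(X,e_a)g^{ab}e_b$, so $\paraa{\P^2}^{ik}$, viewed with both indices up via $\gb$, equals $\gamma^2$ times the tangential projector written in ambient indices, i.e.\ $\paraa{\P^2}^{ik}=\gamma^2\,(\d_ax^i)(\d_bx^k)g^{ab}$. Substituting two such factors into $\gb(\Rb(e_c,e_d)e_b,e_a)=\Rb_{ijkl}(\d_c x^i)(\d_d x^k)(\text{\dots})$ and matching indices, the four $\d x$'s pair up with the two $g^{ab}$'s in $\P^2$ and the two $g^{cd}$'s in Gauss' equation; the net effect is to replace $g^{pd}g^{ac}\gb(\Rb(e_c,e_d)e_b,e_a)X^b$ by $\gamma^{-4}\paraa{\P^2}^{ik}\paraa{\P^2}^{lm}\Rb_{ijkl}X^j\d_m$. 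This is essentially a bookkeeping computation in index gymnastics, using only the antisymmetry of $\Rb$ in its first and last pair to see that only the tangential projection of each slot survives — which is precisely why replacing $e_a$ by $\gamma^{-2}\P^2$ acting in that slot is legitimate.

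For the scalar curvature $R$ I would simply take the trace of the map identity for $\R$: $R=\R^b_b$. The second sum becomes $\gamma^{-4}\sum_A\big[(\Tr\B_A)^2-\Tr\B_A^2\big]$ directly, and the first term becomes $\gamma^{-4}\paraa{\P^2}^{ik}\paraa{\P^2}^{jl}\Rb_{ijkl}$ after contracting $m$ with $j$ (lowering with $\gb$), using once more that inserting $\P^2$ in a slot only keeps the tangential part and that $\Rb$ is antisymmetric in the relevant index pairs so the ``extra'' normal contributions vanish. The main obstacle, such as it is, will be the careful index matching in the ambient-curvature term: one has to be sure that the tangential projectors $\P^2$ can be inserted in two of the four slots of $\Rb_{ijkl}$ without changing the value, which relies on $\Rb(X,Y)Z$ being tangential-slot-insensitive in exactly the way the symmetries of the Riemann tensor allow, and on keeping track of the placement of $\gb$'s used to raise and lower indices between the $TM$ and $\TSigma$ pictures. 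Everything else is a direct substitution of the three formulas from Proposition~\ref{prop:PBSTproperties} together with Proposition~\ref{prop:TrPBST}.
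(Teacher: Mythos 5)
Your proposal is correct and follows essentially the same route as the paper: start from the Gauss-equation form of the Ricci tensor recorded in the Preliminaries, convert the Weingarten terms via $\B_A|_{\TSigma}=\gamma^2 W_A$ and $\Tr\B_A=\gamma^2\tr W_A$, and rewrite the ambient-curvature term using the identity $(\P^2)^{ik}=\gamma^2 g^{ac}(\d_ax^i)(\d_cx^k)$ (which the paper verifies inline via the cofactor expansion of $g^{ac}$ rather than citing it). The only cosmetic difference is that your appeal to the symmetries of $\Rb$ is unnecessary — the ambient-curvature term is a direct index translation, not an insertion of projectors that needs justifying.
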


\begin{proof}
  The Ricci curvature of $\Sigma$ is defined as
  \begin{align*}
    \R^p_b = g^{ac}g^{pd}g\paraa{R(e_c,e_d)e_b,e_a}
  \end{align*}
  and from Gauss' equation (\ref{eq:GaussEquation}) it follows that
  \begin{align*}
    \R^p_b = g^{pd}g^{ac}\gb\paraa{\Rb(e_c,e_d)e_b,e_a}
    + g^{ac}g^{pd}\sum_{A=1}^p\parab{h_{A,bd}h_{A,ac}-h_{A,bc}h_{A,ad}}.
  \end{align*}
  Since $(W_A)^a_b = g^{ac}h_{A,cb}$ one obtains
  \begin{align*}
    \R_b^p = g^{ac}g^{pd}\gb\paraa{\Rb(e_c,e_d)e_b,e_a}
    + \sum_{A=1}^p\bracketb{\paraa{\tr W_A}(W_A)^p_b-(W_A^2)_b^p},
  \end{align*}
  and as $\B_A(X)=\gamma^2 W_A(X)$ for any $X\in\TSigma$, and
  $\Tr\B_A=\gamma^2\tr W_A$, one has
  \begin{equation*}
    \R(X) = g^{ac}g^{pd}\gb\paraa{\Rb(e_c,e_d)e_b,e_a}X^be_p
    + \frac{1}{\gamma^4}\sum_{A=1}^p\bracketb{\paraa{\Tr \B_A}\B_A(X)-\B_A^2(X)}.
  \end{equation*}
  By expanding the first term as
  \begin{align*}
    g^{ac}&g^{pd}X^b\Rb_{ijkl}\paraa{\d_ax^i}\paraa{\d_bx^j}\paraa{\d_cx^k}\paraa{\d_dx^l}\paraa{\d_px^m}\d_m\\
    &=\frac{1}{g^2(n-1)!^2}\eps^{p\pv}\eps^{d\dv}g_{\pv\dv}\,\eps^{a\av}\eps^{c\cv}g_{\av\cv}
    X^b\Rb_{ijkl}\paraa{\d_ax^i}\paraa{\d_bx^j}\paraa{\d_cx^k}\paraa{\d_dx^l}\paraa{\d_px^m}\d_m\\
    &=\ldots=\frac{1}{\gamma^4}\paraa{\P^2}^{ik}\paraa{\P^2}^{lm}\Rb_{ijkl}X^j\d_m
  \end{align*}
  one obtains the desired result.
\end{proof}

\subsection{Construction of normal vectors}\label{sec:normalVectors}

\noindent The results in Section \ref{sec:nambuPoissonFormulation}
involve Nambu brackets of the embedding coordinates and the
components of the normal vectors. In this section we will prove that
one can replace sums over normal vectors by sums of Nambu
brackets of the embedding coordinates, thus providing expressions that
do not involve normal vectors.

It will be convenient to introduce yet another multi-index; namely, we
let $\alpha=i_1\ldots i_{p-1}$ consist of $p-1$ indices all taking
values between $1$ and $m$.

\begin{proposition}\label{prop:normalvectors}
  For any value of the multi-index $\alpha$, the vector
  \begin{align}\label{eq:Zdef}
    Z_{\alpha}=\frac{1}{\gamma\paraa{n!\sqrt{(p-1)!}}}\gb^{ij}\eps_{jk_1\cdots k_n\alpha}\{x^{k_1},\ldots,x^{k_n}\}\d_i,
  \end{align}
  where $\eps_{i_1\cdots i_m}$ is the Levi-Civita tensor of $M$, is
  normal to $\TSigma$, i.e. $\gb(Z_{\alpha},e_a)=0$ for
  $a=1,2,\ldots,n$. For hypersurfaces ($p=1$), equation (\ref{eq:Zdef})
  defines a unique normal vector of unit length.
\end{proposition}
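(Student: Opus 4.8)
The plan is to verify directly that the proposed vector $Z_\alpha$ is orthogonal to each basis vector $e_a = (\d_a x^l)\d_l$ of $\TSigma$, and then to treat the hypersurface case separately by a normalization computation.

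For the orthogonality claim, I would compute
\begin{align*}
  \gb(Z_\alpha, e_a) \propto \gb^{ij}\gb_{il}\eps_{jk_1\cdots k_n\alpha}\{x^{k_1},\ldots,x^{k_n}\}(\d_a x^l)
  = \eps_{jk_1\cdots k_n\alpha}\{x^{k_1},\ldots,x^{k_n}\}(\d_a x^j).
\end{align*}
The key step is to recognize that, up to the density factor $1/\rho$, the product $\{x^{k_1},\ldots,x^{k_n}\}(\d_a x^j)$ is antisymmetric in the $n+1$ upper indices $j,k_1,\ldots,k_n$ coming from the definition of the Nambu bracket: indeed $\{x^{k_1},\ldots,x^{k_n}\} = \frac{1}{\rho}\eps^{b_1\cdots b_n}(\d_{b_1}x^{k_1})\cdots(\d_{b_n}x^{k_n})$, so
\begin{align*}
  (\d_a x^j)\{x^{k_1},\ldots,x^{k_n}\} = \frac{1}{\rho}\eps^{b_1\cdots b_n}(\d_a x^j)(\d_{b_1}x^{k_1})\cdots(\d_{b_n}x^{k_n}),
\end{align*}
and since the index $a$ together with $b_1,\ldots,b_n$ ranges over only $n$ values, the corresponding object is the fully antisymmetrized product of $n+1$ tangent-vector components in dimension $n$. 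In other words, the totally antisymmetric part of $(\d_a x^{j})(\d_{b_1}x^{k_1})\cdots(\d_{b_n}x^{k_n})$ in the $n+1$ superscripts vanishes because it is a determinant with $n+1$ rows living in an $n$-dimensional space. Contracting this with $\eps_{jk_1\cdots k_n\alpha}$, which is totally antisymmetric in $j,k_1,\ldots,k_n$, therefore gives zero. This establishes $\gb(Z_\alpha,e_a)=0$ for all $a$, i.e.\ $Z_\alpha \in \TSigma^\perp$.

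For the hypersurface case $p=1$, the multi-index $\alpha$ is empty, and $Z = \frac{1}{\gamma\, n!}\gb^{ij}\eps_{jk_1\cdots k_n}\{x^{k_1},\ldots,x^{k_n}\}\d_i$. Since $\TSigma^\perp$ is one-dimensional and $Z$ lies in it, $Z = c\, N$ for the chosen unit normal $N=n^i\d_i$ and some scalar $c$; it remains to show $c^2=1$, i.e.\ $\gb(Z,Z)=1$. I would compute $\gb(Z,N) = \frac{1}{\gamma\, n!}\eps_{jk_1\cdots k_n}\{x^{k_1},\ldots,x^{k_n}\}n^j$ and recognize the right-hand side: writing out the Nambu bracket and using that $\{n^j, \d_{b_1}x^{k_1},\ldots\}$-type expressions relate to the volume form, $\eps_{jk_1\cdots k_n}n^j(\d_{b_1}x^{k_1})\cdots(\d_{b_n}x^{k_n})$ is (up to sign) $\sqrt{g}\,\eps_{b_1\cdots b_n}$ times $\rho\gamma = \sqrt g$ appropriately — more precisely, the $(n+1)\times(n+1)$ matrix with rows $n^i, \d_{b_1}x^i,\ldots,\d_{b_n}x^i$ has determinant equal (up to orientation sign) to $\sqrt{\det \gb}$ times $\sqrt{g}$ when contracted correctly, so that after dividing by $\gamma\, n!$ one gets $\gb(Z,N)=\pm 1$. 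Then $\gb(Z,Z) = c^2 = (\gb(Z,N))^2 = 1$, so $Z$ has unit length; uniqueness up to sign is automatic in the one-dimensional normal space, and the formula pins down a definite sign via the orientation, giving the unique unit normal.

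The main obstacle I anticipate is the bookkeeping in the hypersurface normalization: correctly tracking the factor $\gamma = \sqrt g/\rho$, the $n!$, the metric determinant $\det\gb$ hidden in the Levi-Civita \emph{tensor} $\eps_{i_1\cdots i_m}$ (as opposed to the symbol), and the orientation signs, so that they combine to exactly $1$. The orthogonality part, by contrast, is a clean "determinant with too many rows" argument and should be routine once the antisymmetrization is set up. It may be cleanest to prove $\gb(Z_\alpha,e_a)=0$ first in full generality and then specialize, and to verify the normalization by instead computing $\gb(Z,Z)$ directly from the definition $\gb(Z,Z)=\frac{1}{\gamma^2(n!)^2}\gb^{ij}\eps_{ik_1\cdots k_n}\eps_{jl_1\cdots l_n}\{x^{k_1},\ldots,x^{k_n}\}\{x^{l_1},\ldots,x^{l_n}\}$ and using the identity $\gb^{ij}\eps_{ik_1\cdots k_n}\eps_{jl_1\cdots l_n} = (\det\gb)\,\gb_{\cdot\cdot}$-minors together with the earlier stated identity $\gamma^2 = \sum \frac{1}{n!}\gb_{ij}\{x^i,\xv^I\}\gb_{IJ}\{x^j,\xv^J\}$.
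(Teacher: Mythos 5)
Your proposal is correct and follows essentially the same route as the paper: the orthogonality is the identical ``$n+1$ antisymmetrized indices in $n$ dimensions'' argument, and your preferred normalization check --- computing $\gb(Z,Z)$ directly via the contraction of two Levi-Civita tensors and the identity $\gamma^2=\frac{1}{n!}\gb_{ij}\gb_{IJ}\{x^i,\xv^I\}\{x^j,\xv^J\}$ (equivalently $\Tr\P^2=n\gamma^2$) --- is exactly what the paper does. Your alternative route through $\gb(Z,N)$ is left sketchier, but since you yourself redirect to the direct $|Z|^2$ computation, the argument stands.
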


\begin{proof}
  To prove that $Z_\alpha$ are normal vectors, one simply notes that
  \begin{align*}
    \gamma\paraa{n!\sqrt{(p-1)!}}\gb(Z_\alpha,e_a) &= 
    \frac{1}{\rho}\eps^{a_1\cdots a_n}\eps_{jk_1\cdots k_n\alpha}\paraa{\d_ax^j}\paraa{\d_{a_1}x^{k_1}}\cdots\paraa{\d_{a_n}x^{k_n}}=0,
  \end{align*}
  since the $n+1$ indices $a,a_1,\ldots,a_n$ can only take on $n$
  different values and since
  $(\d_ax^j)(\d_{a_1}x^{k_1})\cdots(\d_{a_n}x^{k_n})$ is contracted
  with $\eps_{jk_1\cdots k_n\alpha}$ which is completely antisymmetric
  in $j,k_1,\ldots,k_n$. Let us now calculate $|Z|^2\equiv\gb(Z,Z)$
  when $p=1$. Using that\footnote{In our convention, no combinatorial factor is included in the anti-symmetrization; for instance,  
    $\delta^{[i}_{[k}\delta^{j]}_{l]}=\delta^i_k\delta^j_l-\delta^i_l\delta^j_k$.}
  \begin{align*}
    \eps_{ik_1\cdots k_n}\eps^{il_1\cdots l_n} = \delta^{[l_1}_{[k_1}\cdots\delta^{l_n]}_{k_n]}
  \end{align*}
  one obtains
  \begin{align*}
    |Z|^2 &= \frac{1}{\gamma^2n!^2}\gb_{l_1l_1'}\cdots\gb_{l_nl_n'}
    \eps_{ik_1\cdots k_n}\eps^{il_1\cdots l_n}
    \{x^{k_1},\ldots,x^{k_n}\}\{x^{l_1'},\ldots,x^{l_n'}\}\\
    &=\frac{1}{\gamma^2n!^2}\gb_{l_1l_1'}\cdots\gb_{l_nl_n'}
    \delta^{[l_1}_{[k_1}\cdots\delta^{l_n]}_{k_n]}
    \{x^{k_1},\ldots,x^{k_n}\}\{x^{l_1'},\ldots,x^{l_n'}\}\\
    &=\frac{1}{\gamma^2n!}\{x^{l_1},\ldots,x^{l_n}\}
    \gb_{l_1l_1'}\cdots\gb_{l_nl_n'}\{x^{l_1'},\ldots,x^{l_n'}\}\\
    &=\frac{1}{\gamma^2n!}(n-1)!\Tr\P^2 = \frac{1}{\gamma^2n!}(n-1)!n\gamma^2=1,
  \end{align*}
  which proves that $Z$ has unit length.
\end{proof}

\noindent If the codimension is greater than one, $Z_\alpha$ defines
more than $p$ non-zero normal vectors that do not in general fulfill any
orthonormality conditions. In principle, one can now apply the
Gram-Schmidt orthonormalization procedure to obtain a set of $p$
orthonormal vectors. However, it turns out that one can use $Z_\alpha$
to construct another set of normal vectors, avoiding explicit use of
the Gram-Schmidt procedure; namely, introduce
\begin{align*}
  \Z_{\alpha}^{\beta} = \gb(Z_{\alpha},Z^\beta),
\end{align*}
and consider it as a matrix over multi-indices $\alpha$ and
$\beta$. As such, the matrix is symmetric (with respect to
$\gb_{\alpha\beta}\equiv \gb_{i_1j_1}\cdots\gb_{i_{p-1}j_{p-1}}$) and
we let ${E_\alpha}^\beta,\mu_\alpha$ denote orthonormal eigenvectors
(i.e. $\gb_{\delta\sigma}E_\alpha^\delta E_\beta^\sigma=\delta_{\alpha\beta}$) 
and their corresponding
eigenvalues. Using these eigenvectors to define
\begin{align*}
  \Nh_\alpha = E^{\beta}_\alpha Z_\beta
\end{align*}
one finds that
$\gb(\Nh_\alpha,\Nh_\beta)=\mu_\alpha\delta_{\alpha\beta}$, i.e. the
vectors are orthogonal.

\begin{proposition}\label{prop:Zprojection}
  For $\Z_\alpha^\beta=\gb_{ij}Z^i_\alpha Z^{j\beta}$ it holds that
  \begin{align}
    \Z_\alpha^\delta\Z_\delta^\beta = \Z_\alpha^\beta\label{eq:Zidempot}\\
    \Z_\alpha^\alpha = p.\label{eq:Ztrace}
  \end{align}
\end{proposition}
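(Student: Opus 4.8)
The plan is to interpret $\Z_\alpha^\beta$ as the matrix of the orthogonal projection of $TM$ onto the normal bundle $\TSigma^\perp$, written in the (generally overcomplete) spanning set $\{Z_\alpha\}$, and then deduce both identities from properties of that projection. First I would observe that each $Z_\alpha$ lies in $\TSigma^\perp$ by Proposition \ref{prop:normalvectors}, and I would argue that the $Z_\alpha$ span $\TSigma^\perp$: indeed, by the computation in Proposition \ref{prop:normalvectors} (with the $\eps^{a_1\cdots a_n}\eps_{jk_1\cdots k_n\alpha}$ contraction) one sees that $\gb(Z_\alpha, N_B) = c\,\eps_{jk_1\cdots k_n\alpha}n_B^j(\d_{a_1}x^{k_1})\cdots$, i.e.\ the components of $Z_\alpha$ along the orthonormal normal basis $\{N_B\}$ are themselves (nonzero for enough $\alpha$) Nambu-type brackets, so the span of $\{Z_\alpha\}$ is all of $\TSigma^\perp$. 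Then I would compute $\sum_\alpha Z_\alpha^i Z_\alpha^j \gb$-contracted against an arbitrary normal vector and show it acts as the identity on $\TSigma^\perp$ up to the metric, using the $\eps$--$\delta$ identity $\eps_{jk_1\cdots k_n\alpha}\eps^{j l_1\cdots l_n \beta}$ reduced by the Levi-Civita contraction formula exactly as in the $|Z|^2=1$ computation, now keeping the free $\alpha,\beta$ indices.

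The key computation is therefore: expand $\Z_\alpha^\delta \Z_\delta^\beta = \gb_{ij}Z_\alpha^i Z_\delta^j\,\gb_{kl}Z^{k\delta}Z^{l\beta}$, substitute the definition \eqref{eq:Zdef} for each factor, and contract the two "internal" $\delta$-indexed Levi-Civita tensors. After pulling out the $1/(\gamma^2 (n!)^2 (p-1)!)$ prefactors, the $\delta$-sum produces $\eps_{j' k_1\cdots k_n \delta}\,\eps^{\,j' l_1\cdots l_n\, \delta}$ which, by the contraction identity for the $m$-dimensional Levi-Civita tensor, collapses to a sum of products of Kronecker deltas matching $\{k\}$ with $\{l\}$ and leaving the free indices; after contracting the resulting metric factors with the two surviving Nambu brackets one recognizes $\tfrac{1}{\gamma^2 n!}(n-1)!\,\Tr\P^2 = 1$ appearing as before, times the correct pairing of the remaining free $\alpha,\beta$ multi-indices, which reassembles precisely into $\Z_\alpha^\beta$. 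This is the idempotency \eqref{eq:Zidempot}.

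For the trace \eqref{eq:Ztrace}, I would set $\beta=\alpha$ and sum (using $\gb_{\alpha\beta}$ to contract multi-indices). Diagonalizing via the eigenvectors $E_\alpha^\beta$ and eigenvalues $\mu_\alpha$ introduced before the proposition, idempotency forces every eigenvalue $\mu_\alpha \in \{0,1\}$, so $\Z_\alpha^\alpha$ equals the number of unit eigenvalues, which is $\dim \TSigma^\perp = p$ since $\{Z_\alpha\}$ spans the $p$-dimensional normal space. Alternatively, and more in the spirit of the explicit calculations here, one can compute $\Z_\alpha^\alpha$ directly: it is $\gb_{ij}\sum_\alpha Z_\alpha^i Z^{j\alpha}$, and the same $\eps$--$\delta$ contraction (now on the $\alpha$ multi-index, contributing a combinatorial factor $(p-1)!$ against the normalization) reduces it to $\tfrac{1}{\gamma^2 n!}(n-1)!\,\Tr\P^2 \cdot p = p$.

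The main obstacle I anticipate is purely combinatorial: keeping track of the antisymmetrization conventions (the footnote's convention that no $1/k!$ is inserted) through the Levi-Civita contraction when the free indices are themselves multi-indices of length $p-1$, so that the factorials $n!$, $(n-1)!$, $(p-1)!$ and the prefactor $1/(\gamma\, n!\sqrt{(p-1)!})$ all cancel correctly to leave exactly $\Z_\alpha^\beta$ on the right of \eqref{eq:Zidempot} and exactly $p$ in \eqref{eq:Ztrace}. Everything else is a direct extension of the $p=1$ unit-length computation already carried out in the proof of Proposition \ref{prop:normalvectors}.
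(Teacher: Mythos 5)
Your proposal follows essentially the same route as the paper: everything rests on the single identity $Z^i_\delta Z^{j\delta} = \gb^{ij} - \gamma^{-2}(\P^2)^{ij}$, obtained from the contraction $\eps_{kk_1\cdots k_n\delta}\eps^{ll_1\cdots l_n\delta} = (p-1)!\,\delta^{[l}_{[k}\delta^{l_1}_{k_1}\cdots\delta^{l_n]}_{k_n]}$, after which the trace is immediate ($\gb_{ij}\gb^{ij}-\gamma^{-2}\Tr\P^2 = m-n=p$) and idempotency follows because $\P^2(Z_\alpha)=0$. One caution: in your key computation the internal $\delta$-contraction does not collapse directly to ``$1$ times $\Z_\alpha^\beta$''; the $(n+1)!$ terms of the antisymmetrized deltas also produce the cross-terms responsible for the $-\gamma^{-2}(\P^2)^{jk}$ piece, and these are annihilated only by the separate observation that $Z_\alpha\perp\TSigma$ --- an ingredient your first paragraph contains in spirit but your computation elides. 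Your eigenvalue/rank argument for the trace is also fine, but it requires the spanning claim, which is itself most cleanly extracted from the same identity rather than from the somewhat hand-waved ``nonzero for enough $\alpha$'' remark.
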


\begin{proof}
  Both statements can be easily proved once one has the following result
  \begin{align}
    Z^i_\alpha Z^{j\alpha} = \gb^{ij}-\frac{1}{\gamma^2}\paraa{\P^2}^{ij},\label{eq:ZZproj}
  \end{align}
  which is obtained by using that
  \begin{align*}
    \eps_{kk_1\cdots k_n\alpha}\eps^{ll_1\cdots l_n\alpha} = 
    (p-1)!\parab{\delta^{[l}_{[k}\delta^{l_1}_{k_1}\cdots\delta^{l_n]}_{k_n]}}.
  \end{align*}
  Formula
  (\ref{eq:Ztrace}) is now immediate, and to obtain
  (\ref{eq:Zidempot}) one notes that since $Z_\alpha\in\TSigma^\perp$
  it holds that $\P^2(Z_\alpha)=0$, due to the fact that $\P^2$ is
  proportional to the projection onto $\TSigma$.
\end{proof}

\noindent From Proposition \ref{prop:Zprojection} it follows that an
eigenvalue of $\Z$ is either 0 or 1, which implies that $\Nh_\alpha=0$
or $\gb(\Nh_\alpha,\Nh_\alpha)=1$, and that the number of non-zero
vectors is $\Tr\Z = \Z_\alpha^\alpha=p$. Hence, the $p$ non-zero
vectors among $\Nh_\alpha$ constitute an orthonormal basis of
$\TSigma^\perp$, and it follows that one can replace any sum over
normal vectors $N_A$ by a sum over the multi-index of $\Nh_\alpha$. 
As an example, let us work out some explicit expressions in the case
when $M=\reals^m$.

\begin{proposition}
  Assume that $M=\reals^m$ and that all repeated indices are summed
  over. For any $X\in\TSigma$ one has
  \begin{align}
    &\sum_{A=1}^p\paraa{\Tr\B_A}\B_A(X)^i = \frac{1}{(n-1)!^2}\Pi^{jk}
    \{\{x^j,\xv^J\},\xv^J\}\{x^i,\xv^I\}\{X^k,\xv^I\}\label{eq:trBABAx}\\
    &\sum_{A=1}^p\B_A^2(X)^i=\frac{1}{(n-1)!^2}\Pi^{jk}
    \{x^i,\xv^I\}\{\{x^j,\xv^J\}\{X^k,\xv^J\},\xv^I\}\\
    &\sum_{A=1}^p\paraa{\Tr\B_A}N_A^i=
    \frac{(-1)^n}{(n-1)!}\Pi^{ik}\{\{x^k,\xv^I\},\xv^I\}
  \end{align}
  where
  \begin{align}
    \Pi^{ij} = \delta^{ij}-\frac{1}{\gamma^2}\para{\P^2}^{ij}
  \end{align}
  is the projection onto the normal space.
\end{proposition}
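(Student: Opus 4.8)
The plan is to carry out everything in local coordinates, reducing the statement to two elementary inputs: that the Nambu bracket is a derivation in each of its slots, and the orthogonality relations between the $N_A$ and $\TSigma$. Since $M=\reals^m$ we have $\nablab=\d$, $\gb_{ij}=\delta_{ij}$ and $\Gammab^i_{jk}=0$, so the maps defined above become (with sums over the multi-index $I=i_1\cdots i_{n-1}$ understood)
\begin{align*}
  \T_A^{Ij}=\frac{(-1)^n}{\sqrt{(n-1)!}}\{\xv^I,n_A^j\},\qquad
  \B_A^{ik}=\frac{(-1)^n}{(n-1)!}\{x^i,\xv^I\}\{\xv^I,n_A^k\},
\end{align*}
and $\paraa{\P^2}^{ik}=\frac{1}{(n-1)!}\{x^i,\xv^I\}\{x^k,\xv^I\}$. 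Three facts will be used throughout. First, $n_A^j\paraa{\d_ax^j}=\gb(N_A,e_a)=0$, which substituted into (\ref{eq:PbracketDef}) yields $\sum_j n_A^j\{x^j,\xv^I\}=0$ for every multi-index $I$. Second, for $X\in\TSigma$ one has $n_A^kX^k=\gb(N_A,X)=0$. Third, $\sum_A n_A^i n_A^j$ is the matrix of the orthogonal projection onto $\TSigma^\perp$, which by (\ref{eq:ZZproj}) --- equivalently, since $\gamma^{-2}\P^2$ projects onto $\TSigma$ --- equals $\Pi^{ij}=\delta^{ij}-\frac{1}{\gamma^2}\paraa{\P^2}^{ij}$.

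The core step is to ``unwind'' a single $\B_A$ evaluated on a tangent vector so that its normal components end up outside the brackets. For $Y=Y^k\d_k\in\TSigma$, the derivation property of $\{\xv^I,\cdot\}$ in the last slot together with $n_A^kY^k=0$ gives $\sum_k\{\xv^I,n_A^k\}Y^k=-\sum_k n_A^k\{\xv^I,Y^k\}$, hence
\begin{align*}
  \B_A(Y)^i=\frac{(-1)^{n+1}}{(n-1)!}\,n_A^k\,\{x^i,\xv^I\}\{\xv^I,Y^k\},
\end{align*}
with $n_A^k$ now a mere scalar coefficient. Taking $Y=X$ handles the $\B_A(X)$ pieces; since $\B_A(X)=\gamma^2W_A(X)\in\TSigma$ by Proposition \ref{prop:PBSTproperties}, applying the identity once more with $Y=\B_A(X)$ handles $\B_A^2(X)$ --- the only subtlety being that substituting the formula for $(\B_A(X))^k$ leaves one factor of $n_A$ inside a bracket $\{\xv^I,\cdot\}$, which is pulled out by one further use of the derivation rule; the extra term so produced is proportional to $\sum_j n_A^j\{x^j,\xv^J\}=0$ and drops out. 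The analogous manipulation of $\Tr\B_A=\frac{(-1)^n}{(n-1)!}\{x^i,\xv^I\}\{\xv^I,n_A^i\}$, this time invoking $\sum_i n_A^i\{x^i,\xv^I\}=0$, produces $\Tr\B_A=\pm\frac{1}{(n-1)!}\,n_A^j\{\{x^j,\xv^J\},\xv^J\}$, again with $n_A^j$ outside.

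With every surviving normal component now sitting outside the brackets, one substitutes these expressions into $\sum_A\paraa{\Tr\B_A}\B_A(X)^i$, $\sum_A\B_A^2(X)^i$ and $\sum_A\paraa{\Tr\B_A}n_A^i$: in each case the dependence on $A$ has collapsed to a product $n_A^i n_A^j$ of scalars, and $\sum_A n_A^i n_A^j=\Pi^{ij}$ converts it into the projection $\Pi$. The remaining work is to bring the iterated brackets into exactly the stated form by reordering their arguments --- each transposition of a function between the first and the last of the $n$ slots contributing a sign $(-1)^{n-1}$ --- and to collect the accumulated powers of $(-1)^n$ and reciprocal factorials.

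The conceptual content here is small; the difficulty is organizational. The step I expect to demand the most care is making sure that after each application of the derivation rule every remaining normal component genuinely sits \emph{outside} all brackets, so that the sum over $A$ can be carried out and turned into $\Pi$, and that the terms in which an $n_A$ would otherwise remain trapped inside a bracket are precisely those annihilated by $\sum_j n_A^j\{x^j,\dots\}=0$ rather than forming a genuine remainder. Keeping the antisymmetrization signs, the powers of $(-1)^n$, and the factorials consistent across all three identities is the other natural place for an error to slip in.
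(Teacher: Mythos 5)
Your proposal is correct and follows essentially the same route as the paper: express $\B_A$, $\T_A$ and $\Tr\B_A$ in brackets, use the derivation property together with $n_A^j\{x^j,\xv^J\}=0$ and $n_A^kX^k=0$ to pull every normal component outside the brackets, then replace $\sum_A n_A^jn_A^k$ by $\Pi^{jk}$ and reorder bracket slots. The paper only writes out the first identity and declares the others analogous, so your explicit treatment of the extra Leibniz term in $\B_A^2(X)$ (which vanishes by $\sum_m n_A^m\{x^m,\xv^J\}=0$) is a welcome, and correct, elaboration rather than a deviation.
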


\begin{proof}
  Let us prove formula (\ref{eq:trBABAx}); the other formulas can be
  proven analogously. One rewrites
  \begin{align*}
    \paraa{\Tr\B_A}\B_A(X)^i &=
    \frac{1}{(n-1)!^2}\{x^j,\xv^J\}\{\xv^J,n^j_A\}\{x^i,\xv^I\}\{\xv^I,n_A^k\}X^k\\
    &= \frac{1}{(n-1)!^2}n_A^jn_A^k\{\xv^J,\{x^j,\xv^J\}\}
    \{x^i,\xv^I\}\{\xv^I,X^k\}
  \end{align*}
  since $n_A^j\{x^j,\xv^J\}=n_A^kX^k=0$, due to the fact that $N_A$
  is a normal vector. By replacing $n_A^jn_A^k$ with
  $\Nh_\alpha^j\Nh_\alpha^k$ and using the fact that
  \begin{align*}
    \Nh_\alpha^i\Nh_{\alpha}^j = \delta^{ij}-\frac{1}{\gamma^2}\paraa{\P^2}^{ij}
  \end{align*}
  one obtains
  \begin{equation*}
    \paraa{\Tr\B_A}\B_A(X)^i = \frac{1}{(n-1)!^2}
    \Pi^{jk}\{\{x^j,\xv^J\},\xv^J\}\{x^i,\xv^I\}\{X^k,\xv^I\}.\qedhere
  \end{equation*}
\end{proof}

\noindent For hypersurfaces in $\reals^{n+1}$, the ``Theorema Egregium'' states
that the determinant of the Weingarten map, i.e the ``Gaussian
curvature'', is an invariant (up to a sign when $\Sigma$ is
odd-dimensional) under isometries (this is in fact also true for
hypersurfaces in a manifold of constant sectional curvature). From
Proposition \ref{prop:TrPBST} we know that one can express $\det W_A$
in terms of $\Tr\S_A\T_A$. 
\begin{proposition}\label{prop:HyperSurfaceGaussian}
  Let $\Sigma$ be a hypersurface in $\reals^{n+1}$ and let $W$ denote
  the Weingarten map with respect to the unit normal
  \begin{align*}
    Z = \frac{1}{\gamma n!}\gb^{ij}\eps_{jkK}\{x^k,\xv^K\}.
  \end{align*}
  Then one can write $\det W$ as 
  \begin{align*}
    \det W = -\frac{1}{\gamma(\gamma n!)^{n+1}}&\sum
    \eps_{ilL}\eps_{j_1k_1K_1}\cdots\eps_{j_{n-1}k_{n-1}K_{n-1}}\\
    &\times\{x^i,\{x^{k_1},\xv^{K_1}\},\ldots,\{x^{k_{n-1}},\xv^{K_{n-1}}\}\}
    \{\xv^J,\{x^l,\xv^L\}\}.
  \end{align*}
\end{proposition}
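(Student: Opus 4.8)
The plan is to start from the identity $\det W = -\gamma^{-2}\tfrac1n\Tr\S\T$ provided by Proposition \ref{prop:TrPBST} (specialized to codimension $p=1$, where there is a single normal $N=Z$ and a single Weingarten map $W$), so that everything reduces to rewriting $\Tr\S\T$ in terms of Nambu brackets of the embedding coordinates alone. Recall that in $\reals^{n+1}$ the covariant derivative $\nablab$ is just the ordinary derivative, so $\S^{iJ}=\tfrac{(-1)^n}{\sqrt{(n-1)!}}\tfrac{\eps^{a\av}}{\rho}(\d_ax^i)(\d_{\av}\nv)^J$ and $\T^{Ij}=\tfrac{(-1)^n}{\sqrt{(n-1)!}}\tfrac{\eps^{\av a}}{\rho}(\d_{\av}\xv^I)(\d_an^j)$, and $\Tr\S\T = (\S)^{iI}(\T)^{Ji}\gb_{IJ}$. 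The point is that each factor $\d_a n^j$ and each $\d_{\av}\nv^J$ must be re-expressed through $Z$ and its Nambu-bracket form.

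The key steps, in order: (i) substitute the explicit formula $n^i = Z^i = \tfrac{1}{\gamma n!}\gb^{ij}\eps_{jkK}\{x^k,\xv^K\}$ for each occurrence of a component of the normal, treating $\gamma$ as a scalar function pulled out where convenient; (ii) absorb a tangential derivative $\d_a$ acting against the Levi-Civita symbol $\eps^{a\av}$ (or $\eps^{\av a}$) into a Nambu bracket — this is the mechanism already used throughout Section \ref{sec:nambuPoissonFormulation}, whereby $\tfrac{1}{\rho}\eps^{a\av}(\d_a f)(\d_{\av}\xv^I) = \{f,\xv^I\}/\text{(normalization)}$, so that $\d_a n^j$ hooked onto $\eps^{\av a}\d_{\av}\xv^I$ becomes a bracket $\{\xv^I,n^j\}$, and then $n^j$ itself is expanded by (i) to give $\{\xv^I,\gb^{jl}\eps_{lmM}\{x^m,\xv^M\}\}$, i.e. the term $\{\xv^J,\{x^l,\xv^L\}\}$ appearing in the statement; (iii) do the same on the $\S$ side: the string $(\d_{a_1}n^{i_1})\cdots(\d_{a_{n-1}}n^{i_{n-1}})$ contracted against $\eps^{a\av}(\d_ax^i)$ becomes, after expanding every $n^{i_s}=\gamma^{-1}(n!)^{-1}\gb^{\cdot\cdot}\eps_{\cdots}\{x^{k_s},\xv^{K_s}\}$ and pulling each $\d_{a_s}$ into a bracket, exactly $\{x^i,\{x^{k_1},\xv^{K_1}\},\ldots,\{x^{k_{n-1}},\xv^{K_{n-1}}\}\}$; (iv) collect the $\eps$-tensors: one factor $\eps_{ilL}$ from the $\S$-normal, $n-1$ factors $\eps_{j_sk_sK_s}$ from the $\d n$-string, and count the powers of $\gamma n!$ — there are $n+1$ expansions of $Z$ in total ($1$ from $\T$, $n-1$ from $\S$, and note $\Tr\S\T$ supplies one more via the $\gb_{IJ}$-contraction, together with the prefactor $-\gamma^{-2}/n$), giving the stated $-\tfrac{1}{\gamma(\gamma n!)^{n+1}}$; (v) finally use orthonormality $\gb(Z,e_a)=0$ and $|Z|=1$ (Proposition \ref{prop:normalvectors}) to discard the tangential pieces that arise when derivatives hit the $\gb^{ij}$ or the $1/\gamma$ factors — these contribute terms proportional to $n^i\{x^i,\cdots\}=0$ and hence drop out.

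The main obstacle I expect is bookkeeping in step (v): when $\d_a$ acts on $n^j = Z^j$, by the Leibniz rule it also differentiates the scalar prefactor $1/(\gamma n!)$ and the (constant, here) metric $\gb^{jl}$, producing extra terms of the form $(\d_a \gamma^{-1})\eps_{lmM}\{x^m,\xv^M\}\gb^{jl}$. One must argue these vanish upon the contractions at hand — either because they are proportional to $Z^j$ itself (which is normal, so killed against the tangential $\d_b x^\cdot$ factors or against another $Z$ via $\gb(Z,e_a)=0$), or because $\{n^j,\xv^I\} = \{Z^j,\xv^I\}$ and the $\gamma^{-1}$-derivative term, being parallel to $Z$, contributes $Z^j\{(\text{scalar}),\xv^I\}$ which cancels when re-contracted. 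Making this cancellation precise — essentially showing that one may "differentiate under the bracket" treating $\gamma$ and $\gb$ as inert — is the delicate point; everything else is the same Levi-Civita/cofactor manipulation (the Lemma on $gg^{ba}$ and the $\eps$-$\eps$ contraction identities) already exercised in the proofs of Propositions \ref{prop:PBSTproperties} and \ref{prop:normalvectors}.
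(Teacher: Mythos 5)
Your plan follows exactly the route the paper intends (the paper prints no proof, but introduces the proposition with the remark that $\det W$ is expressible via $\Tr\S\T$ from Proposition \ref{prop:TrPBST}): substitute $N=Z$ into $\det W=-\tfrac{1}{n\gamma^{2}}\Tr\S\T$, absorb the partial derivatives into Nambu brackets, and track the $\eps$-tensors and powers of $\gamma n!$. The cancellation you flag as delicate does go through: every Leibniz term in which a bracket slot differentiates the scalar prefactor $\gamma^{-1}$ leaves a factor $n_{j}$ outside the bracket that is contracted against a $\d_a x^{j}$ coming from the partner bracket, hence vanishes because $\gb(N,e_a)=0$.
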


\noindent In fact, one can express all the elementary symmetric
functions of the principle curvatures in terms of Nambu brackets as
follows: The elementary symmetric functions of the eigenvalues
of $W$ is given (up to a sign) as the coefficients of the polynomial
$\det(W-t\mid)$. Since $\B(X)=0$ for all $X\in\TSigma^\perp$ and
$\B(X)=\gamma^2W(X)$ for all $X\in\TSigma$, it holds that
\begin{align*}
  -t\det(W-t\mid_n) = \det(\gamma^{-2}\B-t\mid_{n+1})
  =\frac{1}{\gamma^{2(n+1)}}\det(\B-t\gamma^2\mid_{n+1})
\end{align*}
which implies that the coefficient of $t^k$ in $\det(W-t\mid)$ is
given by the coefficient of $t^{k+1}$ in
$-\det(\B-t\gamma^2\mid)\gamma^{2(n-k)}$.

\subsection{The Codazzi-Mainardi equations}\label{sec:CodazziMainardi}

\noindent When studying the geometry of embedded manifolds, the Codazzi-Mainardi
equations are very useful. In this section we reformulate these equations
in terms of Nambu brackets.

The Codazzi-Mainardi equations express the normal component
of $\Rb(X,Y)Z$ in terms of the second fundamental forms; namely
\begin{align}\label{eq:CMh}
  \begin{split}
    \gb\paraa{&\Rb(X,Y)Z,N_A} = \paraa{\nabla_Xh_A}(Y,Z) - \paraa{\nabla_Yh_A}(X,Z)\\
    &+\sum_{A=1}^p\bracketb{\gb(D_XN_B,N_A)h_B(Y,Z)-\gb(D_YN_B,N_A)h_B(X,Z)},
  \end{split}
\end{align}
for $X,Y,Z\in\TSigma$ and $A=1,\ldots,p$. Defining 
\begin{align}
  \begin{split}
    \W_A&(X,Y) = \paraa{\nabla_XW_A}(Y)-\paraa{\nabla_Y W_A}(X)\\
    &+\sum_{B=1}^p\bracketb{\gb(D_XN_B,N_A)W_B(Y)-\gb(D_YN_B,N_A)W_B(X)}
  \end{split}
\end{align}
one can rewrite the Codazzi-Mainardi equations as follows.
\begin{proposition}\label{prop:CMWPi}
  Let $\Pi$ denote the projection onto $\TSigma^\perp$.  Then the
  Codazzi-Mainardi equations are equivalent to
  \begin{align}\label{eq:CMCA}
    \W_A(X,Y) = -(\mid-\Pi)\paraa{\Rb(X,Y)N_A}
  \end{align}
  for $X,Y\in\TSigma$ and $A=1,\ldots,p$.
\end{proposition}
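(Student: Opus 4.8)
The plan is to show that \eqref{eq:CMCA} is just a repackaging of \eqref{eq:CMh}. The key observation is that the right-hand side of the classical Codazzi--Mainardi equation \eqref{eq:CMh} gives the coefficient of $N_A$ in the expansion of $\gb(\Rb(X,Y)Z,\cdot)$ along the normal frame, while the left-hand side involves $\Rb(X,Y)Z$ with $Z\in\TSigma$ arbitrary. To connect this with \eqref{eq:CMCA}, which instead has $\Rb(X,Y)N_A$ appearing, I would use the symmetry $\gb(\Rb(X,Y)Z,N_A) = \gb(\Rb(X,Y)N_A,Z)$ of the curvature tensor of $M$ (antisymmetry in the last two slots together with the pair-swap symmetry). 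This turns the left-hand side of \eqref{eq:CMh} into $\gb\paraa{\Rb(X,Y)N_A,Z}$ for all $Z\in\TSigma$, i.e.\ into the tangential component $(\mid-\Pi)\paraa{\Rb(X,Y)N_A}$ paired against $Z$.

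Next I would identify the right-hand side of \eqref{eq:CMh} with $\gb\paraa{\W_A(X,Y),Z}$. For this, recall Weingarten's equation $h_A(X,Y)=\gb(W_A(X),Y)$, and differentiate it: since $\nabla$ is the Levi-Civita connection of $g$, one has $(\nabla_X h_A)(Y,Z) = \gb\paraa{(\nabla_X W_A)(Y),Z}$, because the metric terms drop out under the covariant derivative. Doing the same for the $h_B$-terms and using $\gb(D_XN_B,N_A)=(D_X)_{BA}$, the full right-hand side of \eqref{eq:CMh} becomes $\gb\paraa{\W_A(X,Y),Z}$ with $\W_A$ exactly as defined before the proposition. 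Since this holds for all $Z\in\TSigma$, and both $\W_A(X,Y)$ and $(\mid-\Pi)\paraa{\Rb(X,Y)N_A}$ are tangential vectors, equality of all their inner products with tangent vectors is equivalent to equality of the vectors themselves, giving \eqref{eq:CMCA}. Conversely, pairing \eqref{eq:CMCA} with an arbitrary $Z\in\TSigma$ and running the identifications backwards recovers \eqref{eq:CMh}, establishing the claimed equivalence.

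The only mild subtlety, and the step I would be most careful about, is the bookkeeping with the connection $D$ on the normal bundle: one must check that the sign in $\gb(\Rb(X,Y)N_A,Z)=\gb(\Rb(X,Y)Z,N_A)$ matches the sign conventions used for $\Rb$ in \eqref{eq:GaussEquation} and for $D$ in the definition of $\W_A$, and that $(D_X)_{AB}$ versus $(D_X)_{BA}$ is used consistently (note $D$ is skew in $A,B$, exactly as exploited in the proof of the preceding proposition). Everything else is a direct translation via Weingarten's equation and the Leibniz rule for $\nabla$ acting on the $g$-bilinear form $h_A$. No genuinely hard estimate or construction is involved; the content is entirely in recognizing that $\W_A$ was defined precisely so that the Codazzi--Mainardi right-hand side is $\gb(\W_A(X,Y),\cdot)$.
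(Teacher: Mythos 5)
Your overall route is exactly the paper's: use Weingarten's equation and metric-compatibility of $\nabla$ to identify the right-hand side of \eqref{eq:CMh} with $\gb\paraa{\W_A(X,Y),Z}$, move the curvature term into the other slot, and then use that both $\W_A(X,Y)$ and the projected curvature term are tangential to pass from ``equal when paired with every $Z\in\TSigma$'' to equality of vectors. The identification of the right-hand side with $\gb\paraa{\W_A(X,Y),Z}$, including the handling of the $D$-terms, is correct.

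The one genuine error is the curvature identity you invoke. You write $\gb(\Rb(X,Y)Z,N_A)=\gb(\Rb(X,Y)N_A,Z)$, attributing it to ``antisymmetry in the last two slots together with the pair-swap symmetry.'' The correct statement is
\begin{align*}
\gb\paraa{\Rb(X,Y)Z,N_A}=-\gb\paraa{\Rb(X,Y)N_A,Z},
\end{align*}
which is the antisymmetry in the last two slots alone (a consequence of $\nablab\gb=0$; no pair-swap is needed, and composing with the pair-swap does not cancel the sign, it only moves $X,Y$ into the other pair). With your unsigned version the argument delivers $\W_A(X,Y)=+(\mid-\Pi)\paraa{\Rb(X,Y)N_A}$, i.e.\ the opposite sign to \eqref{eq:CMCA}. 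You correctly flagged this as the step needing care, but the identity as written is false and \eqref{eq:CMCA} does not follow from it; restoring the minus sign fixes the proof and makes it coincide with the paper's.
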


\begin{proof}
  Since $h_A(X,Y)=\gb(W_A(X),Y)$ (by Weingarten's equation) one can
  rewrite (\ref{eq:CMh}) as
  \begin{align}
    \gb\paraa{\W_A(X,Y),Z} = \gb\paraa{\Rb(X,Y)Z,N_A},
  \end{align}
  and since $\gb(\Rb(X,Y)Z,N_A) = -\gb(\Rb(X,Y)N_A,Z)$ this becomes
  \begin{align}
    \gb\paraa{\W_A(X,Y)+\Rb(X,Y)N_A,Z} = 0.
  \end{align}
  That this holds for all $Z\in\TSigma$ is equivalent to saying that 
  \begin{align}
    (\mid-\Pi)\paraa{\W_A(X,Y)+\Rb(X,Y)N_A} = 0,
  \end{align}
  from which (\ref{eq:CMCA}) follows since $\W_A(X,Y)\in\TSigma$.
\end{proof}

\noindent Note that since $\gamma^{-2}\P^2$ is the projection onto
$\TSigma$ one can write (\ref{eq:CMCA}) as
\begin{align}\label{eq:CMPgamma}
  \gamma^2\W_A(X,Y) = -\P^2\paraa{\Rb(X,Y)N_A}.
\end{align}
\noindent Since both $W_A$ and $D_X$ can be expressed in terms of
$\B_A$, one obtains the following expression for $\W_A$:
\begin{proposition}
  For $X,Y\in\TSigma$ one has
  \begin{align*}
    \gamma^2\W_A(X,Y) = &\paraa{\nablab_X\B_A}(Y)-\paraa{\nablab_Y\B_A}(X)\\
    &-\frac{1}{\gamma^2}\bracketb{\paraa{\nabla_X\gamma^2}\B_A(Y)-\paraa{\nabla_Y\gamma^2}\B_A(X)}\\
    &+\frac{1}{\gamma^2}\sum_{B=1}^p\bracketb{\gb\paraa{\B_A(N_B),X}\B_B(Y)-\gb\paraa{\B_A(N_B),Y}\B_B(X)}.
  \end{align*}
\end{proposition}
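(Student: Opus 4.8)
The plan is to start from the definition of $\W_A$ in terms of $W_A$ and $D_X$, and substitute the Nambu-bracket expressions for these quantities that were established earlier in this section. Recall Proposition \ref{prop:PBSTproperties} gives $\B_A(Y)=\gamma^2 W_A(Y)$ for $Y\in\TSigma$, and the computation following the proof of the proposition on $(D_X)_{AB}$ gives $\gb\paraa{\B_B(N_A),X}=\gamma^2(D_X)_{AB}$, equivalently $\gb(D_XN_B,N_A)=\gamma^{-2}\gb\paraa{\B_A(N_B),X}$. So the last (summation) line of the definition of $\W_A$ immediately becomes the last line of the claimed formula after multiplying through by $\gamma^2$. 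The only genuine work is in the first two terms, $(\nabla_XW_A)(Y)-(\nabla_YW_A)(X)$.

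First I would expand $\nabla_X W_A$ using $W_A=\gamma^{-2}\B_A$ restricted to $\TSigma$. The subtlety here is that $\B_A$ is defined as a map $TM\to TM$, and $\gamma^{-2}$ is a function on $\Sigma$, so the Leibniz rule for $\nabla_X$ acting on the product $\gamma^{-2}\B_A$ produces two pieces: one where $\nabla_X$ hits the scalar $\gamma^{-2}$, giving $\paraa{\nabla_X\gamma^{-2}}\B_A(Y) = -\gamma^{-4}\paraa{\nabla_X\gamma^2}\B_A(Y)$, and one where it hits $\B_A$, giving $\gamma^{-2}\paraa{\nablab_X\B_A}(Y)$ — here I should be a bit careful about whether the ambient covariant derivative $\nablab$ or the induced one $\nabla$ is the right one to differentiate $\B_A$ as a $TM$-valued object; since $\B_A$ takes values in $TM$ and is built from ambient data, $\nablab$ is the natural choice, and the tangential projection is already implicitly handled because we only evaluate against tangent vectors in the Codazzi-Mainardi equations. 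Antisymmetrizing in $X,Y$ and then multiplying the whole identity $W_A=\gamma^{-2}\B_A$-derived expression by $\gamma^2$, the first term yields $\paraa{\nablab_X\B_A}(Y)-\paraa{\nablab_Y\B_A}(X)$ and the scalar-derivative term yields $-\gamma^{-2}\bracketb{\paraa{\nabla_X\gamma^2}\B_A(Y)-\paraa{\nabla_Y\gamma^2}\B_A(X)}$, which is exactly the second line of the claimed formula.

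The main obstacle I expect is bookkeeping of which covariant derivative acts on which object and making sure the factors of $\gamma^2$ are distributed correctly after the Leibniz expansion; in particular one must track that $(\nabla_X W_A)(Y)$ means the covariant derivative of the $(1,1)$-tensor $W_A$ on $\Sigma$, not $\nabla_X(W_A(Y))$, so that the formula is tensorial in $Y$. Once the substitution $W_A\leftrightarrow\gamma^{-2}\B_A$ and $D\leftrightarrow\gamma^{-2}\B$ is made consistently and the three resulting groups of terms are collected, the identity falls out directly from the definition of $\W_A$; no further geometric input beyond Proposition \ref{prop:PBSTproperties} and the normal-connection identity is needed.
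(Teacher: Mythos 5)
Your overall route is the same as the paper's: the paper offers no separate proof of this proposition beyond the remark that both $W_A$ and $D_X$ can be expressed through $\B_A$, and your plan --- substitute $W_A=\gamma^{-2}\B_A$ on $\TSigma$ and $\gb(D_XN_B,N_A)=\gamma^{-2}\gb\paraa{\B_A(N_B),X}$ into the definition of $\W_A$, apply the Leibniz rule to the factor $\gamma^{-2}$, and antisymmetrize in $X,Y$ --- is exactly that substitution. The summation line and the $\nabla\gamma^2$ terms come out precisely as you describe.

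The one genuine gap is the step you yourself flag and then wave away: passing from the intrinsic expression $\nabla_X(\B_A(Y))-\B_A(\nabla_XY)$ to the ambient one $(\nablab_X\B_A)(Y)=\nablab_X(\B_A(Y))-\B_A(\nablab_XY)$. Writing out Gauss' formula, $\nablab_X(\B_A(Y))=\nabla_X(\B_A(Y))+\alpha(X,\B_A(Y))$ and $\B_A(\nablab_XY)=\B_A(\nabla_XY)+\B_A(\alpha(X,Y))$. After antisymmetrizing, the terms $\B_A(\alpha(X,Y))$ cancel because $\alpha$ is symmetric, but the other normal terms do not: one is left with
\begin{align*}
\alpha\paraa{X,\B_A(Y)}-\alpha\paraa{Y,\B_A(X)}=\gamma^2\sum_{B=1}^p\gb\paraa{X,[W_B,W_A](Y)}N_B,
\end{align*}
which vanishes for hypersurfaces (and whenever the Weingarten maps commute), but not in general codimension. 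So the identity you obtain agrees with the stated one only modulo this normal component; ``the tangential projection is already implicitly handled'' is not an argument inside this proposition --- it is exactly the computation that has to be carried out. To close the gap, either do this bookkeeping explicitly and note that the displayed identity is to be read after projecting onto $\TSigma$ (harmless for its intended use in (\ref{eq:CMPgamma}), where $\P^2$ projects everything tangentially anyway), or record the extra normal term / restrict to the case $[W_A,W_B]=0$.
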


\noindent As the aim is to express the Codazzi-Mainardi equations in
terms of Nambu brackets, we will introduce maps $\C_A$ that
is defined in terms of $\W_A$ and can be written as expressions
involving Nambu brackets.

\begin{definition}
  The maps $\C_A:C^\infty(\Sigma)\times\cdots\times
  C^\infty(\Sigma)\to \TSigma$ are defined as
  \begin{align}
    \C_A(f_1,\ldots,f_{n-2}) = \frac{1}{2\rho}\eps^{aba_1\cdots a_{n-2}}
    \W_A(e_a,e_b)\paraa{\d_{a_1}f_1}\cdots\paraa{\d_{a_{n-2}}f_{n-2}}
  \end{align}
  for $A=1,\ldots,p$ and $n\geq 3$. When $n=2$, $\C_A$ is defined as
  \begin{align*}
    \C_A = \frac{1}{2\rho}\eps^{ab}\W_A(e_a,e_b).
  \end{align*}
\end{definition}

\begin{proposition}\label{prop:CANPbracket}
  Let $\{g_1,g_2\}_f\equiv\{g_1,g_2,f_1,\ldots,f_{n-2}\}$. Then
  \begin{align*}
    \C_A(f_1,\ldots,&f_{n-2})^i = 
    \pb{\gamma^{-2}(\B_A)^i_k,x^k}_f
    +\frac{1}{\gamma^2}\pb{x^j,x^l}_f\bracketb{\Gammab^i_{jk}(\B_A)^k_l-(\B_A)^i_k\Gammab^k_{jl}}\\
    &-\frac{1}{\gamma^2}\sum_{B=1}^p\bracketb{
      \pb{n_A^k,x^l}_f(\B_B)^i_l+\Gammab^k_{lj}\pb{x^l,x^m}_fn_A^j(\B_B)^i_m
    }(n_B)_k.
  \end{align*}
\end{proposition}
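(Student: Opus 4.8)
The plan is to substitute the formula for $\gamma^2\W_A(X,Y)$ proved just above (the proposition expressing $\gamma^2\W_A$ through $\B_A$, $\gamma$, and the normal vectors), taken with $X=e_a$ and $Y=e_b$, into the definition $\C_A(f_1,\ldots,f_{n-2})=\tfrac1{2\rho}\eps^{aba_1\cdots a_{n-2}}\W_A(e_a,e_b)(\d_{a_1}f_1)\cdots(\d_{a_{n-2}}f_{n-2})$, and then to carry out the contraction one group of terms at a time, reading off Nambu brackets as they appear. The key preliminary observation is that, since $\eps^{aba_1\cdots a_{n-2}}$ is antisymmetric in $a,b$, each antisymmetrised pair in the formula for $\gamma^2\W_A$ simply doubles --- for instance $\tfrac12\eps^{ab}\bigl[(\nablab_{e_a}\B_A)(e_b)-(\nablab_{e_b}\B_A)(e_a)\bigr]=\eps^{ab}(\nablab_{e_a}\B_A)(e_b)$, and likewise for the $\nabla\gamma^2$ term and the $\gb(\B_A(N_B),\cdot)$ sum --- while, because $\nablab$ is torsion free and $[e_a,e_b]=0$, everything symmetric in $a\leftrightarrow b$ (in particular every $\d_a\d_b x^k$) is annihilated. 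It is exactly this mechanism that converts the covariant derivatives into Nambu brackets.

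For the first group I would expand $(\nablab_{e_a}\B_A)(e_b)^i=(\nablab_{e_a}\B_A)^i_k\,\d_b x^k$ in local coordinates, treating $\B_A$ as a $(1,1)$-tensor on $M$ restricted to $\Sigma$, so that $(\nablab_{e_a}\B_A)^i_k=\d_a(\B_A)^i_k+\Gammab^i_{lj}(\d_a x^l)(\B_A)^j_k-\Gammab^j_{lk}(\d_a x^l)(\B_A)^i_j$. Contracting against $\tfrac{\gamma^{-2}}\rho\eps^{aba_1\cdots a_{n-2}}(\d_{a_1}f_1)\cdots(\d_{a_{n-2}}f_{n-2})$ --- the factor $\gamma^{-2}$ coming from $\W_A=\gamma^{-2}(\gamma^2\W_A)$ --- then gives precisely $\gamma^{-2}\{(\B_A)^i_k,x^k\}_f+\tfrac1{\gamma^2}\{x^j,x^l\}_f\bigl[\Gammab^i_{jk}(\B_A)^k_l-(\B_A)^i_k\Gammab^k_{jl}\bigr]$; the last summand is in fact annihilated on contraction, since $\Gammab^k_{jl}$ is symmetric in $j,l$ whereas $\{x^j,x^l\}_f$ is antisymmetric, but it is naturally retained as it completes the manifestly covariant combination. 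The term $\gamma^{-2}\{(\B_A)^i_k,x^k\}_f$ then merges with the contribution of the $-\gamma^{-4}(\nabla_{e_a}\gamma^2)\B_A(e_b)$ group via the Leibniz rule for the Nambu bracket together with $\{\gamma^{-2},\,\cdot\,\}_f=-\gamma^{-4}\{\gamma^2,\,\cdot\,\}_f$, yielding $\{\gamma^{-2}(\B_A)^i_k,x^k\}_f$. This accounts for the first two bracketed groups in the statement.

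For the normal-vector sum I would use Proposition \ref{prop:PBSTproperties} to evaluate $\gb(\B_A(N_B),e_a)=-\gamma^2\gb(N_B,\nablab_a N_A)=-\gamma^2(n_B)_l(\nablab_a N_A)^l$, so that this group reduces to $-\tfrac1{\gamma^2}\sum_B(n_B)_l(\B_B)^i_k$ multiplied by $\tfrac1\rho\eps^{aba_1\cdots a_{n-2}}(\nablab_a N_A)^l(\d_b x^k)(\d_{a_1}f_1)\cdots(\d_{a_{n-2}}f_{n-2})$; expanding $(\nablab_a N_A)^l=\d_a n_A^l+\Gammab^l_{mj}(\d_a x^m)n_A^j$ identifies this last $\eps$-contraction with $\{n_A^l,x^k\}_f+\Gammab^l_{mj}n_A^j\{x^m,x^k\}_f$, which after relabelling the dummy indices is exactly the bracketed expression in the last term of the statement. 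Collecting the three contributions gives the identity.

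I expect the main obstacle to be bookkeeping rather than anything conceptual: one must keep careful track of which $M$-indices are those antisymmetrised by $\eps^{ab}$ and which are passive, so that the two Christoffel terms coming from differentiating the $(1,1)$-tensor $\B_A$ reassemble into the combination $\Gammab^i_{jk}(\B_A)^k_l-(\B_A)^i_k\Gammab^k_{jl}$ rather than into an accidental symmetrisation, and one must be disciplined about the placement of the $\gamma^{-2}$ factors, which coalesce into $\{\gamma^{-2}(\B_A)^i_k,x^k\}_f$ only once the derivation property of the bracket is combined with the $\nabla\gamma^2$ term.
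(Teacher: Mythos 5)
Your proposal is correct and is precisely the computation the paper leaves implicit: substitute the preceding expression for $\gamma^2\W_A(e_a,e_b)$ into the definition of $\C_A$, use the antisymmetry of $\eps^{aba_1\cdots a_{n-2}}$ to double the antisymmetrised pairs and kill the symmetric pieces, expand $\nablab\B_A$ and $\nablab N_A$ in coordinates (with $\gb(\B_A(N_B),e_a)=-\gamma^2\gb(N_B,\nablab_aN_A)$ from Proposition \ref{prop:PBSTproperties}), and recombine the $\gamma^{-2}$ factors via the Leibniz rule. Your index bookkeeping checks out against the stated formula, including the observation that the $(\B_A)^i_k\Gammab^k_{jl}$ term is kept only for manifest covariance.
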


\begin{remark}
  \noindent In case $\Sigma$ is a hypersurface, the expression for $\C\equiv \C_1$ simplifies to
  \begin{align*}
    \C(f_1,\ldots,f_{n-2})^i = 
    &\pb{\gamma^{-2}\B^i_k,x^k}_f
    +\frac{1}{\gamma^2}\pb{x^j,x^l}_f\bracketb{\Gammab^i_{jk}\B^k_l-\B^i_k\Gammab^k_{jl}},
  \end{align*}
  since $D_XN=0$.  
\end{remark}

\noindent It follows from Proposition \ref{prop:CMWPi} that we can
reformulate the Codazzi-Mainardi equations in terms of $\C_A$:

\begin{theorem}\label{thm:CMNambu}
  For all $f_1,\ldots,f_{n-2}\in C^\infty(\Sigma)$ it holds that
  \begin{align}\label{eq:CMNambu}
    \gamma^2\C_A(f_1,\ldots,f_{n-2}) = (\P^2)^{i}_j\bracketb{\{x^k,\Gammab^j_{kj'}\}_f-\pb{x^k,x^l}_f\Gammab^m_{lj'}\Gammab^j_{km}}n_A^{j'}\d_i,
  \end{align}
  for $A=1,\ldots,p$, where $\{g_1,g_2\}_f=\{g_1,g_2,f_1,\ldots,f_{n-2}\}$.
\end{theorem}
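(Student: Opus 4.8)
The plan is to combine Proposition~\ref{prop:CMWPi} (equivalently, its rescaled form \eqref{eq:CMPgamma}) with the definition of the maps $\C_A$, and then simply compute the right-hand side of \eqref{eq:CMPgamma} in local coordinates. Starting from $\gamma^2\W_A(e_a,e_b) = -\P^2\paraa{\Rb(e_a,e_b)N_A}$, I would contract both sides with $\tfrac{1}{2\rho}\eps^{aba_1\cdots a_{n-2}}\paraa{\d_{a_1}f_1}\cdots\paraa{\d_{a_{n-2}}f_{n-2}}$. By the definition of $\C_A$ the left-hand side becomes exactly $\gamma^2\C_A(f_1,\ldots,f_{n-2})$, so the whole content of the theorem is to show that the contracted right-hand side equals $(\P^2)^i_j\bracketb{\{x^k,\Gammab^j_{kj'}\}_f - \pb{x^k,x^l}_f\Gammab^m_{lj'}\Gammab^j_{km}}n_A^{j'}\d_i$.

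The key step is therefore to expand $\gb\paraa{\Rb(e_a,e_b)N_A,\,\cdot\,}$ in coordinates. Writing $e_a = (\d_a x^k)\d_k$ and $N_A = n_A^{j'}\d_{j'}$, and using the standard coordinate formula for the curvature tensor $\Rb$ of $M$ in terms of the Christoffel symbols $\Gammab$, one gets $\Rb(e_a,e_b)N_A$ as a combination of terms of the form $(\d_a x^k)(\d_b x^l)(\d_k\Gammab^j_{lj'} - \d_l\Gammab^j_{kj'} + \Gammab^j_{km}\Gammab^m_{lj'} - \Gammab^j_{lm}\Gammab^m_{kj'})n_A^{j'}$. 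The antisymmetrization in $a\leftrightarrow b$ coming from $\eps^{ab\cdots}$ pairs up the two ``$\d\Gammab$'' terms and the two ``$\Gammab\Gammab$'' terms. One then recognizes $(\d_a x^k)(\d_b x^l)\eps^{ab a_1\cdots a_{n-2}}(\d_{a_1}f_1)\cdots \propto \{x^k,x^l\}_f \cdot \rho$ and, crucially, that $(\d_a x^k)\,\d_k\Gammab^j_{lj'}\cdot(\d_b x^l)\eps^{ab\cdots} = (\d_a x^k)\,\d_k(\Gammab^j_{lj'})\cdots$ reassembles into the Nambu bracket $\{x^k,\Gammab^j_{kj'}\}_f$ once the free $l$ index is also hit by a $\d_b x^l$ — here one must be careful that $\Gammab^j_{kj'}$ is being differentiated along $\Sigma$, i.e. $\d_a(\Gammab^j_{kj'}\circ x) = (\d_k\Gammab^j_{lj'})(\d_a x^l)$ only after summing appropriately, so the bracket $\{x^k,\Gammab^j_{kj'}\}_f$ is literally the Nambu bracket of the embedding coordinate $x^k$ with the pulled-back function $\Gammab^j_{kj'}$. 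Finally, applying $\P^2$ contributes the prefactor $(\P^2)^i_j$ (with the index $j$ contracted against the $\d_j$-slot of the curvature expression), and the overall powers of $\gamma$ and $\rho$ must be tracked to confirm they collapse to the stated normalization.

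The main obstacle I expect is purely bookkeeping: correctly matching the antisymmetrizations, the placement of indices $k,l,j,j',m$, and the combinatorial/$\rho$-$\gamma$ factors so that the two curvature terms ($\d\Gammab$ and $\Gammab\Gammab$) land precisely as $\{x^k,\Gammab^j_{kj'}\}_f$ and $-\pb{x^k,x^l}_f\Gammab^m_{lj'}\Gammab^j_{km}$ respectively, with no stray factor of $2$ (the $\tfrac{1}{2}$ in the definition of $\C_A$ should cancel against the two equal terms produced by antisymmetrization). It is also worth checking the sign: $\gb(\Rb(X,Y)Z,V)$ and the convention for $\Rb^j_{\ j'lk}$ must be consistent with the earlier conventions in Section~\ref{sec:preliminaries}, and with the sign already absorbed in \eqref{eq:CMPgamma} coming from $\gb(\Rb(X,Y)Z,N_A) = -\gb(\Rb(X,Y)N_A,Z)$. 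Once these conventions are pinned down, the identity is a direct, if somewhat lengthy, coordinate computation, and no genuinely new idea beyond \eqref{eq:CMPgamma} and the definition of $\C_A$ is needed.
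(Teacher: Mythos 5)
Your proposal follows the paper's own route exactly: start from the rescaled Codazzi--Mainardi identity $\gamma^2\W_A(X,Y)=-\P^2\paraa{\Rb(X,Y)N_A}$ of Proposition \ref{prop:CMWPi}, contract with $\tfrac{1}{2\rho}\eps^{aba_1\cdots a_{n-2}}(\d_{a_1}f_1)\cdots(\d_{a_{n-2}}f_{n-2})$ to produce $\gamma^2\C_A$ on the left, and then expand $\Rb(e_a,e_b)N_A$ in Christoffel symbols so that the antisymmetrization reassembles the $\d\Gammab$ and $\Gammab\Gammab$ terms into the stated Nambu brackets (the paper leaves this last coordinate computation as "straightforward", and your accounting of the chain rule, the cancellation of the factor $\tfrac{1}{2}$, and the sign conventions is the correct way to carry it out). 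No gap; this is the same proof.
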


\begin{proof}
  As noted previously, one can write the Codazzi-Mainardi equations as
  \begin{align*}
    \gamma^2\W_A(X,Y) = -\P^2\paraa{\Rb(X,Y)N_A}.
  \end{align*}
  That the above equation holds for all $X,Y\in\TSigma$ is equivalent to saying that
  \begin{align*}
    \gamma^2\frac{1}{2\rho}\eps^{aba_1\cdots a_{n-2}}\W_A(e_a,e_b)
    =-\frac{1}{2\rho}\eps^{aba_1\cdots a_{n-2}}\P^2\paraa{\Rb(e_a,e_b)N_A}
  \end{align*}
  for all values of $a_1,\ldots,a_{n-2}\in\{1,\ldots,n\}$; furthermore, this is equivalent to
  \begin{align*}
    \gamma^2\C_A(f_1,\ldots,f_{n-2}) = 
    -\frac{1}{2\rho}\eps^{aba_1\cdots a_{n-2}}\P^2\paraa{\Rb(e_a,e_b)N_A}
    (\d_{a_1}f_1)\cdots(\d_{a_{n-2}}f_{n-2})
  \end{align*}
  for all $f_1,\ldots,f_{n-2}\in C^\infty(\Sigma)$. It is now straightforward to show that
\begin{align*}
    -\frac{1}{2\rho}\eps^{aba_1\cdots a_{n-1}}&\paraa{\Rb(e_a,e_b)N_A}^i(\d_{a_1}f_1)\cdots(\d_{a_{n-2}}f_{n-2})\\
    &=\parab{\{x^k,\Gammab^i_{kj}\}_f-\pb{x^k,x^l}_f\Gammab^m_{lj}\Gammab^i_{km}}n_A^{j},
  \end{align*}
  which proves the statement.
\end{proof}

\noindent If $M$ is a space of constant curvature (in which case $\gb(\Rb(X,Y)Z,N_A)=0$), then Theorem \ref{thm:CMNambu} states that
\begin{align}
  \C_A(f_1,\ldots,f_{n-2}) = 0
\end{align}
for all $f_1,\ldots,f_{n-2}\in C^\infty(\Sigma)$. Furthermore, if $M=\reals^m$, then (\ref{eq:CMNambu}) becomes
\begin{align}
  \gamma^2\pb{\gamma^{-2}(\B_A)^i_k,x^k}_f-\sum_{B=1}^p\bracketb{
    \pb{n_A^k,x^l}_f(\B_B)^i_l}(n_B)_k = 0.
\end{align}

\subsection{Covariant derivatives}\label{sec:covariantDerivatives}

Equation (\ref{eq:GaussformulaB}) tells us that knowing $\nablab_XY$,
for $X,Y\in\TSigma$, one can compute $\nabla_XY$ through the formula
\begin{align*}
  \nabla_XY = \nablab_XY - \frac{1}{\gamma^2}\sum_{A=1}^p\gb\paraa{\B_A(X),Y}N_A,
\end{align*}
which requires explicit knowledge about the normal vectors. Are there
other quantities involving $\nabla$ that can be computed solely in
terms of the embedding coordinates? We will now show that the two derivations
\begin{align}
  &D^I(u)\equiv\frac{1}{\gamma\sqrt{(n-1)!}}\{u,\xv^I\}\\
  &\D^i(u)\equiv\gb_{IJ}D^I(x^i)D^J(u),
\end{align}
can be considered as analogues of covariant derivatives on
$\Sigma$. Their indices are lowered by the ambient metric
$\gb_{ij}$. Let us start by showing that several standard formulas
involving covariant derivatives with contracted indices also hold for
our newly defined derivations.
\begin{proposition}\label{prop:covderivFormulas}
  For $u,v\in C^\infty(\Sigma)$ it holds that
  \begin{align}
    \nabla u &= \D^i(u)\d_i=D_I(u)D^I(x^i)\d_i\\
    g\paraa{\nabla u,\nabla v} &= \D_i(u)\D^i(v)=D_I(u)D^I(v)\\
    \Delta(u)&=\D_i\D^i(u)=D_ID^I(u)\\
    |\nabla^2u|^2&=\D_i\D^j(u)\D_j\D^i(u)=D_ID^J(u)D_JD^I(u)\label{eq:nablaSquSq}
  \end{align}
\end{proposition}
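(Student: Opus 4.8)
The plan is to verify each of the four identities by rewriting both sides in local coordinates and reducing everything to the cofactor identity for $g^{ab}$ together with Proposition~\ref{prop:PBSTproperties} (or rather the scalar computation underlying it). The key observation is that $D^I(u)\d_i$, once the index on $D^I(x^i)$ is contracted, should reproduce the tangential gradient. Concretely, $D_I(u)D^I(x^i)\d_i = \tfrac{1}{\gamma^2(n-1)!}\gb_{IJ}\{u,\xv^I\}\{x^i,\xv^J\}\d_i$, and expanding the two Nambu brackets gives $\tfrac{1}{\gamma^2(n-1)!}\tfrac{\eps^{a\av}\eps^{c\cv}}{\rho^2}(\d_a u)(\d_{\av}\xv^I)(\d_c x^i)(\d_{\cv}\xv^J)\gb_{IJ}\d_i$. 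Contracting $\gb_{IJ}$ with $\d_{\av}\xv^I\,\d_{\cv}\xv^J$ produces $g_{\av\cv} = g_{a_1c_1}\cdots g_{a_{n-1}c_{n-1}}$, and then the cofactor Lemma collapses $\tfrac{\eps^{a\av}\eps^{c\cv}}{(n-1)!}g_{\av\cv} = g\,g^{ac}$, leaving $\tfrac{1}{\gamma^2}\tfrac{g}{\rho^2}g^{ac}(\d_a u)(\d_c x^i)\d_i = g^{ac}(\d_a u)(\d_c x^i)\d_i$, which is exactly $\nabla u$ written in the ambient basis. This establishes the first line, and it is the prototype for all the others.

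For the second identity, $\D_i(u)\D^i(v) = \gb_{ij}\D^i(u)\D^j(v)$; substituting $\D^i(u)=D_I(x^i)D^I(u)$ and using $\gb_{ij}D_I(x^i)D_K(x^j) = (D\text{-bracket of }x^i,x^j\text{ contracted})$, the same $g_{\av\cv}\to g\,g^{ac}$ reduction yields $g^{ab}(\d_a u)(\d_b v)$, i.e. $g(\nabla u,\nabla v)$; the expression $D_I(u)D^I(v)$ is handled identically. The third identity, $\Delta u = \D_i\D^i(u) = D_I D^I(u)$, is the trace/divergence version: one writes $\D_i\D^i(u) = \gb_{ij}\D^i(\D^j(u))$ and must recognize $\tfrac{1}{\gamma}\{\,\cdot\,,\xv^I\}$ acting twice, after contraction, as $\tfrac{1}{\sqrt g}\d_a(\sqrt g\, g^{ab}\d_b u)$. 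Here one should be careful: the $\rho$-density cancels against the $\rho$ in the bracket definition only after the derivative has acted, so the computation hinges on $\d_a(\tfrac{1}{\rho}\eps^{a\av}\cdots)$ distributing correctly — but since $\eps^{a\av}\d_a(\d_{a_j}x^{i_j}) $ is symmetric in the contracted pair and hence drops out, the only surviving term is the one where $\d_a$ hits $\d_c x^i$ inside $D^I(x^i)$ or lands on the metric-dependent factors, reconstructing the Laplace--Beltrami operator. This is essentially the statement that $D^I$ "knows" the density $\rho=\sqrt g/\gamma$ is the one making the bracket the Nambu bracket.

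For the last identity~(\ref{eq:nablaSquSq}), one expands $\D_i\D^j(u)\,\D_j\D^i(u)$ by the same substitution $\D^j(u)=D_J(x^j)D^J(u)$ and then $\D_i(\,\cdot\,) = \gb_{ik}D_I(x^k)D^I(\cdot)$, producing a product of four $D$-brackets contracted through two copies of $\gb_{\av\cv}\to g\,g^{ac}$; after the dust settles one recovers $g^{ac}g^{bd}(\nabla_a\nabla_b u)(\nabla_c\nabla_d u)$, where the second covariant derivative on $\Sigma$ emerges because the tangential projection implicit in applying $D^I$ a second time supplies exactly the Christoffel correction — this is the content of the Gauss formula~(\ref{eq:GaussformulaB}) together with the fact that $\gamma^{-2}\P^2$ projects onto $\TSigma$. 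The equality with $D_I D^J(u)\,D_J D^I(u)$ follows because, as in the earlier lines, contracting $\gb_{IJ}$ against products of $\d_{\av}\xv^I$ always reduces to the same $g_{\av\cv}$ pattern regardless of where the free index sits.

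The main obstacle is the third and fourth identities, specifically verifying that the second application of $D^I$ (or $\D^i$) really produces the intrinsic covariant Hessian $\nabla^2 u$ and not merely the ambient second derivative: one must check that the normal components generated by differentiating $\d_c x^i$ are annihilated by the contraction with $\P^2$ (equivalently, that the extra terms organize into $-\Gamma^c_{ab}\d_c x^i$ rather than $+h_{A,ab}N_A^i$), and that no stray $\nabla\gamma$ terms survive. I expect this to work cleanly because $\P^2$ restricted to $\TSigma$ is just $\gamma^2\cdot\mathrm{id}$ and the density $\rho$ was chosen precisely so that $\{\,\cdot\,,\xv^I\}$ is the volume-form Nambu bracket, but it is the one place where a naive index computation could go astray, so I would carry it out by first proving the $n=2$ surface case by hand, where all the Nambu brackets are ordinary Poisson brackets, and then observing that the general-$n$ computation is formally identical with $\{\,\cdot\,,x^i\}$ replaced by $\tfrac{1}{\sqrt{(n-1)!}}\{\,\cdot\,,\xv^I\}$ throughout.
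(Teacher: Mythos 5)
Your treatment of the first two identities is fine: they are zeroth order in derivatives of the embedding, and the cofactor contraction $\frac{1}{(n-1)!}\eps^{a\av}\eps^{c\cv}g_{\av\cv}=g\,g^{ac}$ does all the work exactly as you describe. The gap is in the third and fourth identities, which you correctly identify as the crux but then do not actually prove — you assert that the normal components ``organize into $-\Gamma^c_{ab}\d_cx^i$'' and that ``no stray $\nabla\gamma$ terms survive,'' and defer the verification. Two specific problems with your sketch as written. First, the cancellation you invoke — ``$\eps^{a\av}\d_a(\d_{a_j}x^{i_j})$ is symmetric in the contracted pair and hence drops out'' — does not apply to the dangerous terms: in $D_ID^J(u)$ the outer derivative index and the index on $\d_{a_j}x^{i_j}$ are contracted with \emph{different} Levi-Civita symbols (one from the outer bracket, one from the inner), so there is no antisymmetry to exploit; these terms survive and must be dealt with by other means. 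Second, in general coordinates you must also track $\d_a$ hitting $\gamma^{-1}$ and $\rho^{-1}$ inside the nested bracket, and these contributions do not vanish pointwise; your sketch never accounts for them.

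The paper's proof sidesteps all of this with one observation you are missing: both sides of each identity are coordinate scalars on $\Sigma$, so it suffices to verify them at a point in \emph{normal coordinates} $u^1,\ldots,u^n$, where $\Gamma^a_{bc}=0$, equivalently $\gb_{ij}(\d_ax^i)\,\d^2_{bc}x^j=0$, and where first derivatives of $g$ (hence of $\gamma$ and $\sqrt{g}$) vanish at the point. Then every term in which the outer derivative hits $\d_{\pv}\xv^J$ appears in a combination such as $(\d_a\d_{\pv}\xv^J)(\d_{\cv}\xv^L)\gb_{LJ}$, which is zero, and $|\nabla^2u|^2$ reduces to $g^{ac}g^{bd}(\d^2_{ab}u)(\d^2_{cd}u)$; the remaining computation is the same cofactor contraction you already used for the first identity. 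I recommend you adopt this device rather than attempting the general-coordinate bookkeeping: without it your plan is not demonstrably completable, and with it the third and fourth identities become as mechanical as the first two.
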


\begin{proof}
  The most convenient way of proving the above identities is to work
  in a coordinate system where $u^1,\ldots,u^n$ are normal
  coordinates. In particular, this implies that $\Gamma^a_{bc}=0$,
  which is equivalent to $\gb_{ij}(\d_ax^i)\d^2_{bc}x^j=0$. Let us now
  prove formula (\ref{eq:nablaSquSq}) for the operators $D^I$.

  Let us first note that in normal coordinate one obtains
  \begin{align*}
    |\nabla^2u|^2\equiv\paraa{\nabla_a\nabla_bu}\paraa{\nabla_c\nabla_d u}g^{ac}g^{bd}
    =g^{ac}g^{bd}\paraa{\d^2_{ab}u}\paraa{\d^2_{cd}u}.
  \end{align*}
  We now compute
  \begin{align*}
    &D_ID^J(u)D_JD^I(u) = 
    \frac{1}{\gamma^2(n-1)!^2}\{\gamma^{-1}\{u,\xv^J\},\xv^K\}\gb_{KI}
    \{\gamma^{-1}\{u,\xv^I\},\xv^L\}\gb_{LJ}\\
    &= \frac{1}{g^2(n-1)!^2}\eps^{a\av}\d_a\paraa{\eps^{p\pv}(\d_pu)(\d_{\pv}\xv^J)}\paraa{\d_{\av}\xv^K}\gb_{KI}
    \eps^{c\cv}\d_c\paraa{\eps^{q\qv}(\d_qu)(\d_{\qv}\xv^I)}\paraa{\d_{\cv}\xv^L}\gb_{LJ}
  \end{align*}
  The terms involving $\d_a\d_{\pv}\xv^J$ and $\d_c\d_{\qv}\xv^I$
  vanish since they appear in combinations such as
  $(\d_a\d_{\pv}\xv^J)(\d_{\cv}\xv^L)\gb_{LJ}$ which is zero due to
  the presence of a normal coordinate system. Thus,
  \begin{align*}
    D_ID^J(u)D_JD^I(u) &= 
    \frac{1}{g^2(n-1)!^2}\eps^{a\av}\eps^{q\qv}g_{\av\qv}\eps^{p\pv}\eps^{c\cv}g_{\pv\cv}
    \paraa{\d^2_{ap}u}\paraa{\d^2_{cq}u}\\
    &=g^{aq}g^{pc}\paraa{\d^2_{ap}u}\paraa{\d^2_{cq}u}=|\nabla^2u|^2.
  \end{align*}
  The other formulas can be proved analogously.
\end{proof}

\noindent By definition, the curvature tenor of $\Sigma$ arises when
one commutes two covariant derivatives. In light of Theorem
\ref{thm:ricciCurvature}, one may ask if there is a similar Nambu
bracket relation which gives rise to the Ricci curvature. A particular
example that introduces curvature is the following
\begin{equation}\label{eq:curvatureEq}
  (\nabla^au)\nabla_a\nabla_b\nabla^bu=(\nabla^au)\nabla_b\nabla_a\nabla^bu
  -g(\R(\nabla u),\nabla u).
\end{equation}

\noindent Since $(\nabla^au)\nabla_a\nabla_b\nabla^bu=g(\nabla
u,\nabla\Delta u)$, it follows from Proposition
\ref{prop:covderivFormulas} that one can write it as
\begin{equation}
  (\nabla^au)\nabla_a\nabla_b\nabla^bu=
  \D_i(u)\D^i\D_j\D^j(u) = D_I(u)D^ID_JD^J(u),
\end{equation}
and the term in (\ref{eq:curvatureEq}) involving the Ricci curvature
is written in terms of Nambu brackets through Theorem
\ref{thm:ricciCurvature}. Using the relation
\begin{equation}
  \Delta\paraa{|\nabla u|^2} = 2\paraa{\nabla^au}\nabla^b\nabla_a\nabla_b u
  +2|\nabla^2u|^2,
\end{equation}
and (\ref{eq:nablaSquSq}) one obtains
\begin{align*}
  \paraa{\nabla^au}\nabla^b\nabla_a\nabla_b u &= \frac{1}{2}\D_i\D^i\paraa{\D_j(u)\D^j(u)}
  -\D_i\D^j(u)\D_j\D^i(u)\\
  &=\D_i(u)\D^j\D_j\D^i(u)+\fatcom{\D_i,\D^j}(u)\D_i\D^j(u),
\end{align*}
where $\fatcom{\D^i,\D^j}$ denotes the commutator with respect to composition of operators.
Thus, we arrive at the following result:
\begin{proposition}\label{prop:covariantDRicci}
  Let $\R$ be the Ricci curvature of $\Sigma$ and let $u\in
  C^\infty(\Sigma)$. Then it holds that
  \begin{align*}
    &\D_i(u)\D^i\D_j\D^j(u) = \D_i(u)\D^j\D_j\D^i(u)+\fatcom{\D_i,\D^j}(u)\D_i\D^j(u)
    -g(\R(\nabla u),\nabla u)\\
    &D_I(u)D^ID_JD^J(u) = D_I(u)D^JD_JD^I(u)+\fatcom{D_I,D^J}(u)D_ID^J(u)
    -g(\R(\nabla u),\nabla u).  
  \end{align*}
\end{proposition}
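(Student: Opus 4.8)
The plan is to assemble the identity by translating, term by term, the classical Bochner--Weitzenb\"ock identity (\ref{eq:curvatureEq}) into the language of the derivations $\D^i$ and $D^I$, using only Proposition~\ref{prop:covderivFormulas} together with the fact that $\D^i$ and $D^I$ are genuine derivations of $C^\infty(\Sigma)$. First I would recall why (\ref{eq:curvatureEq}) holds: applying the Ricci identity to the gradient vector field $\nabla^b u$ and contracting the resulting curvature term with $\nabla^a u$ gives $(\nabla^a u)\nabla_b\nabla_a\nabla^b u - (\nabla^a u)\nabla_a\nabla_b\nabla^b u = g(\R(\nabla u),\nabla u)$, using also the symmetry of the Hessian of a function. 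This is the only input concerning $\R$, and nothing here uses the embedding.

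For the left-hand side, since $\nabla_b\nabla^b u=\Delta u$ we have $(\nabla^a u)\nabla_a\nabla_b\nabla^b u = g(\nabla u,\nabla\Delta u)$, and the formulas of Proposition~\ref{prop:covderivFormulas} for $\nabla u$, for $g(\nabla\,\cdot\,,\nabla\,\cdot\,)$, and for $\Delta$ rewrite this immediately as $\D_i(u)\D^i\D_j\D^j(u)$, respectively $D_I(u)D^ID_JD^J(u)$; the term $g(\R(\nabla u),\nabla u)$ is carried along unchanged (and may, if one wishes, be expanded via Theorem~\ref{thm:ricciCurvature}). The substantive term is $(\nabla^a u)\nabla_b\nabla_a\nabla^b u$, which by metric compatibility equals $(\nabla^a u)\nabla^b\nabla_a\nabla_b u$. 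I would then solve the Bochner relation $\Delta(|\nabla u|^2)=2(\nabla^a u)\nabla^b\nabla_a\nabla_b u+2|\nabla^2 u|^2$ for it, obtaining $\frac{1}{2}\Delta(|\nabla u|^2)-|\nabla^2 u|^2$; Proposition~\ref{prop:covderivFormulas} (the formulas for $|\nabla u|^2$, for $\Delta$, and the identity (\ref{eq:nablaSquSq}) for $|\nabla^2 u|^2$) then turns this into $\frac{1}{2}\D_i\D^i\paraa{\D_j(u)\D^j(u)}-\D_i\D^j(u)\D_j\D^i(u)$, and the analogous expression with $D^I$. Expanding $\D_i\D^i\paraa{\D_j(u)\D^j(u)}$ by the Leibniz rule and reorganising, the terms that fail to cancel because composition of the $\D$'s is non-commutative are precisely $\fatcom{\D_i,\D^j}(u)\D_i\D^j(u)$, leaving $\D_i(u)\D^j\D_j\D^i(u)+\fatcom{\D_i,\D^j}(u)\D_i\D^j(u)$. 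Substituting the three pieces into (\ref{eq:curvatureEq}) yields the first displayed identity, and the $D^I$ computation is word for word the same.

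I expect the only real obstacle to be this last reorganisation: keeping precise track of which index is contracted where, verifying that the Leibniz expansion of $\frac{1}{2}\D_i\D^i\paraa{\D_j(u)\D^j(u)}$ collapses to $\D_i(u)\D^j\D_j\D^i(u)$ plus exactly the commutator term and nothing else, and --- the point underlying both this step and Proposition~\ref{prop:covderivFormulas} --- confirming that $\D^i$ and $D^I$ satisfy the Leibniz rule, which follows from the derivation property of the Nambu bracket in its first argument. Everything else is routine substitution.
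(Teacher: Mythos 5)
Your proposal is correct and follows essentially the same route as the paper: it starts from the classical identity (\ref{eq:curvatureEq}), rewrites $(\nabla^au)\nabla_a\nabla_b\nabla^bu=g(\nabla u,\nabla\Delta u)$ via Proposition \ref{prop:covderivFormulas}, and converts $(\nabla^au)\nabla^b\nabla_a\nabla_bu$ through the Bochner relation and (\ref{eq:nablaSquSq}) into $\tfrac{1}{2}\D_i\D^i\paraa{\D_j(u)\D^j(u)}-\D_i\D^j(u)\D_j\D^i(u)$, which the Leibniz rule reorganises into $\D_i(u)\D^j\D_j\D^i(u)+\fatcom{\D_i,\D^j}(u)\D_i\D^j(u)$. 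The only addition beyond the paper's argument is your (correct) justification of (\ref{eq:curvatureEq}) itself via the Ricci identity, which the paper simply takes as known.
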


\noindent Note that it follows from Theorem \ref{thm:ricciCurvature}
that the term $g(\R(\nabla u),\nabla u)$ can be written in
terms of Nambu brackets. If the formulas in Proposition
\ref{prop:covariantDRicci} are integrated, one arrives at expressions
whose index structure closely resembles that of equation
(\ref{eq:curvatureEq}). Namely, by partial integration one obtains
\begin{align*}
  \int\parab{D_I(u)D^JD_JD^I(u)+\fatcom{D_I,D^J}(u)D_ID^J(u)}\sqrt{g}
  =\int D_I(u)D_JD^ID^J(u)\sqrt{g},
\end{align*}
which implies
\begin{align}
  \int D^I(u)D_ID^JD_J(u)\sqrt{g} = 
  \int\parab{D_I(u)D_JD^ID^J(u)-g(\R(\nabla u),\nabla u)}\sqrt{g}.
\end{align}
Note that since the operators $D^I$ contain a factor of $\gamma^{-1}$,
the integration is actually performed with respect to $\rho$, as
$\gamma^{-1}\sqrt{g}=\rho$. 

The derivations $D^I$ and $\D^i$ have indices of the ambient space
$M$; do they exhibit any tensorial properties?  The object $\D^i(u)$
transforms as a tensor in the ambient space $M$, i.e.
\begin{align*}
  \D_y^i(u) &= \frac{1}{\gamma^2(n-1)!}\{u,\yv^I\}\gb_{IJ}(y)\{y^i,\yv^J\}\\
  &=\frac{1}{\gamma^2(n-1)!}\frac{\d y^i}{\d x^k}\{u,\xv^I\}\gb_{IJ}(x)\{x^k,\xv^J\}
  =\frac{\d y^i}{\d x^k}\D_x^k(u),
\end{align*}
but this does not hold for the next order derivative
$\D^i\D^j(u)$ due to the second derivatives on the embedding
functions. One can however ``covariantize'' this object by adding
extra terms. 
\begin{proposition}
  Define $\nabla^{ij}$ acting on $u\in C^\infty(\Sigma)$ as
  \begin{align}
    \nabla^{ij}(u) = \frac{1}{2}\parab{\D^i\D^j(u)+\D^j\D^i(u)
    -\D^u\paraa{\D^i(x^j)}},
  \end{align}
  where
  $\D^u(f)=\frac{1}{\gamma^2(n-1)!}\{f,\xv^I\}\gb_{IJ}\{u,\xv^J\}$. Then
  $\nabla^{ij}(u)$ transforms as a tensor in $M$, i.e.
  \begin{align*}
    \nabla_y^{ij}(u) = \frac{\d y^i}{\d x^k}\frac{\d y^j}{\d x^l}\nabla^{kl}_x(u),
  \end{align*}
  and for all $X,Y\in\TSigma$ it holds that
  \begin{align*}
    \nabla_{ij}(u)X^iY^j = \paraa{\nabla_a\nabla_bu}X^aY^b.
  \end{align*}
  In particular, this implies that $\gb_{ij}\nabla^{ij}(u)=\Delta(u)$ and
  $\gb_{ij}\gb_{kl}\nabla^{ik}(u)\nabla^{jl}(u)=|\nabla^2u|^2$.
\end{proposition}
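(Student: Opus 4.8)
The plan is to show that $\nabla^{ij}(u)$ is, at every point, simply the second covariant derivative of $u$ on $\Sigma$ with its two (upper) indices pushed forward to $M$ by the differentials $\d_ax^i$; once this closed form is in hand, the transformation law, the contraction identity and the two trace identities all follow at once.

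I would begin by recording what is needed about the building blocks. By Proposition~\ref{prop:covderivFormulas} the identity $\nabla f=\D^i(f)\d_i$ holds for every $f\in C^\infty(\Sigma)$, which in coordinates reads $\D^i(f)=g^{ab}(\d_ax^i)(\d_bf)$; similarly, by the same proposition, $\D^u(f)=\gb_{IJ}D^I(f)D^J(u)=g(\nabla f,\nabla u)=g^{ab}(\d_af)(\d_bu)$. (Both operators are in fact intrinsic to the embedded $\Sigma$, since the density $\rho$ cancels in $D^I$.) In particular $\nabla^{ij}(u)$ is a well-defined object carrying ambient indices and independent of the choice of coordinates on $\Sigma$, so it suffices to evaluate it at one arbitrary point $p\in\Sigma$ in a convenient chart.

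Next I would pass to normal coordinates for $g_{ab}$ centered at $p$, so that $\Gamma^a_{bc}(p)=0$, hence $\d_ag^{bc}|_p=0$ and $\nabla_a\nabla_bu|_p=\d_a\d_bu|_p$. Differentiating with the two formulas above and the Leibniz rule, $\D^i\D^j(u)|_p$ splits as $A^{ij}+B^{ij}$ with
\begin{align*}
  A^{ij}=g^{ab}g^{cd}\paraa{\d_cx^i}\paraa{\d_ax^j}\paraa{\d_d\d_bu},\qquad
  B^{ij}=g^{ab}g^{cd}\paraa{\d_cx^i}\paraa{\d_bu}\paraa{\d_d\d_ax^j},
\end{align*}
the $\d g^{ab}$ term dropping out; a relabelling of the contracted indices shows $A^{ij}=A^{ji}$. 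Running the identical computation on $\D^u(\D^i(x^j))|_p$, starting from $\D^i(x^j)=g^{cd}(\d_cx^i)(\d_dx^j)$, produces exactly the two terms $B^{ij}+B^{ji}$. Therefore in $\nabla^{ij}(u)=\frac{1}{2}\paraa{\D^i\D^j(u)+\D^j\D^i(u)}-\frac{1}{2}\D^u(\D^i(x^j))$ the second-derivative-of-embedding pieces cancel, and since $p$ is arbitrary and both sides are coordinate-free on $\Sigma$,
\begin{align*}
  \nabla^{ij}(u)=\paraa{\d_cx^i}\paraa{\d_ax^j}\,\nabla^{ca}(u),\qquad \nabla^{ca}(u):=g^{cd}g^{ae}\nabla_d\nabla_eu,
\end{align*}
globally, $\nabla^{ca}(u)$ being the raised Hessian of $u$ on $\Sigma$.

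Everything claimed then reads off from this closed form. Under $x\mapsto y$ one has $\d_cy^i=\frac{\d y^i}{\d x^k}\d_cx^k$, giving $\nabla_y^{ij}(u)=\frac{\d y^i}{\d x^k}\frac{\d y^j}{\d x^l}\nabla_x^{kl}(u)$; for $X=X^ae_a,Y=Y^be_b\in\TSigma$, substituting $X^i=X^a\d_ax^i$, $Y^j=Y^b\d_bx^j$ and using $\gb_{ij}(\d_ax^i)(\d_bx^j)=g_{ab}$ collapses $\nabla_{ij}(u)X^iY^j$ to $(\nabla_a\nabla_bu)X^aY^b$; and contracting with $\gb$ gives $\gb_{ij}\nabla^{ij}(u)=g_{ca}\nabla^{ca}(u)=\Delta(u)$ and $\gb_{ij}\gb_{kl}\nabla^{ik}(u)\nabla^{jl}(u)=|\nabla^2u|^2$. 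The only genuinely delicate step is the bookkeeping in the previous paragraph: verifying that $\D^u(\D^i(x^j))$ equals $B^{ij}+B^{ji}$ on the nose, so that it exactly kills the non-tensorial part of the symmetrized $\D^i\D^j(u)$, and being careful that the normal-coordinate simplifications $\d g^{ab}=0$ and $\nabla\nabla u=\d\d u$ are invoked only at the point $p$.
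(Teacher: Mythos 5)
Your argument is correct: the closed form $\nabla^{ij}(u)=(\d_ax^i)(\d_bx^j)g^{ac}g^{bd}\nabla_c\nabla_du$, obtained by checking in normal coordinates that $\D^u(\D^i(x^j))$ supplies exactly the two second-derivative-of-embedding terms $B^{ij}+B^{ji}$ that spoil the tensoriality of the symmetrized $\D^i\D^j(u)$, immediately yields all four claims. The paper states this proposition without proof, but your normal-coordinate computation is precisely the technique it uses to prove the preceding Proposition \ref{prop:covderivFormulas}, so this is the intended argument.
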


\subsection{Embedded surfaces}\label{sec:surfaces}

\noindent Let us now turn to the special case when $\Sigma$ is a
surface. For surfaces, the tensors $\P$, $\S_A$ and $\T_A$ are themselves
maps from $TM$ to $TM$, and $\S_A$ coincides with
$\T_A$. Moreover, since the second fundamental forms can be considered
as $2\times 2$ matrices, one has the identity
\begin{align*}
  2\det W_A = \paraa{\tr W_A}^2-\tr W_A^2,
\end{align*}
which implies that the scalar curvature can be written as
\begin{align*}
  R &= \frac{1}{\gamma^4}\paraa{\P^2}^{ik}\paraa{\P^2}^{jl}\Rb_{ijkl} 
  + 2\sum_{A=1}^p\det W_A.
\end{align*}
Thus, defining the Gaussian curvature $K$ to be one half of the above
expression (which also coincides with the sectional curvature), one obtains
\begin{align}
  K = \frac{1}{2\gamma^4}\paraa{\P^2}^{ik}\paraa{\P^2}^{jl}\Rb_{ijkl}-\frac{1}{2\gamma^2}\sum_{A=1}^p\Tr\S_A^2,
\end{align}
which in the case when $M=\reals^m$ becomes
\begin{align}
  K =-\frac{1}{2\gamma^2}\sum_{A=1}^p\sum_{i,j=1}^m\{x^i,n_A^j\}\{x^j,n_A^i\},
\end{align}
and by using the normal vectors $Z_\alpha$ the expression for $K$ can
be written as
\begin{equation}
  \begin{split}
    K &= -\frac{1}{8\gamma^4(p-1)!}
    \sum\eps_{jklI}\eps_{imnI}\{x^i,\{x^k,x^l\}\}\{x^j,\{x^m,x^n\}\}\\
    &=\frac{1}{\gamma^4}\parac{\frac{1}{2}\{\{x^j,x^k\},x^k\}\{\{x^j,x^l\},x^l\}
      -\frac{1}{4}\{\{x^j,x^k\},x^l\}\{\{x^j,x^k\},x^l\}}.    
  \end{split}
\end{equation}

\noindent To every Riemannian metric on $\Sigma$ one can associate an
almost complex structure $\J$ through the formula
\begin{equation*}
  \J(X) = \frac{1}{\sqrt{g}}\eps^{ac}g_{cb}X^be_a,
\end{equation*}
and since on a two dimensional manifold any almost complex structure
is integrable, $\J$ is a complex structure on $\Sigma$. For $X\in TM$ one has
\begin{align}
  \P(X) = -\frac{1}{\gamma\sqrt{g}}\gb\paraa{X,e_a}\eps^{ab}e_b,
\end{align}
and it follows that one can express the complex structure in terms of $\P$.

\begin{theorem}\label{thm:complexstructure}
  Defining $\JM(X)=\gamma\P(X)$ for all $X\in TM$ it holds that
  $\J_M(Y)=\J(Y)$ for all $Y\in\TSigma$. That is, $\gamma\P$ defines a
  complex structure on $\TSigma$.
\end{theorem}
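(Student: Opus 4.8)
The plan is to compute $\P(X)$ explicitly for an arbitrary $X \in TM$ and compare with the defining formula for $\J$. First I would unwind the definition of $\P$ for surfaces: here $n=2$, so the multi-index $I$ collapses to a single index $i_1$, the factor $1/\sqrt{(n-1)!}$ is $1$, and $\P^{ij} = \{x^i, x^j\} = \frac{1}{\rho}\eps^{ab}(\d_a x^i)(\d_b x^j)$. Lowering the second index with $\gb$ and contracting against $X = X^k \d_k$ gives $\P(X) = \P^{ij}\gb_{jk}X^k \d_i = \frac{1}{\rho}\eps^{ab}(\d_a x^i)(\d_b x^j)\gb_{jk}X^k \d_i = \frac{1}{\rho}\eps^{ab}(\d_a x^i)\gb(X, e_b)\d_i$. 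Relabeling the antisymmetric pair $a \leftrightarrow b$ produces the sign that matches the stated formula $\P(X) = -\frac{1}{\gamma\sqrt g}\gb(X, e_a)\eps^{ab}e_b$ once one recalls $e_a = (\d_a x^i)\d_i$ and $\rho = \sqrt g/\gamma$.

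Next I would restrict to a tangent vector $Y = Y^c e_c \in \TSigma$. The key point is that $\gb(Y, e_b) = \gb(Y^c e_c, e_b) = g_{cb}Y^c$, since the induced metric is precisely $\gb$ restricted to $\TSigma$. Substituting this into the formula for $\JM(Y) = \gamma\,\P(Y)$ gives
\begin{align*}
  \JM(Y) = \gamma \cdot \Big(-\frac{1}{\gamma\sqrt g}\Big) g_{cb}Y^c \eps^{ab}e_a = -\frac{1}{\sqrt g}\eps^{ab}g_{cb}Y^c e_a,
\end{align*}
and after using antisymmetry of $\eps$ to flip $\eps^{ab} = -\eps^{ba}$ this is exactly $\frac{1}{\sqrt g}\eps^{ac}g_{cb}Y^b e_a = \J(Y)$. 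So the identity $\JM(Y) = \J(Y)$ for $Y \in \TSigma$ is immediate once the two formulas are written side by side.

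Finally, to justify the last sentence — that $\gamma\P$ defines a complex structure on $\TSigma$ — I would note two things. First, $\P^2(Y) = \gamma^2 Y$ for $Y \in \TSigma$ by equation (\ref{eq:P2Y}) of Proposition \ref{prop:PBSTproperties}, so $\JM^2(Y) = \gamma^2 \P^2(Y)/\gamma^2 \cdot(-1)$... more carefully: $\J^2(Y) = -Y$ is the standard fact that the metric-associated almost complex structure squares to $-\id$ on a surface (a one-line check: $\J^2{}^a_b = \frac{1}{g}\eps^{ac}g_{cd}\eps^{de}g_{eb} = -\delta^a_b$ using $\eps^{ac}\eps^{de}g_{cd} = -g(\delta^a_b\cdots)$-type identities, or simply invoke that $\gamma\P$ agrees with $\J$ which is known to be a complex structure). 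Second, integrability is automatic in real dimension two, as already remarked in the text just before the theorem. The main (and only mild) obstacle is bookkeeping the signs and the collapse of the $n-1$ multi-indices to scalars when specializing the general-$n$ formulas to $n=2$; once $\P(X) = -\frac{1}{\gamma\sqrt g}\gb(X,e_a)\eps^{ab}e_b$ is established, the theorem follows by direct comparison.
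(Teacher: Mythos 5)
Your overall strategy --- compute $\P$ as a map $TM\to TM$ directly from the definition, restrict to $\TSigma$, and compare with the defining formula for $\J$ --- is exactly the (implicit) argument of the paper, which states the displayed formula for $\P(X)$ and then asserts the theorem without further proof. However, the step where you claim the direct computation ``matches the stated formula $\P(X)=-\frac{1}{\gamma\sqrt g}\gb(X,e_a)\eps^{ab}e_b$ once one recalls $\rho=\sqrt g/\gamma$'' is false, and it is precisely the load-bearing step. From $\P^{ij}=\{x^i,x^j\}=\frac{1}{\rho}\eps^{ab}(\d_ax^i)(\d_bx^j)$ one gets $\P(X)=\frac{1}{\rho}\eps^{ab}\gb(X,e_b)e_a=-\frac{1}{\rho}\gb(X,e_a)\eps^{ab}e_b$, and $\frac{1}{\rho}=\frac{\gamma}{\sqrt g}$, which differs from the paper's prefactor $\frac{1}{\gamma\sqrt g}$ by a factor of $\gamma^2$. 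You cannot recover the paper's prefactor by relabeling indices; the two expressions agree only when $\gamma\equiv 1$, i.e.\ $\rho=\sqrt g$. With the prefactor that the definition actually produces, your second paragraph gives $\gamma\P(Y)=\gamma^2\J(Y)$, not $\J(Y)$: the map that coincides with $\J$ on $\TSigma$ is $\gamma^{-1}\P$, not $\gamma\P$.

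Your own unfinished sanity check in the last paragraph is the cleanest way to see that something is off. The paper's $(\P^2)^{ik}=\P^{iI}\P^{kJ}\gb_{IJ}$ contracts the \emph{second} indices of both factors, so by antisymmetry of $\P^{ij}$ it equals $-\P\circ\P$ as a map; hence Proposition \ref{prop:PBSTproperties} says $(\P\circ\P)(Y)=-\gamma^2Y$ for $Y\in\TSigma$. An almost complex structure must square to $-\mathrm{id}$, and $(\gamma\P)\circ(\gamma\P)(Y)=-\gamma^4Y$, whereas $(\gamma^{-1}\P)\circ(\gamma^{-1}\P)(Y)=-Y$. So the normalization is forced to be $\gamma^{-1}$, confirming the factor-of-$\gamma^2$ discrepancy in your first paragraph. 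The root cause is a typo in the source (in the displayed formula for $\P(X)$ and correspondingly in the statement $\JM=\gamma\P$), but a blind proof cannot simply assert that the definition reproduces that display; carried out honestly, your computation proves $\J=\gamma^{-1}\P$ on $\TSigma$ and thereby flags the needed correction to the statement. You should either prove that corrected statement or explicitly note that the claim as written fails unless $\rho=\sqrt g$.
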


\noindent Let us now turn to the Codazzi-Mainardi equations for
surfaces. In this case, the map $\C_A$ becomes a tangent vector and 
one can easily see in Proposition \ref{prop:CANPbracket} that the sum
in the expression for $\C_A$ can be written in a slightly more compact form, namely
\begin{align*}
  \C_A = &\pb{\gamma^{-2}(\B_A)^i_k,x^k}\d_i
  +\frac{1}{\gamma^2}\pb{x^j,x^l}\bracketb{\Gammab^i_{jk}(\B_A)^k_l-(\B_A)^i_k\Gammab^k_{jl}}\\
  &\qquad+\frac{1}{\gamma^2}\sum_{B=1}^p\B_B\S_A(N_B).
\end{align*}
\noindent Thus, for surfaces embedded in $\reals^m$ the Codazzi-Mainardi equations become
\begin{align*}
  \sum_{j,k=1}^m\pb{\gamma^{-2}\{x^i,x^j\}\{x^j,n_A^k\},x^k}\d_i+\frac{1}{\gamma^2}\sum_{B=1}^p\B_B\S_A(N_B)=0,
\end{align*}
and in $\reals^3$ one has
\begin{align}
  \sum_{j,k=1}^3\pba{\gamma^{-2}\{x^i,x^j\}\{x^j,n^k\},x^k} = 0.
\end{align}

\noindent Let us note that one can rewrite these equations using the following result:

\begin{proposition}
  For $M=\reals^m$ and $i=1,\ldots,m$ it holds that
  \begin{align}
    \sum_{j,k=1}^m\pba{f\{x^i,x^j\}\{x^j,n^k\},x^k} = 
    \sum_{j,k=1}^m\pba{f\{x^i,x^j\}\{x^j,x^k\},n^k}
  \end{align}
  for any normal vector $N=n^i\d_i$ and any $f\in C^\infty(\Sigma)$.
\end{proposition}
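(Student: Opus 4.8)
The plan is to expand both sides with the Leibniz rule $\{AB,C\}=A\{B,C\}+B\{A,C\}$ applied to the outermost bracket, and then match the resulting pieces. Writing $\psi^j=f\{x^i,x^j\}$ (for the fixed $i$), the left-hand side becomes $\sum_{j,k}\big(\psi^j\{\{x^j,n^k\},x^k\}+\{x^j,n^k\}\{\psi^j,x^k\}\big)$ and the right-hand side becomes $\sum_{j,k}\big(\psi^j\{\{x^j,x^k\},n^k\}+\{x^j,x^k\}\{\psi^j,n^k\}\big)$. Since the argument below will not use the specific form of $\psi^j$, it suffices to prove, for arbitrary functions $\psi^j\in C^\infty(\Sigma)$, the ``cross term'' identity $\sum_{j,k}\{x^j,n^k\}\{\psi^j,x^k\}=\sum_{j,k}\{x^j,x^k\}\{\psi^j,n^k\}$ and the ``second order term'' identity $\sum_{j,k}\psi^j\{\{x^j,n^k\},x^k\}=\sum_{j,k}\psi^j\{\{x^j,x^k\},n^k\}$.

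The whole proof rests on one observation: the tensor $P_{ab}:=\sum_{k}(\d_an^k)(\d_bx^k)$ is symmetric. Indeed, $\sum_k n^k\d_bx^k=\gb(N,e_b)=0$ because $N$ is normal, and differentiating in $u^a$ gives $P_{ab}=-\sum_k n^k\,\d_a\d_bx^k$, which is symmetric since partial derivatives commute (it is, up to sign, the second fundamental form along $N$). For the cross terms I would substitute the explicit formula $\{g,h\}=\rho^{-1}\eps^{ab}(\d_ag)(\d_bh)$ and carry out the sum over $k$ in each side: the left side collapses to $\rho^{-2}\eps^{ab}\eps^{cd}(\d_ax^j)(\d_c\psi^j)P_{bd}$ and the right side to $\rho^{-2}\eps^{ab}\eps^{cd}(\d_ax^j)(\d_c\psi^j)P_{db}$, and these coincide by the symmetry of $P$.

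For the second order terms I would invoke the Jacobi identity for the bracket (which in the surface case $n=2$ is a genuine Poisson bracket) with arguments $x^j$, $n^k$, $x^k$, giving $\{\{x^j,n^k\},x^k\}-\{\{x^j,x^k\},n^k\}=-\{\{n^k,x^k\},x^j\}$. Summing over $k$ and pulling the outer bracket out yields $-\big\{\sum_k\{n^k,x^k\},\,x^j\big\}$, and $\sum_k\{n^k,x^k\}=\rho^{-1}\eps^{ab}P_{ab}=0$ because $P$ is symmetric while $\eps^{ab}$ is antisymmetric. Hence $\sum_k\big(\{\{x^j,n^k\},x^k\}-\{\{x^j,x^k\},n^k\}\big)=0$ for every $j$, so the $\psi^j$-weighted sums agree; adding the two contributions proves the proposition. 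The only point requiring genuine care is the index bookkeeping that makes the two cross terms collapse to the \emph{same} contraction; apart from that, both halves of the argument are immediate consequences of the single fact $P_{ab}=P_{ba}$, i.e.\ of $N\perp\TSigma$, so I do not anticipate any real obstacle.
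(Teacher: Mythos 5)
Your proof is correct. You split the identity the same way the paper does --- Leibniz rule on the outer bracket, then the ``second order'' piece and the ``cross'' piece separately --- and your treatment of the second order piece (Jacobi identity plus $\sum_k\{n^k,x^k\}=0$, the latter being precisely the statement that $P_{ab}=-\sum_k n^k\d^2_{ab}x^k$ is symmetric) is essentially identical to the paper's handling of the corresponding term. Where you genuinely diverge is the cross term: you prove $\sum_{j,k}\{x^j,n^k\}\{\psi^j,x^k\}=\sum_{j,k}\{x^j,x^k\}\{\psi^j,n^k\}$ by writing out the brackets in local coordinates and contracting over $k$, invoking $P_{bd}=P_{db}$ once more, whereas the paper never leaves the Poisson algebra: it transforms $\{x^j,n^k\}\{f\{x^i,x^j\},x^k\}$ through a chain of Leibniz and Jacobi steps, using only the two normality consequences $\sum_k\{g,x^k\}n^k=0$ and $\sum_k\{x^k,gn^k\}=0$. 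Your version is shorter and makes the geometric mechanism (symmetry of the second fundamental form) maximally visible; the paper's version has the virtue of confining the geometric input to those two bracket identities and doing everything else by pure algebra, which is what lets the authors observe immediately afterwards that the resulting Codazzi--Mainardi identity in $\reals^3$ is an identity for arbitrary Poisson structures and carries over to matrix regularizations, where only Leibniz/Jacobi-type manipulations survive. Your coordinate computation of the cross term would not transfer verbatim to those settings, but as a proof of the proposition as stated it is complete and correct.
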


\begin{proof}
  We start by recalling that for any $g\in C^\infty(\Sigma)$ it holds
  that $\sum_{i=1}^m\{g,x^i\}n^i=0$, since it involves the scalar product
  $\gb(e_a,N)$. Moreover, one also has
  \begin{align*}
    \sum_{k=1}^m\{x^k,n^k\} &= \sum_{k=1}^m\frac{1}{\rho}\eps^{ab}(\d_ax^k)(\d_bn^k)
    =\sum_{k=1}^m\frac{1}{\rho}\eps^{ab}\parab{\d_b\paraa{n^k\d_ax^k}-n^k\d^2_{ab}x^k}\\
    &=-\sum_{k=1}^m\frac{1}{\rho}\eps^{ab}n^k\d^2_{ab}x^k=0,
  \end{align*}
  which implies that $\sum_{k=1}^m\{x^k,gn^k\}=0$ for all $g\in C^\infty(\Sigma)$.
  By using the above identities together with the Jacobi identity, one
  obtains
  \begin{align*}
    \pba{f\{x^i,x^j\}\{x^j,n^k\},x^k} &= 
    f\{x^i,x^j\}\pba{\{x^j,n^k\},x^k}+\{x^j,n^k\}\pba{f\{x^i,x^j\},x^k}\\
    &= -f\{x^i,x^j\}\pba{\{x^k,x^j\},n^k}-n^k\pba{x^j,\{f\{x^i,x^j\},x^k\}}\\
    &= -f\{x^i,x^j\}\pba{\{x^k,x^j\},n^k} + n^k\pba{f\{x^i,x^j\},\{x^k,x^j\}}\\
    &= -f\{x^i,x^j\}\pba{\{x^k,x^j\},n^k} - \{x^k,x^j\}\pba{f\{x^i,x^j\},n^k}\\
    &= \pba{f\{x^i,x^j\}\{x^j,x^k\},n^k}.\qedhere
  \end{align*}
\end{proof}

\noindent Hence, one can rewrite the Codazzi-Mainardi equations for a surface in $\reals^3$ as
\begin{align}\label{eq:CMR3P}
  \sum_{j,k=1}^3\pba{\gamma^{-2}(\P^2)^{ik},n^k} = 0,
\end{align}
and it is straight-forward to show that 
\begin{align*}
  \sum_{i,j,k=1}^3\paraa{\d_cx^i}\pba{\gamma^{-2}(\P^2)^{ik},n^k} = \frac{1}{\rho}\eps^{ab}\nabla_ah_{bc},
\end{align*}
thus reproducing the classical form of the Codazzi-Mainardi equations.

Is it possible to verify (\ref{eq:CMR3P}) directly using only Poisson
algebraic manipulations? It turns out that that the Codazzi-Mainardi
equations in $\reals^3$ is an identity for arbitrary Poisson algebras,
if one assumes that a normal vector is given by
$\frac{1}{2\gamma}\eps_{ijk}\{x^j,x^k\}\d_i$.
\begin{proposition}
  Let $\{\cdot,\cdot\}$ be an arbitrary Poisson structure on
  $C^\infty(\Sigma)$. Given $x^1,x^2,x^3\in C^\infty(\Sigma)$
  it holds that
  \begin{align*}
    \sum_{j,k,l,n=1}^3\frac{1}{2}\eps_{kln}\pba{\gamma^{-2}\{x^i,x^j\}\{x^j,x^k\},\gamma^{-1}\{x^l,x^n\}}=0
  \end{align*}
  for $i=1,2,3$, where
  \begin{align*}
    \gamma^2 = \{x^1,x^2\}^2+\{x^2,x^3\}^2+\{x^3,x^1\}^2.
  \end{align*}
\end{proposition}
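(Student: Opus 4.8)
\noindent The plan is to collapse the quadruple sum in the statement to a single Poisson bracket involving the would-be unit normal $n^k=\tfrac{1}{2\gamma}\eps_{kln}\{x^l,x^n\}$, and then to exploit the fact that $\sum_k(n^k)^2$ is the constant function $1$, so that it Poisson-commutes with everything. Since the coefficients $\tfrac12\eps_{kln}$ are constant and the bracket is bilinear, $\sum_{l,n}\tfrac12\eps_{kln}\,\gamma^{-1}\{x^l,x^n\}=n^k$, so the left-hand side of the proposition equals $\sum_{j,k}\bigl\{\gamma^{-2}\{x^i,x^j\}\{x^j,x^k\},\,n^k\bigr\}$.

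Next I would record two algebraic identities, valid for an arbitrary Poisson bracket, each obtained by contracting Levi-Civita symbols. Writing $P^{ij}=\{x^i,x^j\}$ and using $\gamma^2=\{x^1,x^2\}^2+\{x^2,x^3\}^2+\{x^3,x^1\}^2=\tfrac12\sum_{l,n}(P^{ln})^2$: from $\eps_{kln}\eps_{kl'n'}=\delta_{ll'}\delta_{nn'}-\delta_{ln'}\delta_{nl'}$ one obtains $\sum_k(n^k)^2=1$; and from the Kronecker-delta expansion of $\eps_{ilm}\eps_{kpq}$ one obtains $\sum_jP^{ij}P^{jk}=\gamma^2\bigl(n^in^k-\delta^{ik}\bigr)$. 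The second identity is the purely algebraic version of the earlier fact that $\gamma^{-2}\P^2$ is the tangential projection (equivalently $\delta^{ik}-\gamma^{-2}(\P^2)^{ik}=n^in^k$), but it must be verified directly here, since $\Sigma$ carries only an abstract Poisson structure and need not be an embedded surface. Thus, wherever $\gamma\neq0$, one has $\sum_j\gamma^{-2}\{x^i,x^j\}\{x^j,x^k\}=n^in^k-\delta^{ik}$.

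Substituting this into the collapsed expression, the $\delta^{ik}$-term gives $\sum_k\{-\delta^{ik},n^k\}=0$ because $\delta^{ik}$ is constant, so it remains only to show $\sum_k\{n^in^k,n^k\}=0$. By the Leibniz rule, $\{n^in^k,n^k\}=n^i\{n^k,n^k\}+n^k\{n^i,n^k\}=n^k\{n^i,n^k\}$, hence $\sum_k\{n^in^k,n^k\}=\sum_kn^k\{n^i,n^k\}=\tfrac12\bigl\{n^i,\sum_k(n^k)^2\bigr\}=\tfrac12\{n^i,1\}=0$ by the first identity, which finishes the proof. The only point requiring care is keeping the combinatorial constants in the two $\eps$-contractions consistent with the stated normalization of $\gamma^2$; note that the Jacobi identity is never needed --- only bilinearity, antisymmetry and the Leibniz rule.
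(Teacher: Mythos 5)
Your proof is correct and follows essentially the same route as the paper's: both reduce the sum over $l,n$ to the unit normal components $n^k=\tfrac{1}{2\gamma}\eps_{kln}\{x^l,x^n\}$, and both kill the remaining terms using only the Leibniz rule, antisymmetry, and the normalization $\sum_k(n^k)^2=1$. The only difference is organizational: where the paper carries out an explicit cyclic case analysis in $u,v,w$, you package the same cancellations into the single contraction identity $\sum_j\{x^i,x^j\}\{x^j,x^k\}=\gamma^2\paraa{n^in^k-\delta^{ik}}$, of which the paper's step $\gamma^{-2}\paraa{\{x^u,x^v\}^2+\{x^w,x^u\}^2}=1-\gamma^{-2}\{x^v,x^w\}^2$ is the diagonal special case.
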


\begin{proof}
  Let $u,v,w$ be a cyclic permutation of $1,2,3$. In the following we
  do not sum over repeated indices $u,v,w$. Denoting by $\text{CM}^i$
  the $i$'th component of the Codazzi-Mainardi equation, one has
  \begin{align*}
    &\text{CM}^u = -\pba{\gamma^{-2}\paraa{\{x^u,x^v\}^2+\{x^w,x^u\}^2},\gamma^{-1}\{x^v,x^w\}}\\
    &\quad+\pba{\gamma^{-2}\{x^u,x^v\}\{x^v,x^w\},\gamma^{-1}\{x^u,x^v\}}
    +\pba{\gamma^{-2}\{x^u,x^w\}\{x^w,x^v\},\gamma^{-1}\{x^w,x^u\}}\\
    &\quad= -\pba{1-\gamma^{-2}\{x^v,x^w\}^2,\gamma^{-1}\{x^v,x^w\}}
    +\gamma^{-1}\{x^u,x^v\}\pba{\gamma^{-1}\{x^v,x^w\},\gamma^{-1}\{x^u,x^v\}}\\
    &\quad+\gamma^{-1}\{x^u,x^w\}\pba{\gamma^{-1}\{x^w,x^v\},\gamma^{-1}\{x^w,x^u\}}\\
    &\quad= \frac{1}{2}\pba{\gamma^{-1}\{x^v,x^w\},\gamma^{-2}\paraa{\gamma^2-\{x^v,x^w\}^2}}=0.\qedhere
  \end{align*}
\end{proof}

\noindent Let us end by noting that these results generalize to
arbitrary hypersurfaces in $\reals^{n+1}$. Namely, 
\begin{align*}
  &\{\gamma^{-2}\pba{x^i,\xv^J\}\{\xv^J,n^k\},x^k}_f
  =\{\gamma^{-2}\pba{x^i,\xv^J\}\{\xv^J,x^k\},n^k}_f,\\
  &(\d_cx^i)\pba{\gamma^{-2}\paraa{\P^2}^{ik},n^k}_f=
  -\frac{1}{\rho}\eps^{aba_1\cdots a_{n-2}}\paraa{\nabla_ah_{bc}}\paraa{\d_{a_1}f_1}\cdots\paraa{\d_{a_{n-2}}f_{n-2}},
\end{align*}
and
\begin{align*}
  \eps_{klL}\pba{\gamma^{-2}\{x^i,\xv^J\}\{\xv^J,x^k\},\gamma^{-1}\{x^l,\xv^L\}}_f=0
\end{align*}
for arbitrary $x^1,\ldots,x^{n+1}\in C^\infty(\Sigma)$.

\section{Matrix regularizations}\label{sec:matrixRegularizations}

\noindent In physics, ``fuzzy spaces'' have been used for a long time
to regularize quantum theories and to model non-commutativity,
originating in the study of a quantum theory of surfaces (membranes)
sweeping out 3-manifolds of vanishing mean curvature). The main idea
was to replace smooth functions on a surface by sequences of matrices,
approximating the Poisson algebra of functions with increasing
accuracy as the matrix dimension grows.  Since the expressions for
geometric quantities derived in Section
\ref{sec:nambuPoissonFormulation} uses only the Poisson algebraic
structure of the function algebra, it is natural to study their matrix
analogues in this context.

Let us start by introducing some notation. Let $N_1,N_2,\ldots$ be a
strictly increasing sequence of positive integers and let $\Ta$, for
$\alpha=1,2,\ldots$, be linear maps from $C^\infty(\Sigma)$ to
hermitian $\Na\times\Na$ matrices. Moreover, let
$\hbar:\reals\to\reals$ be a strictly positive decreasing function
such that $\limNinf N\hbar(N)$ converges, and set $\hbara=\hbar(\Na)$.
Introduce the operators
\begin{align*}
  \partial^f(h) = \{f,h\}
\end{align*}
as well as the matrix operators
\begin{align*}
  \dha^f(X) = \frac{1}{i\hbara}[X,\Ta(f)], 
\end{align*}
and write
\begin{align*}
  &\partial^{f_1\cdots f_k}(h) = \partial^{f_1}\partial^{f_2}\cdots\partial^{f_k}(h)\\
  &\dha^{f_1\cdots f_k}(X) = \dha^{f_1}\dha^{f_2}\cdots\dha^{f_k}(X).
\end{align*}
Let us now define what is meant by a matrix regularization of compact
surface.

\begin{definition}
  Let $N_1,N_2,\ldots$ be a strictly increasing sequence of positive
  integers, let $\{\Ta\}$ for $\alpha=1,2,\ldots$ be linear maps from
  $C^\infty(\Sigma,\reals)$ to hermitian $\Na\times \Na$ matrices and
  let $\hbar(N)$ be a real-valued strictly positive decreasing
  function such that $\limNinf N\hbar(N)<\infty$. Furthermore, let
  $\omega$ be a symplectic form on $\Sigma$ and let $\{\cdot,\cdot\}$
  denote the Poisson bracket induced by $\omega$.  

  If for all integers $1\leq l\leq k$, $\{\Ta\}$ has the following
  properties for all $f,f_1,\ldots,f_k,h\in C^\infty(\Sigma)$
  \begin{align}
    &\limainfty\norm{\Ta(f)}<\infty\label{eq:matrixNorm},\\
    &\limainfty\norm{\Ta(fh)-\Ta(f)\Ta(h)}=0,\label{eq:matrixProduct}\\
    &\limainfty\norm{\dha^{f_1\cdots f_l}\paraa{\Ta(f)}-\Ta\paraa{\partial^{f_1\cdots f_l}(f)}}=0\label{eq:matrixCommutator}\\
    &\limainfty 2\pi\hbara\Tr\Ta(f)=\int_\Sigma f \omega,\label{eq:matrixTrace}
  \end{align}
  where $||\cdot||$ denotes the operator norm and
  $\hbar_\alpha=\hbar(\Na)$, then we call the pair $(\Ta,\hbar)$ a
  \emph{$C^k$-convergent matrix regularization of
    $(\Sigma,\omega)$}. If $(\Ta,\ha)$ is $C^k$-convergent for all
  $k\geq 0$ then $(\Ta,\ha)$ is called a \emph{smooth matrix regularization of
    $(\Sigma,\omega)$}.
\end{definition}

\noindent In the following, when we speak of a matrix regularization
without any reference to the degree of convergence, we shall always
mean a $C^1$-convergent matrix regularization.

\begin{remark}
  In some cases, a $C^1$-convergent matrix regularization is
  automatically a smooth matrix regularization. For instance, if it
  holds that for any $f,h\in C^\infty(\Sigma)$ there exists $A_k(f,h)\in
  C^\infty(\Sigma)$ such that
  \begin{align*}
    \frac{1}{i\ha}[\Ta(f),\Ta(h)]=\sum_k c_{k,\alpha}(f,h)\Ta\paraa{A_k(f,h)},
  \end{align*}
  for some $c_{k,\alpha}(f,h)\in\reals$, then $C^k$-convergence implies
  $C^{k+1}$-convergence. The matrix regularizations for the sphere and
  the torus in Section \ref{sec:simpleExamples} both fall into this
  category. Hence, they are examples of smooth matrix
  regularizations. Note that one can easily destroy the smoothness of
  a matrix regularization by slightly deforming it, see Example
  \ref{ex:deformedFuzzyTorus}.
\end{remark}

\begin{definition}
  A sequence $\{\fha\}$ of $N_\alpha\times N_\alpha$ matrices
  \emph{converges to $f$} (or \emph{$C^0$-converges to $f$}) if 
  \begin{align}
    \limainfty\norm{\fha-\Ta(f)}=0.
  \end{align}
  Moreover, for any integer $k\geq 1$, a sequence $\{\fha\}$ of
  $N_\alpha\times N_\alpha$ matrices \emph{$C^k$-converges to $f$} if in addition
  \begin{align*}
    \limainfty\norm{\dha^{f_1\cdots f_l}(\fha)-\Ta\paraa{\partial
^{f_1\cdots f_l}(f)}}=0,
  \end{align*}
  for all $1\leq l\leq k$ and $f_1,\ldots,f_l\in C^\infty(\Sigma)$.
  If $\{\fha\}$ is $C^k$-convergent for all positive $k$ then we say
  that $\{\fha\}$ is a smooth sequence. 
\end{definition}

\begin{remark}
  If the matrix regularization is $C^k$-convergent, it is clear that
  the matrix sequence $\Ta(f)$ is $C^k$-convergent. It is however easy
  to construct, even in a smooth matrix regularization, $C^0$-convergent
  sequences that are not $C^1$-convergent; see Example
  \ref{ex:nonSmoothSequence}.
\end{remark}

\begin{definition}
  A $C^k$-convergent matrix regularization $(\Ta,\hbar)$ is called
  \emph{unital} if the sequence $\{\mid_{\Na}\}$ $C^k$-converges to the constant function $1$.
\end{definition}

\begin{remark}
  Although unital matrix regularizations seem natural, and all our
  examples fall into this category, it is easy to construct examples
  of non-unital matrix regularizations. Namely, let $(\Ta,\hbar)$ be a
  matrix regularization and consider the map $\tilde{T}^\alpha$
  defined by
  \begin{align*}
    \tilde{T}^\alpha(f) = 
    \begin{pmatrix}
      &    &          &  & 0\\
      &    & \Ta(f)   &  & \vdots \\
      &    &          &  & \\
    0 &         & \cdots & & 0 
    \end{pmatrix}.
  \end{align*}
  Then $(\tilde{T}^\alpha,\hbar)$ is a matrix regularization which is not unital, since
  \begin{align*}
    \limainfty\norm{\tilde{T}^\alpha(1)-\mid_{N_\alpha+1}}\geq 1.
  \end{align*}
\end{remark}

\begin{proposition}
  Let $(\Ta,\hbar)$ be a unital matrix regularization. Then
  \begin{align}
    \limainfty 2\pi N_\alpha\hbara = \int_\Sigma\omega.
  \end{align}
\end{proposition}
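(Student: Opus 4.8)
The plan is to combine the trace axiom \eqref{eq:matrixTrace} with the unitality hypothesis by evaluating the trace on the constant function $f=1$. First I would apply \eqref{eq:matrixTrace} with $f\equiv 1$, which gives
\begin{align*}
  \limai 2\pi\hbara\Tr\Ta(1) = \int_\Sigma 1\cdot\omega = \int_\Sigma\omega.
\end{align*}
Next I would use that $\Tr$ is a continuous linear functional with $\abs{\Tr A}\leq N_\alpha\norm{A}$ on $N_\alpha\times N_\alpha$ matrices. Since the regularization is unital, $\norm{\Ta(1)-\mid_{\Na}}\to 0$, and hence
\begin{align*}
  \abs{\hbara\Tr\Ta(1) - \hbara\Tr\mid_{\Na}}
  = \hbara\,\abs{\Tr\paraa{\Ta(1)-\mid_{\Na}}}
  \leq \hbara N_\alpha\norm{\Ta(1)-\mid_{\Na}}.
\end{align*}
Because $\limai N_\alpha\hbara$ is finite (it converges by the assumption $\limNinf N\hbar(N)<\infty$) and $\norm{\Ta(1)-\mid_{\Na}}\to 0$, the right-hand side tends to $0$. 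Therefore $\hbara\Tr\Ta(1)$ and $\hbara\Tr\mid_{\Na} = \hbara N_\alpha$ have the same limit, and multiplying by $2\pi$ yields
\begin{align*}
  \limai 2\pi N_\alpha\hbara = \limai 2\pi\hbara\Tr\Ta(1) = \int_\Sigma\omega,
\end{align*}
which is the claim.

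The only subtle point — and the one I would be most careful about — is justifying that $\limai N_\alpha\hbara$ actually exists (not merely that $\limsup$ is finite), so that the chain of equalities among limits is legitimate rather than merely an inequality between upper and lower limits. This is guaranteed by the standing hypothesis in the definition of a matrix regularization that $\limNinf N\hbar(N)$ converges; since $\hbara=\hbar(\Na)$ and $\Na\to\infty$ along the subsequence $N_1,N_2,\ldots$, the limit $\limai N_\alpha\hbara$ exists and equals this value. Everything else is a routine application of the crude trace bound $\abs{\Tr A}\le (\dim)\norm{A}$, so no genuine obstacle arises beyond bookkeeping the existence of the limit.
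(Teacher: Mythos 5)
Your proof is correct and follows essentially the same route as the paper: apply the trace axiom with $f=1$, write $\Tr\Ta(1)=\Tr\mid_{\Na}+\Tr(\Ta(1)-\mid_{\Na})$, and kill the error term via $\abs{\Tr(\Ta(1)-\mid_{\Na})}\leq N_\alpha\norm{\Ta(1)-\mid_{\Na}}$ together with the boundedness of $N_\alpha\hbara$. Your extra remark about the existence (not just finiteness) of $\limai N_\alpha\hbara$ is a fair point of care, though it is indeed supplied by the standing hypothesis in the definition.
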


\begin{proof}
  Let us use formula (\ref{eq:matrixTrace}) with $f=1$.
  \begin{align*}
    \int_\Sigma\omega &= \limainfty 2\pi\hbara\Tr\Ta(1)
    =\limainfty 2\pi\hbara\Tr\bracketb{\Ta(1)+\mid_{N_\alpha}-\mid_{N_\alpha}}\\
    &= \limainfty\parab{2\pi\hbara N_\alpha + 2\pi\hbara\Tr(\Ta(1)-\mid_{N_\alpha})}
    = \limainfty 2\pi\hbara N_\alpha
  \end{align*}
  since 
  \begin{align*}
    \limainfty \abs{2\pi\hbara\Tr(\Ta(1)-\mid_{N_\alpha})}
    \leq \limainfty 2\pi\hbara N_\alpha\norm{\Ta(1)-\mid_{N_\alpha}} = 0,
  \end{align*}
  due to the fact that the matrix regularization is unital.
\end{proof}

\begin{proposition}\label{prop:arbitrarySequences}
  Let $(\Ta,\ha)$ be a $C^k$-convergent matrix regularization and
  assume that $\fha$ and $\hha$ $C^k$-converge to $f,h\in
  C^\infty(\Sigma)$ respectively. Then it holds that $a\fha+b\hha$
  $C^k$-converges to $af+bh$, for any $a,b\in\reals$, and $\fha\hha$
  $C^k$-converges to $fh$. 
  Furthermore, it holds that
  \begin{align}
    &\limainfty\norm{\fha} = \limainfty\norm{\Ta(f)}\label{eq:multiMatrixNorm}\\
    &\limainfty 2\pi\hbara\Tr\paraa{\fha\hha} = \int_{\Sigma}fh\omega\label{eq:multiMatrixTrace}.
  \end{align}
\end{proposition}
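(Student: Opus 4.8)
The plan is to establish each claim by a routine ``triangle inequality plus known convergence'' argument, peeling off one operation at a time. For linearity, observe that $a\fha + b\hha - \Ta(af+bh) = a(\fha-\Ta(f)) + b(\hha-\Ta(h))$ since $\Ta$ is linear, so the $C^0$ part is immediate; for the higher $C^l$ estimates, $1\le l\le k$, the operator $\dha^{f_1\cdots f_l}$ is linear in its matrix argument, so the same splitting applies term by term and each piece tends to zero by the $C^k$-convergence of $\fha$ and $\hha$. For the product, the key algebraic identity to write down first is
\begin{align*}
  \fha\hha - \Ta(fh) = (\fha-\Ta(f))\hha + \Ta(f)(\hha-\Ta(h)) + \paraa{\Ta(f)\Ta(h)-\Ta(fh)},
\end{align*}
whose three terms are controlled respectively by $\norm{\fha-\Ta(f)}\cdot\norm{\hha}$ (the second factor bounded via \eqref{eq:matrixNorm} and the $C^0$-convergence of $\hha$), by $\norm{\Ta(f)}\cdot\norm{\hha-\Ta(h)}$, and by \eqref{eq:matrixProduct}. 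This gives the $C^0$-convergence of $\fha\hha$ to $fh$.

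For the $C^l$-convergence of the product one needs a discrete Leibniz rule for $\dha^g$. The main point is that $\dha^g(XY) = \dha^g(X)\,Y + X\,\dha^g(Y)$, since $\dha^g$ is $\frac{1}{i\ha}[\,\cdot\,,\Ta(g)]$ and the commutator is a derivation; iterating, $\dha^{f_1\cdots f_l}(\fha\hha)$ expands into a sum over the $2^l$ ways of distributing the operators $\dha^{f_1},\ldots,\dha^{f_l}$ across the two factors (more precisely, a sum over ordered subsequences). On the continuous side $\partial^g$ is likewise a derivation of the pointwise product, so $\partial^{f_1\cdots f_l}(fh)$ has the same combinatorial expansion. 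Matching the two expansions term by term, each matrix term $\dha^{S}(\fha)\,\dha^{S'}(\hha)$ converges to $\Ta\paraa{\partial^{S}(f)}\Ta\paraa{\partial^{S'}(h)}$ by the $C^k$-convergence hypotheses (using \eqref{eq:matrixNorm} to bound the leftover factor, exactly as in the $C^0$ case), and then to $\Ta\paraa{\partial^S(f)\cdot\partial^{S'}(h)}$ by \eqref{eq:matrixProduct}; summing, one obtains $\Ta$ applied to the Leibniz expansion of $\partial^{f_1\cdots f_l}(fh)$. Hence $\fha\hha$ $C^k$-converges to $fh$.

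The norm identity \eqref{eq:multiMatrixNorm} follows from the reverse triangle inequality $\bigl|\,\norm{\fha}-\norm{\Ta(f)}\,\bigr|\le\norm{\fha-\Ta(f)}\to 0$. For the trace identity \eqref{eq:multiMatrixTrace}, first note $\fha\hha$ $C^0$-converges to $fh$ by the above, so it suffices to show $\limai 2\pi\ha\Tr(\fha\hha) - 2\pi\ha\Tr\Ta(fh) \to 0$; bounding $\abs{2\pi\ha\Tr(\fha\hha-\Ta(fh))}\le 2\pi\ha N_\alpha\norm{\fha\hha-\Ta(fh)}$ and using that $\ha N_\alpha$ is bounded (it converges, by the hypothesis $\limNinf N\hbar(N)<\infty$) kills this term, and then \eqref{eq:matrixTrace} applied to $fh$ gives $\int_\Sigma fh\,\omega$. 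The only mildly delicate point — and the one I would write out most carefully — is the bookkeeping of the iterated Leibniz expansion for $\dha^{f_1\cdots f_l}$ and the verification that it matches the expansion of $\partial^{f_1\cdots f_l}$ on the nose; everything else is a mechanical application of the triangle inequality together with \eqref{eq:matrixNorm}, \eqref{eq:matrixProduct}, \eqref{eq:matrixCommutator}, \eqref{eq:matrixTrace} and the boundedness of $N_\alpha\ha$.
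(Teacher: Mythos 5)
Your proposal is correct and follows essentially the same route as the paper: the same three-term decomposition (up to a trivial regrouping) for the $C^0$ product estimate, the same reverse-triangle-inequality and $\hbara N_\alpha$-boundedness arguments for \eqref{eq:multiMatrixNorm} and \eqref{eq:multiMatrixTrace}, and the Leibniz rule for $\dha^g$ to handle higher derivatives. The only cosmetic difference is that the paper packages your explicit $2^l$-term Leibniz expansion as an induction on the order of convergence, applying $\dha^{f_1}(\fha\hha)=(\dha^{f_1}\fha)\hha+\fha\dha^{f_1}\hha$ once per step, which spares the term-by-term bookkeeping you flag as the delicate point.
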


\begin{proof}
  The fact that $a\fh+b\hh$ $C^k$-converges to $af+bh$ follows
  directly from linearity of the maps $\Ta$. To prove (\ref{eq:multiMatrixNorm}) one
  uses the reverse triangle inequality to deduce
  \begin{align*}
    \limainfty\left| ||\fha||-\norm{\Ta(f)}\right|
    \leq \limainfty\norm{\fha-\Ta(f)}=0,
  \end{align*}
  since $\fha$ is assumed to converge to $f$. Let us continue
  by proving that $\fha\hha$ $C^0$-converges to $fh$, i.e.
  \begin{align*}
    &\limainfty\norm{\fha\hha-\Ta(fh)} = 
    \limainfty\norm{\fha\hha-\fha\Ta(h)+\fha\Ta(h)-\Ta(fh)}\\
    &\leq\limainfty\Big(\norm{\fha}\norm{\hha-\Ta(h)}
    +\norm{\fha\Ta(h)-\Ta(f)\Ta(h)+\Ta(f)\Ta(h)-\Ta(fh)}\Big)\\
    &\leq\limainfty\Big(
    \norm{\fha}\norm{\hha-\Ta(h)}
    +\norm{\fha-\Ta(f)}\norm{\Ta(h)}
    +\norm{\Ta(f)\Ta(h)-\Ta(fh)}
    \Big)\\
    &=0,
  \end{align*}
  since both $\{\fha\}$ and $\{\hha\}$ are $C^0$-convergent sequences
  and $||\fha||$ is bounded by (\ref{eq:multiMatrixNorm}). Using the
  face that $\fha\hha$ $C^0$-converges to $fg$, it is easy to prove
  (\ref{eq:multiMatrixTrace}) by computing
  \begin{align*}
    \limainfty 2\pi\hbara\Tr\fha\hha &= 
    \limainfty 2\pi\hbara\Tr\paraa{\fha\hha-\Ta(fh)+\Ta(fh)}\\
    &=\limainfty 2\pi\hbara\Tr\Ta(fh)) = \int_\Sigma fh\omega.
  \end{align*}
  Finally, we proceed by induction to show that $\fha\hha$
  $C^k$-converges to $fh$. Thus, assume that, for some $0\leq l<k$,
  $\uha\vha$ $C^l$-converges to $uv$ whenever $\uha$ and $\vha$
  $C^l$-converges to $u$ and $v$ respectively. Since
  \begin{align*}
    \dha^{f_1}(\fha\hha) = \paraa{\dha^{f_1}\fha}\hha+\fha\dha^{f_1}\hha
  \end{align*}
  we can use the induction hypothesis (together with the assumption
  that $\fha,\hha$ $C^{k>l}$-converges) to conclude that
  $\dha^{f_1}(\fha\hha)$ $C^l$-converges, which implies that
  $\fha\hha$ $C^{l+1}$-converges. Hence, it follows that
  $\fha\hha$ $C^k$-converges to $fh$.
\end{proof}

\noindent The above result allows one to easily construct sequences of
matrices converging to any sum of products of functions and Poisson
brackets. Namely, simply substitute for every factor in every term of
the sum, a sequence converging to that function, where Poisson
brackets of functions may be replaced by commutators of
matrices. Proposition \ref{prop:arbitrarySequences} then guarantees
that the matrix sequence obtained in this way converges to the sum of
the products of the corresponding functions, as long as the
appropriate level of convergence is assumed.

\begin{proposition}
  Let $(\Ta,\hbar)$ be a matrix regularization and let $\{\fha\}$ be a
  sequence converging to $f$. Then $\limainfty||\fha||=0$ if and only if $f=0$. 
\end{proposition}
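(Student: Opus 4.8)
The plan is to prove both directions of the equivalence, with the nontrivial direction being that $\limainfty\norm{\fha}=0$ implies $f=0$.

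First I would dispose of the easy direction: if $f=0$, then since $\{\fha\}$ converges to $f=0$ we have $\limainfty\norm{\fha-\Ta(0)}=0$, and $\Ta(0)=0$ by linearity of $\Ta$, so $\limainfty\norm{\fha}=0$ immediately. By the same token (using the reverse triangle inequality, or directly equation \eqref{eq:multiMatrixNorm} from Proposition \ref{prop:arbitrarySequences}), $\limainfty\norm{\fha}=0$ if and only if $\limainfty\norm{\Ta(f)}=0$, so it suffices to show that $\limainfty\norm{\Ta(f)}=0$ forces $f=0$.

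The key step is to exploit the trace axiom \eqref{eq:matrixTrace} together with the product axiom \eqref{eq:matrixProduct}. The idea is that $\norm{\Ta(f)}\to 0$ controls $\Tr\bracket{\Ta(f)^2}$, hence by \eqref{eq:matrixProduct} and \eqref{eq:matrixTrace} controls $\int_\Sigma f^2\,\omega$. Concretely: for a hermitian $N_\alpha\times N_\alpha$ matrix one has $\abs{\Tr\bracket{\Ta(f)\Ta(f)}}\leq N_\alpha\norm{\Ta(f)}^2$. Multiplying by $2\pi\hbara$ and using that $2\pi N_\alpha\hbara$ is bounded (it converges, since $\limNinf N\hbar(N)<\infty$), we get $\limai 2\pi\hbara\abs{\Tr\bracket{\Ta(f)^2}}\leq \limai\paraa{2\pi N_\alpha\hbara}\norm{\Ta(f)}^2 = 0$. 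On the other hand, by Proposition \ref{prop:arbitrarySequences} (equation \eqref{eq:multiMatrixTrace} applied with $\fha=\hha=\Ta(f)$, which converges to $f$), $\limai 2\pi\hbara\Tr\bracket{\Ta(f)^2}=\int_\Sigma f^2\,\omega$. Hence $\int_\Sigma f^2\,\omega=0$, and since $\omega$ is a symplectic (in particular nowhere-vanishing, volume) form on the surface $\Sigma$ and $f^2\geq 0$ is continuous, this forces $f\equiv 0$.

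The main obstacle — really the only subtle point — is making sure the boundedness of $2\pi N_\alpha\hbara$ is available and correctly invoked; this follows from the standing assumption that $\limNinf N\hbar(N)$ converges (so in particular the sequence $N_\alpha\hbara$ is bounded). One should also take a moment to confirm that \eqref{eq:multiMatrixTrace} genuinely applies here: the sequence $\{\Ta(f)\}$ does $C^0$-converge to $f$ (trivially, $\norm{\Ta(f)-\Ta(f)}=0$), so Proposition \ref{prop:arbitrarySequences} gives $\limai 2\pi\hbara\Tr\paraa{\Ta(f)\Ta(f)}=\int_\Sigma f^2\,\omega$ as required. Everything else is a routine estimate, so the proof is short.
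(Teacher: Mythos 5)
Your proof is correct and follows essentially the same route as the paper: the easy direction via linearity of $\Ta$, and the hard direction by combining the trace identity $\limai 2\pi\hbara\Tr(\fha\hha)=\int_\Sigma fh\,\omega$ from Proposition \ref{prop:arbitrarySequences} with the estimate $\abs{\Tr A}\leq N_\alpha\norm{A}$ and the boundedness of $N_\alpha\hbara$ to conclude $\int_\Sigma f^2\omega=0$. The only cosmetic difference is that you first pass from $\fha$ to $\Ta(f)$ via \eqref{eq:multiMatrixNorm}, whereas the paper runs the same estimate directly on $\fha$.
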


\begin{proof}
  From Proposition \ref{prop:arbitrarySequences} it follows directly
  that if $\fha$ converges to $0$ then
  \begin{align*}
    \limainfty||\fha||=\limainfty||\Ta(0)||=0.
  \end{align*}
  Now, assume that $\limainfty||\fha||=0$. Then it holds that
  \begin{align*}
    \int f^2\omega = \limainfty 2\pi\ha\Tr\fha^2
    \leq \limainfty 2\pi\ha\Na||\fha^2||
    \leq \limainfty 2\pi\ha\Na||\fha||^2=0,
  \end{align*}
  from which we conclude that $f=0$.
\end{proof}

\begin{proposition}\label{prop:uniqueNormZero}
  Let $(\Ta,\hbar)$ be a matrix regularization and assume that
  $\{\fha\}$ $C^k$-converges to $f$. Then $\{\fha^\dagger\}$
  $C^k$-converges to $f$.
\end{proposition}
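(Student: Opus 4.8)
The plan is to exploit two structural facts that are built into the setup: the map $\Ta$ sends real-valued functions to hermitian matrices, so $\Ta(g)^\dagger=\Ta(g)$ for every $g\in C^\infty(\Sigma,\reals)$; and the operator norm is invariant under taking the hermitian adjoint, $\norm{A^\dagger}=\norm{A}$. Since $f$ is real, $\Ta(f)$ is hermitian, and these two facts immediately dispatch $C^0$-convergence:
\begin{align*}
  \norm{\fha^\dagger-\Ta(f)} = \norm{\paraa{\fha-\Ta(f)}^\dagger} = \norm{\fha-\Ta(f)}\longrightarrow 0 .
\end{align*}

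For the higher-order conditions the key observation I would establish is that each operator $\dha^g$ intertwines with the adjoint. For any matrix $X$, using that $\ha$ is real and $\Ta(g)$ is hermitian,
\begin{align*}
  \paraa{\dha^g(X)}^\dagger = \parac{\frac{1}{i\ha}[X,\Ta(g)]}^\dagger
  = \frac{1}{-i\ha}\bracketa{\Ta(g),X^\dagger}
  = \frac{1}{i\ha}\bracketa{X^\dagger,\Ta(g)} = \dha^g(X^\dagger).
\end{align*}
Iterating over a string $f_1,\ldots,f_l$ then gives $\paraa{\dha^{f_1\cdots f_l}(X)}^\dagger=\dha^{f_1\cdots f_l}(X^\dagger)$ for all $X$.

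Finally I would combine the pieces. Because the Poisson bracket of real-valued functions is again real-valued, the iterated bracket $\partial^{f_1\cdots f_l}(f)$ is a real function, hence $\Ta\paraa{\partial^{f_1\cdots f_l}(f)}$ is hermitian. Therefore, for each $1\leq l\leq k$,
\begin{align*}
  \norm{\dha^{f_1\cdots f_l}(\fha^\dagger)-\Ta\paraa{\partial^{f_1\cdots f_l}(f)}}
  &= \norm{\paraa{\dha^{f_1\cdots f_l}(\fha)}^\dagger-\Ta\paraa{\partial^{f_1\cdots f_l}(f)}^\dagger}\\
  &= \norm{\paraa{\dha^{f_1\cdots f_l}(\fha)-\Ta\paraa{\partial^{f_1\cdots f_l}(f)}}^\dagger}\\
  &= \norm{\dha^{f_1\cdots f_l}(\fha)-\Ta\paraa{\partial^{f_1\cdots f_l}(f)}}\longrightarrow 0 ,
\end{align*}
the last limit being the hypothesis that $\{\fha\}$ $C^k$-converges to $f$. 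Since this holds for every admissible $l$, together with the $C^0$ estimate it shows $\{\fha^\dagger\}$ $C^k$-converges to $f$.

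There is no serious obstacle in this argument; the only place that demands a little care is tracking the two sign changes that arise when pulling the adjoint through the scalar $1/(i\ha)$ and through the commutator $[\,\cdot\,,\,\cdot\,]$ — they cancel, which is exactly what makes $\dha^g$ commute with $\dagger$. It is also worth stating explicitly at the outset why $f$, and each of its iterated brackets, is real-valued, since that is precisely what renders the comparison matrices hermitian and lets the adjoint be absorbed.
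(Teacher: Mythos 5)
Your argument is correct and coincides with the paper's own proof: both rest on $\norm{A}=\norm{A^\dagger}$, the fact that $\dha^{f_1\cdots f_l}$ commutes with the hermitian adjoint, and the hermiticity of $\Ta$ applied to the (real-valued) iterated brackets. You simply spell out the intertwining and reality checks that the paper leaves implicit.
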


\begin{proof}
  Due to the fact that $||A||=||A^\dagger||$ one sees that
  \begin{align*}
    \limainfty&\norm{\dha^{f_1\cdots f_k}(\fha^\dagger)-\Ta\paraa{\partial^{f_1\cdots f_k}(f)}}
    =\limainfty\norm{\dha^{f_1\cdots f_k}(\fha^\dagger)^\dagger-\Ta\paraa{\partial^{f_1\cdots f_k}(f)}}\\
    &=\limainfty\norm{\dha^{f_1\cdots f_k}(\fha)-\Ta\paraa{\partial^{f_1\cdots f_k}(f)}}=0,
  \end{align*}
  since $\{\fha\}$ $C^k$-converges to $f$.
\end{proof}

\begin{proposition}
  Let $(\Ta,\hbar)$ be a unital matrix regularization and assume that
  $f$ is a nowhere vanishing function and that $\{\fha\}$
  $C^k$-converges to $f$. If $\fha^{-1}$ exists and
  $||\fha^{-1}||$ is uniformly bounded for all $\alpha$, then
  $\{\fha^{-1}\}$ $C^k$-converges to $1/f$.
\end{proposition}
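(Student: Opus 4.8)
The plan is to reduce the claim to the already-established multiplicative property of $C^k$-convergence (Proposition \ref{prop:arbitrarySequences}) by exhibiting $\{\fha^{-1}\}$ as a $C^k$-convergent sequence whose product with $\{\fha\}$ tends to the constant function $1$, and then invoking uniqueness of the limit. First I would prove $C^0$-convergence: since the regularization is unital, $\{\mid_{\Na}\}$ $C^0$-converges to $1$, and by Proposition \ref{prop:arbitrarySequences} so does $\{\fha\Ta(1/f)\}$ composed appropriately; more directly, write
\begin{align*}
  \fha^{-1}-\Ta(1/f) = \fha^{-1}\paraa{\Ta(1)-\fha\Ta(1/f)} + \fha^{-1}\paraa{\Ta(1)-\mid_{\Na}}\cdot(\text{correction}),
\end{align*}
and estimate the operator norm using $\norm{\fha\Ta(1/f)-\Ta(f\cdot 1/f)}=\norm{\fha\Ta(1/f)-\Ta(1)}\to 0$ (which follows from $\fha\to f$, Proposition \ref{prop:arbitrarySequences}, and (\ref{eq:matrixProduct})), together with the uniform bound on $\norm{\fha^{-1}}$ and unitality $\norm{\Ta(1)-\mid_{\Na}}\to 0$. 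This gives $\norm{\fha^{-1}-\Ta(1/f)}\to 0$.

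Next I would handle the higher-order (commutator) conditions by induction on $k$. The key algebraic identity is the derivation rule applied to $\fha\fha^{-1}=\mid_{\Na}$: for any $f_1$,
\begin{align*}
  \dha^{f_1}\paraa{\fha^{-1}} = -\fha^{-1}\paraa{\dha^{f_1}\fha}\fha^{-1},
\end{align*}
which holds because $\dha^{f_1}$ is (up to the factor $1/i\ha$) a commutator, hence a derivation, and $\dha^{f_1}(\mid_{\Na})=0$. Since $\{\fha\}$ is $C^k$-convergent, $\{\dha^{f_1}\fha\}$ is $C^{k-1}$-convergent (to $\partial^{f_1}(f)=\{f_1,f\}$, reading off from the definition), and by the inductive hypothesis $\{\fha^{-1}\}$ is $C^{k-1}$-convergent to $1/f$ with $\norm{\fha^{-1}}$ uniformly bounded; Proposition \ref{prop:arbitrarySequences} then shows the right-hand side $C^{k-1}$-converges, to $-(1/f)\{f_1,f\}(1/f) = -f^{-2}\{f_1,f\} = \{f_1, 1/f\} = \partial^{f_1}(1/f)$. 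Iterating $\dha^{f_2},\ldots,\dha^{f_l}$ in the same way (each application produces, via the derivation rule, a sum of products of lower-order convergent sequences, all with uniformly bounded norms once the $C^0$ bound on $\fha^{-1}$ is in hand) yields that $\{\fha^{-1}\}$ satisfies all the defining estimates of $C^k$-convergence to $1/f$.

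The main obstacle is bookkeeping the uniform norm bounds through the induction: at each order one must know that every factor appearing (powers of $\fha^{-1}$, the matrices $\dha^{f_i}\fha$, and their iterated commutators) has operator norm bounded uniformly in $\alpha$. The bound on $\norm{\fha^{-1}}$ is given by hypothesis; the bounds on $\norm{\dha^{f_1\cdots f_j}\fha}$ follow from $C^k$-convergence of $\{\fha\}$ together with (\ref{eq:matrixNorm}) and the reverse-triangle estimate (\ref{eq:multiMatrixNorm}) in Proposition \ref{prop:arbitrarySequences}; so the induction closes, but one should state explicitly that these intermediate sequences are norm-bounded before invoking Proposition \ref{prop:arbitrarySequences}, since that proposition's conclusions about products presuppose convergence of the factors (which in turn gives boundedness). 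A minor point worth a remark: the identity $\dha^{f_1}(\fha^{-1})=-\fha^{-1}(\dha^{f_1}\fha)\fha^{-1}$ requires $\fha$ invertible, which is assumed, and no unitality beyond what is needed for the $C^0$ step.
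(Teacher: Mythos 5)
Your proposal is correct and follows essentially the same route as the paper: the $C^0$ step inserts $\Ta(1)$ and uses unitality, the uniform bound on $\norm{\fha^{-1}}$, and $\norm{\fha\Ta(1/f)-\Ta(1)}\to 0$; the higher orders then follow by induction from the identity $[\fha^{-1},\Ta(h)]=-\fha^{-1}[\fha,\Ta(h)]\fha^{-1}$ together with Proposition \ref{prop:arbitrarySequences}. Your explicit check that the limit of the right-hand side is $\partial^{h}(1/f)=-f^{-2}\{h,f\}$ is a worthwhile addition that the paper leaves implicit, but the argument is the same.
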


\begin{proof}
  Let us first show that $\fha^{-1}$ $C^0$-converges to $1/f$;
  one calculates
  \begin{align*}
    \limainfty&\norm{\fha^{-1}-\Ta(1/f)}
    \leq \limainfty\norm{\fha^{-1}}\norm{\mid_{N_\alpha}-\fha\Ta(1/f)}\\
    &=\limainfty\norm{\fha^{-1}}\norm{\mid_{N_\alpha}-\fha\Ta(1/f)+\Ta(1)-\Ta(1)}\\
    &\leq\limainfty\norm{\fha^{-1}}\parab{\norm{\mid_{N_\alpha}-\Ta(1)}
      +\norm{\fha\Ta(1/f)-\Ta(1)}}\\
    &=0,
  \end{align*}
  since the matrix regularization is unital and $||\fha^{-1}||$ is
  assumed to be uniformly bounded. Let us now proceed by induction and
  assume that $\fha^{-1}$ is $C^l$-convergent ($0\leq l<k$). For arbitrary
  $h\in C^\infty(\Sigma)$ it holds that
  \begin{align*}
    [\fha^{-1},\Ta(h)] = -\fha^{-1}[\fha,\Ta(h)]\fha^{-1},
  \end{align*}
  and since $\fha$ is $C^k$-convergent, the above sequence is
  $C^l$-convergent by Proposition \ref{prop:arbitrarySequences} which
  implies that $\fha^{-1}$ is $C^{l+1}$-convergent. Hence, it follows
  by induction that $\fha^{-1}$ is $C^k$-convergent. 
\end{proof}

\subsection{Discrete curvature and the Gauss-Bonnet theorem}\label{sec:discreteGB}

\noindent Let us now consider a surface $\Sigma$ embedded in
$M$ via the embedding coordinates $x^1,\ldots,x^m$, with a symplectic form
\begin{align*}
  \omega = \rho(u^1,u^2)du^1\wedge du^2,
\end{align*}
inducing the Poisson bracket
$\{f,h\}=\frac{1}{\rho}\eps^{ab}(\d_af)(\d_b h)$, and let $(\Ta,\ha)$
be a matrix regularization of $(\Sigma,\omega)$. Furthermore, we let
$\{\gammah_\alpha\}$ be a $C^2$-convergent sequence converging to
$\gamma=\sqrt{g}/\rho$ (and we assume that $\{\gammah_\alpha^{-1}\}$
exists and converges to $1/\gamma$), and we set $\Xa^i=\Ta(x^i)$ as well
as $\NAa^i=\Ta(n_A^i)$ for $i=1,\ldots,m$. Moreover, given the metric
$\gb_{ij}$ and the Christoffel symbols $\Gammab^{i}_{jk}$ of $M$, we
let $\{\Gh_{ij,\alpha}\}$ and $\{\Gammah^{i}_{jk,\alpha}\}$ denote
sequences converging to $\gb_{ij}$ and $\Gamma^{i}_{jk}$
respectively. To avoid excess of notation, we shall often suppress the
index $\alpha$ whenever all matrices are considered at a fixed (but
arbitrary) $\alpha$.

Since most formulas in Section \ref{sec:nambuPoissonFormulation} are
expressed in terms of the tensors $\P^i_j$ and $(\SA)^i_j$ (in the
case of surfaces), we introduce their matrix analogues
\begin{align*}
  \Ph^i_j &= \frac{1}{i\hbar}[X^i,X^{j'}]\Gh_{j'j}\\
  (\ShA)^i_j &= \frac{1}{i\hbar}[X^i,N_A^{j'}]\Gh_{j'j}+
  \frac{1}{i\hbar}[X^j,X^k]\Gammah^{j'}_{kl}N_A^l\Gh_{j'j},
\end{align*}
as well as their squares
\begin{align*}
  (\Ph^2)^i_j = (\Ph^i_k)^\dagger\Ph^k_j
  \quad\text{and}\quad
  (\ShA^2)^i_j=(\ShA{}^i_k)^\dagger\ShA{}^k_j,
\end{align*}
and corresponding trace
\begin{align*}
  \trh\Ph^2 =\sum_{i=1}^m(\Ph^2)^i_i
  \quad\text{and}\quad
  \trh\ShA^2 =\sum_{i=1}^m(\ShA^2)^i_i.
\end{align*}
(The ordinary trace of a matrix $X$ will be denoted by $\Tr X$.)  From
Proposition \ref{prop:arbitrarySequences} it follows that one can
easily construct matrix sequences converging to the geometric objects
in Section \ref{sec:nambuPoissonFormulation}, as long as the
appropriate type of convergence is assumed. Let us illustrate this by
investigating matrix sequences related to the curvature of $\Sigma$
and the Gauss-Bonnet theorem.

\begin{definition}
  Let $(\Ta,\hbar)$ be a matrix regularization of $(\Sigma,\omega)$,
  let $K$ be the Gaussian curvature of $\Sigma$ and let $\chi$ be the
  Euler characteristic of $\Sigma$.. A \emph{Discrete Curvature of
    $\Sigma$} is a matrix sequence $\{\Kh_1,\Kh_2,\Kh_3,\ldots\}$
  converging to $K$, and a \emph{Discrete Euler Characteristic of
    $\Sigma$} is a sequence $\{\chih_1,\chih_2,\chih_3,\ldots\}$ such
  that $\displaystyle\limainfty\chih_\alpha=\chi$.
\end{definition}

\noindent From the classical Gauss-Bonnet theorem, it is immediate to derive a
discrete analogue for matrix regularizations.

\begin{theorem}\label{thm:discreteEuler}
  Let $(\Ta,\hbar)$ be a matrix regularization of $(\Sigma,\omega)$, and let
  $\{\Kh_1,\Kh_2,\ldots\}$ be a discrete curvature of $\Sigma$. Then the sequence
  $\chih_1,\chih_2,\ldots$ defined by
  \begin{align}
    \chih_{\alpha} = \hbara\Tr\bracketb{\gammah_\alpha\Kh_{\alpha}},
  \end{align}
  is a discrete Euler characteristic of $\Sigma$.
\end{theorem}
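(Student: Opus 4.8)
The plan is to reduce the statement to the classical Gauss--Bonnet theorem via the trace asymptotics recorded in Proposition \ref{prop:arbitrarySequences}. First I would observe that the hypotheses needed for that proposition are met at the level $C^0$: by convention a matrix regularization is $C^1$-convergent, hence in particular $C^0$-convergent; the sequence $\{\gammaha\}$ converges to $\gamma=\sqrt{g}/\rho$ by assumption (it is even $C^2$-convergent), and $\{\Kha\}$ converges to $K$ since it is, by definition, a discrete curvature of $\Sigma$. Applying Proposition \ref{prop:arbitrarySequences} to the product of these two $C^0$-convergent sequences then gives both that $\{\gammaha\Kha\}$ $C^0$-converges to $\gamma K$ and, via formula (\ref{eq:multiMatrixTrace}), that
\begin{align*}
  \limainfty 2\pi\hbara\Tr\paraa{\gammaha\Kha} = \int_\Sigma \gamma K\,\omega.
\end{align*}

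Next I would identify the right-hand side with the total Gaussian curvature of $\Sigma$. Writing the symplectic form in local coordinates as $\omega=\rho\,du^1\wedge du^2$, one has $\gamma\,\omega=(\sqrt{g}/\rho)\,\rho\,du^1\wedge du^2=\sqrt{g}\,du^1\wedge du^2$, which is exactly the Riemannian area element $dA$ of $(\Sigma,g)$. Hence $\int_\Sigma\gamma K\,\omega=\int_\Sigma K\,dA$, and since $\Sigma$ is a closed surface the classical Gauss--Bonnet theorem yields $\int_\Sigma K\,dA=2\pi\chi(\Sigma)=2\pi\chi$.

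Combining the two steps, $\limainfty 2\pi\hbara\Tr(\gammaha\Kha)=2\pi\chi$, that is, $\limainfty\hbara\Tr\bracketb{\gammaha\Kha}=\chi$. By the definition of $\chih_\alpha$ this is precisely $\limainfty\chih_\alpha=\chi$, so $\{\chih_1,\chih_2,\ldots\}$ is a discrete Euler characteristic of $\Sigma$.

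I do not anticipate a genuine obstacle here; the statement is essentially a packaging of earlier results together with Gauss--Bonnet. The only points deserving a sentence of care are: (i) verifying that both factor sequences $\{\gammaha\}$ and $\{\Kha\}$, as well as the ambient regularization, are convergent at the ($C^0$) level that Proposition \ref{prop:arbitrarySequences} and formula (\ref{eq:multiMatrixTrace}) require; and (ii) the normalization bookkeeping — the factor $2\pi$ appearing in the trace axiom (\ref{eq:matrixTrace}) is exactly the $2\pi$ in Gauss--Bonnet, which is why the definition of $\chih_\alpha$ carries no $2\pi$ and nonetheless converges to $\chi$ rather than to a multiple of it.
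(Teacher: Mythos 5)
Your proposal is correct and follows essentially the same route as the paper's proof: both apply Proposition \ref{prop:arbitrarySequences} (specifically the trace formula (\ref{eq:multiMatrixTrace})) to the product sequence $\{\gammah_\alpha\Kh_\alpha\}$, identify $\gamma\,\omega$ with the Riemannian area element $\sqrt{g}\,du^1\wedge du^2$, and conclude by the classical Gauss--Bonnet theorem. Your explicit verification of the $C^0$-convergence hypotheses and of the $2\pi$ bookkeeping is a slightly more careful rendering of the same argument.
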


\begin{proof}
  To prove the statement, we compute $\limainfty \chih_\alpha$ and show that it is equal to $\chi(\Sigma)$. Thus
  \begin{align*}
    \limainfty\chih_\alpha &= \limainfty\frac{1}{2\pi}2\pi\hbara\Tr\bracketb{\gammah_\alpha\Kh_{\alpha}},
  \end{align*}
  and by using Proposition \ref{prop:arbitrarySequences} we can write
  \begin{align*}
    \limainfty\chih_\alpha &= \frac{1}{2\pi}\int_{\Sigma}K\frac{\sqrt{g}}{\rho}\omega
    =\frac{1}{2\pi}\int_{\Sigma}K\frac{\sqrt{g}}{\rho}\rho dudv =
    \frac{1}{2\pi}\int_{\Sigma}K\sqrt{g}dudv = \chi(\Sigma),
  \end{align*}
  where the last equality is the classical Gauss-Bonnet theorem.
\end{proof}

\begin{theorem}\label{thm:discreteCurvature}
  Let $(\Ta,\hbar)$ be a unital matrix regularization of
  $(\Sigma,\omega)$ and let $\hat{R}_{ijkl}$, for each
  $i,j,k,l=1,\ldots,m$, be a sequence converging to the component of
  the curvature tensor of $M$. Then the sequence $\Kh$
  defined by
  \begin{align*}
    \Kh = \gammah^{-4}(\Ph^2)^{ik}(\Ph^2)^{jl}\hat{R}_{ijkl}
    -\frac{1}{2}\sum_{A=1}^p\paraa{\gammah^\dagger}^{-1}\paraa{\trh\ShA^2}\gammah^{-1},
  \end{align*}
  is a discrete curvature of $\Sigma$. Thus, a discrete Euler
  characteristic is given by
  \begin{align}
    \chih = \hbar\Tr\paraa{\gammah^{-3}(\Ph^2)^{ik}(\Ph^2)^{jl}\hat{R}_{ijkl}}
    -\frac{\hbar}{2}\sum_{A=1}^p\Tr\bracketb{\gammah^{-1}\trh\ShA^2}.
  \end{align}
\end{theorem}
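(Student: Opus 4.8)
The plan is to recognize $\{\Kh_\alpha\}$ as a finite $\reals$-linear combination of products of matrix sequences, each $C^0$-convergent to one of the functions appearing in the closed-form expression for the Gaussian curvature of a surface proved in Section~\ref{sec:surfaces}, and then to invoke Proposition~\ref{prop:arbitrarySequences}, which guarantees that sums and products of convergent sequences converge to the sum and product of the limits. Recall that for a surface
\begin{align*}
  K = \frac{1}{2\gamma^4}\paraa{\P^2}^{ik}\paraa{\P^2}^{jl}\Rb_{ijkl}-\frac{1}{2\gamma^2}\sum_{A=1}^p\Tr\S_A^2 ,
\end{align*}
so the content of the theorem is that $\Kh_\alpha$ is, term by term, the image of this expression under the substitution rule functions $\mapsto$ convergent matrix sequences, Poisson brackets $\mapsto$ scaled commutators, $\gamma^{-1}\mapsto\gammah_\alpha^{-1}$.

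First I would verify convergence of the building blocks. By hypothesis $\hat R_{ijkl}\to\Rb_{ijkl}$ and $\gammah_\alpha^{-1}\to 1/\gamma$; Proposition~\ref{prop:uniqueNormZero} shows that the adjoint sequence $\gammah_\alpha^\dagger$ also converges to $\gamma$, and since $\gamma=\sqrt g/\rho$ is nowhere vanishing and the regularization is unital, the earlier proposition on convergence of inverse sequences gives $(\gammah_\alpha^\dagger)^{-1}\to 1/\gamma$ as well; hence every power $\gammah_\alpha^{-k}$ and the factor $(\gammah_\alpha^\dagger)^{-1}$ occurring in the formula is convergent. For the bracket data, property \eqref{eq:matrixCommutator} with $l=1$ yields $\frac{1}{i\hbar_\alpha}[X^i,X^{j'}]=\dha^{x^{j'}}(\Ta(x^i))\to\partial^{x^{j'}}(x^i)$ and, in the same way, $\frac{1}{i\hbar_\alpha}[X^i,N_A^{j'}]\to\partial^{n_A^{j'}}(x^i)$, while $\Gh_{ij}\to\gb_{ij}$, $\Gammah^i_{jk}\to\Gammab^i_{jk}$ and $N_A^l\to n_A^l$ by construction of the regularizing sequences. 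Multiplying these together via Proposition~\ref{prop:arbitrarySequences}, and using Proposition~\ref{prop:uniqueNormZero} to pass the adjoints in the definitions of $\Ph^2$ and $\ShA^2$ through the limit, one obtains $(\Ph^2)^{ik}\to(\P^2)^{ik}$ and $\trh\ShA^2=\sum_i(\ShA^2)^i_i\to\sum_i(\S_A^2)^i_i=\Tr\S_A^2$.

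Assembling, $\Kh_\alpha$ is a finite linear combination of finite products of convergent sequences, so by Proposition~\ref{prop:arbitrarySequences} it converges in operator norm to the same combination of limits, which is exactly the expression for $K$ displayed above; hence $\{\Kh_\alpha\}$ is a discrete curvature of $\Sigma$. The claim about $\chih$ then follows immediately from Theorem~\ref{thm:discreteEuler} applied to this discrete curvature: that theorem gives that $\hbar_\alpha\Tr[\gammah_\alpha\Kh_\alpha]$ is a discrete Euler characteristic, and expanding the product and using cyclicity of the trace, $\Tr[\gammah_\alpha(\gammah_\alpha^\dagger)^{-1}(\trh\ShA^2)\gammah_\alpha^{-1}]=\Tr[(\gammah_\alpha^\dagger)^{-1}\trh\ShA^2]$; replacing $(\gammah_\alpha^\dagger)^{-1}$ by $\gammah_\alpha^{-1}$ here changes the $\hbar_\alpha$-weighted trace by a sequence that still tends to zero, since both converge in norm to $1/\gamma$, the sequence $\trh\ShA^2$ is norm-bounded, and $N_\alpha\hbar_\alpha$ stays bounded; this produces exactly the displayed $\chih$, which is therefore a discrete Euler characteristic.

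There is no deep obstacle, but one point requires care: keeping track of which convergence hypotheses are used where. Only $C^0$-convergence of the constituent sequences is ever needed, and this needs \eqref{eq:matrixCommutator} at order $l=1$ only, so a $C^1$-convergent regularization already suffices --- the stronger $C^2$-convergence of $\{\gammah_\alpha\}$ in the standing assumptions of this section is imposed for other results, not for this one. Unitality, by contrast, is used essentially: it is what upgrades ``the inverse sequences $\gammah_\alpha^{-1}$ and $(\gammah_\alpha^\dagger)^{-1}$ exist and are uniformly norm-bounded'' to ``they converge to $1/\gamma$'', and without this the limit of $\Kh_\alpha$ could not be identified.
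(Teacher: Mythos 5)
Your proposal is correct and follows exactly the route of the paper's own (one-line) proof: the paper simply invokes the substitution rule of Proposition \ref{prop:arbitrarySequences} applied to the curvature formula descending from Theorem \ref{thm:ricciCurvature}, which is precisely the argument you spell out in detail. Your additional observations --- that only $C^0$-convergence of the constituents and hence a $C^1$-convergent regularization is needed, that unitality enters through the convergence of the inverse sequences, and that cyclicity of the trace reconciles $(\gammah^\dagger)^{-1}$ with $\gammah^{-1}$ in the expression for $\chih$ --- are all consistent with, and make explicit, what the paper leaves implicit.
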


\begin{proof}
  By using the way of constructing matrix sequences given through
  Proposition \ref{prop:arbitrarySequences}, the result follows
  immediately from Theorem \ref{thm:ricciCurvature}.
\end{proof}

\noindent In the case $M=\reals^m$ it follows from the results in
Section \ref{sec:surfaces} that when $(\Ta,\hbar)$ is a
$C^2$-convergent matrix regularization, then the sequence
\begin{equation}
  \begin{split}
    \Kh_\alpha=\frac{1}{\ha^4}\sum_{j,k,l=1}^m\Bigg(\frac{1}{2}&\paraa{\gammah_\alpha^\dagger}^{-2}
    \Ccom{\Xa^j}{\Xa^k}{\Xa^k}\Ccom{\Xa^j}{\Xa^l}{\Xa^l}\gammah_\alpha^{-2}\\
    &-\frac{1}{4}\paraa{\gammah_\alpha^\dagger}^{-2}\Ccom{\Xa^j}{\Xa^k}{\Xa^l}
    \Ccom{\Xa^j}{\Xa^k}{\Xa^l}\gammah_\alpha^{-2}\Bigg).
  \end{split}
\end{equation}
converges to the Gaussian curvature of $\Sigma$.

\subsection{Two simple examples}\label{sec:simpleExamples}

\subsubsection{The round fuzzy sphere}\label{sec:fuzzySphere}

\noindent For the sphere embedded in $\reals^3$ as 
\begin{align}
  \xv = (x^1,x^2,x^3) = (\cos\vphi\sin\theta,\sin\vphi\sin\theta,\cos\theta)
\end{align}
with the induced metric
\begin{align}
  (g_{ab}) = 
  \begin{pmatrix}
    1 & 0 \\ 0 & \sin^2\theta
  \end{pmatrix},
\end{align}
it is well known that one can construct a matrix regularization from
representations of $su(2)$. Namely, let $S_1,S_2,S_3$ be hermitian
$N\times N$ matrices such that $[S^j,S^k] = i{\epsilon^{jk}}_lS^l$,
$(S^1)^2+(S^2)^2+(S^3)^2=(N^2-1)/4$, and define
\begin{align}
  X^i = \frac{2}{\sqrt{N^2-1}}S^i.
\end{align}
Then there exists a map $\TN$ (which can be defined through expansion
in spherical harmonics) such that $\TN(x^i)=X^i$ and
$(\TN,\hbar=2/\sqrt{N^2-1})$ is a unital matrix regularization of
$(S^2,\sqrt{g}d\theta\wedge d\vphi)$ \cite{h:phdthesis}.  A unit normal of the sphere in
$\reals^3$ is given by $N\in T\reals^3$ with $N=x^i\d_i$, which gives
$N^i=X^i$, and one can compute the discrete curvature as
\begin{align}
  \Kh_N = -\frac{1}{\hbar^2}\sum_{i<j=1}^m\Tr[X^i,X^j]^2 = \mid_N 
\end{align}
which gives the discrete Euler characteristic
\begin{align}
  \chih_N &= \hbar\Tr\Kh_N = \hbar N = \frac{2N}{\sqrt{N^2-1}},
\end{align}
converging to $2$ as $N\to\infty$.

\subsubsection{The fuzzy Clifford torus}\label{sec:fuzzyTorus}

\noindent The Clifford torus in $S^3$ can be regarded as embedded in $\reals^4$ through
\begin{align*}
  \xv = (x^1,x^2,x^3,x^4) = \frac{1}{\sqrt{2}}(\cos\vphi_1,\sin\vphi_1,\cos\vphi_2,\sin\vphi_2),
\end{align*}
with the induced metric
\begin{align*}
  (g_{ab}) = \frac{1}{2}
  \begin{pmatrix}
    1 & 0 \\ 0 & 1
  \end{pmatrix},
\end{align*}
and two orthonormal vectors, normal to the tangent plane of the
surface in $T\reals^4$, can be written as
\begin{align*}
  N_\pm = x^1\d_1 + x^2\d_2 \pm x^3\d_3  \pm x^4\d_4.
\end{align*}
To construct a matrix regularization for the Clifford torus, one
considers the $N\times N$ matrices $g$ and $h$ with non-zero elements
\begin{align*}
  &g_{kk} = \omega^{k-1}\quad\text{ for $k=1,\ldots,N$}\\
  &h_{k,k+1} = 1\quad\text{ for $k=1,\ldots,N-1$}\\
  &h_{N,1} = 1,
\end{align*}
where $\omega=\exp(i2\theta)$ and $\theta=\pi/N$. These matrices satisfy
the relation $hg=\omega gh$.  The map $\TN$ is then defined on the
Fourier modes
\begin{align*}
  Y_{\mv}=e^{i\mv\cdot\vphiv}=e^{im_1\vphi_1+im_2\vphi_2}
\end{align*}
as  
\begin{align*}
  \TN(Y_{\mv}) = \omega^{\frac{1}{2}m_1m_2}g^{m_1}h^{m_2},
\end{align*}
and the pair $(\TN,\hbar=\sin\theta)$ is a unital matrix
regularization of the Clifford torus with respect to
$\sqrt{g}d\vphi_1\wedge d\vphi_2$
\cite{ffz:trigonometric,h:diffeomorphism}. Thus, using this map one
finds that
\begin{align*}
  &X^1 = T(x^1) = \frac{1}{\sqrt{2}}T(\cos\vphi_1) = \frac{1}{2\sqrt{2}}(\gd+g)\\
  &X^2 = T(x^2) = \frac{1}{\sqrt{2}}T(\sin\vphi_1) = \frac{i}{2\sqrt{2}}(\gd-g)\\
  &X^3 = T(x^3) = \frac{1}{\sqrt{2}}T(\cos\vphi_2) = \frac{1}{2\sqrt{2}}(\hd+h)\\
  &X^4 = T(x^4) = \frac{1}{\sqrt{2}}T(\sin\vphi_2) = \frac{i}{2\sqrt{2}}(\hd-h)
\end{align*}
which implies that $N_\pm^1=X^1$, $N_\pm^2=X^2$, $N_\pm^3=\pm X^3$ and
$N_\pm^4=\pm X^4$. By a straightforward computation one obtains
\begin{align*}
  -\frac{1}{\hbar^2}\sum_{i,j=1}^4[X^i,X^j]^2 = 2\mid
\end{align*}
and therefore
\begin{align*}
  \frac{1}{2\hbar^2}\sum_{i,j=1}^4[X^i,N^j_+][X^j,N^i_+]=-\frac{1}{2\hbar^2}\sum_{i,j=1}^4[X^i,X^j]^2 = \mid,
\end{align*}
and since $[X^1,X^2]=[X^3,X^4]=0$ it follows that
\begin{align*}
  \frac{1}{2\hbar^2}\sum_{i,j=1}^4[X^i,N^j_-][X^j,N^i_-]
  =\frac{1}{2\hbar^2}\sum_{i,j=1}^4[X^i,X^j]^2 = -\mid.
\end{align*}
This implies that the discrete curvature vanishes, i.e.
\begin{align*}
  \Kh_N = \frac{1}{2\hbar^2}\sum_{i,j=1}^4[X^i,N^j_+][X^j,N^i_+]
  +\frac{1}{2\hbar^2}\sum_{i,j=1}^4[X^i,N^j_-][X^j,N^i_-] = \mid-\mid = 0,
\end{align*}
which immediately gives $\chih_N=0$.

The following two examples will show that even in the smooth matrix
regularization of the torus it is easy to find sequences that are not
smooth, and that the regularization can be deformed into a non-smooth
matrix regularization.

\begin{example}\label{ex:nonSmoothSequence}
  Let $(\Ta,\ha)$ be the matrix regularization of the Clifford torus
  as in Section \ref{sec:fuzzyTorus}. For each $N$, define the matrix
  \begin{align*}
    \thetah = \diag(\hbar^s,0,\ldots,0),
  \end{align*}
  for some fixed $0<s\leq 1$. Clearly, it holds that
  \begin{align*}
    \limainfty\norm{\thetah-\Ta(0)}=\limainfty\norm{\thetah}=0,
  \end{align*}
  i.e. $\thetah$ $C^0$-converges to $0$. Let us show that $\thetah$
  does not $C^1$-converge to $0$. If $\thetah$ $C^1$-converges to $0$,
  then it must hold that
  \begin{align*}
    \limainfty\norm{\frac{1}{i\hbar}[\thetah,\Ta(f)]-\Ta\paraa{\{0,f\}}}=
    \limainfty\norm{\frac{1}{i\hbar}[\thetah,\Ta(f)]}=0
  \end{align*}
  for all $f\in C^\infty(\Sigma)$.
 For $H=2\sqrt{2}T_{(N)}(x^3)=h+h^\dagger$ one
  computes the eigenvalues of $A=\frac{1}{i\hbar}[\thetah,H]$ to be
  \begin{align*}
    \lambda_1=i\sqrt{2}\hbar^{s-1}\quad
    \lambda_2=-i\sqrt{2}\hbar^{s-1}\quad
    \lambda_3=\cdots=\lambda_N=0.
  \end{align*}
  Hence, the norm of $A$ does \emph{not} tend to $0$, which implies
  that $\thetah$ is not $C^1$-convergent.
\end{example}

\begin{example}\label{ex:deformedFuzzyTorus}
  Let $(\Ta,\ha)$ be the matrix regularization of the Clifford torus
  as in Section \ref{sec:fuzzyTorus}. For each $N$, define the matrix
  \begin{align*}
    \thetah = \diag(\hbar^s,0,\ldots,0),
  \end{align*}
  for some fixed $1<s\leq 2$.  Let us now deform the fuzzy torus to
  obtain a $C^1$-convergent matrix regularization that is not
  $C^2$-convergent. Defining
  \begin{align*}
    \Sa(f) = \Ta(f) + \mu(f)\thetah,
  \end{align*}
  where $\mu:C^\infty(\Sigma)\to\reals$ is an arbitrary linear
  functional, one can readily check that $(\Sa,\ha)$ is a $C^1$-convergent
  matrix regularization of the Clifford torus. Let us now prove that
  $(\Sa,\ha)$ is not a $C^2$-convergent matrix regularization, and let us for definiteness
  choose $\mu$ to be the evaluation map at $\vphi_1=\vphi_2=0$.

  In a $C^2$-convergent matrix regularization it holds that
  \begin{align*}
    \limainfty\norm{-\frac{1}{\hbar^2}\Ccom{\Sa(u)}{\Sa(v)}{\Sa(w)}
    -\Sa\paraa{\{\{u,v\},w\}}}=0,
  \end{align*}
  for all $u,v,w\in C^\infty(\Sigma)$. Choosing
  $u=2\sqrt{2}\cos\vphi_2$ and $v=w=2\sqrt{2}\sin\vphi_2$ gives
  $\Sa(u)=h^\dagger+h+2\sqrt{2}\thetah$, $\Sa(v)=i(h^\dagger-h)$ and
  $\{u,v\}=0$. Thus
  \begin{align*}
    \limainfty&\norm{-\frac{1}{\hbar^2}\Ccom{\Sa(u)}{\Sa(v)}{\Sa(w)}
    -\Sa\paraa{\{\{u,v\},w\}}}\\
  &=\limainfty\frac{2\sqrt{2}}{\hbar^2}\norm{\Ccom{\thetah}{i(h^\dagger-h)}{i(h^\dagger-h)}}
  =\limainfty 2\sqrt{2}\paraa{2+\sqrt{6}}\hbar^{s-2},
  \end{align*}  
  which does not converge to $0$. Hence, $(\Sa,\ha)$ is a
  $C^1$-convergent, but not $C^2$-convergent, matrix regularization of the Clifford torus.
\end{example}

\subsection{Axially symmetric surfaces in $\reals^3$}

\noindent Recall the classical description of general axially symmetric surfaces:
\begin{align}\label{axiallyuvparam}
  \xv &= \paraa{f(u)\cos v, f(u)\sin v, h(u)}\\
  \nv &= \frac{\pm 1}{\sqrt{h'(u)^2+f'(u)^2}}
  \paraa{h'(u)\cos v,h'(u)\sin v,-f'(u)}\notag,
\end{align}
which implies
\begin{align*}
  \paraa{g_{ab}}=
  \begin{pmatrix}
    f'^2+h'^2 & 0 \\
    0 & f^2
  \end{pmatrix}\qquad
  \paraa{h_{ab}} =\frac{\pm 1}{\sqrt{h'^2+f'^2}}
  \begin{pmatrix}
    h'f''-h''f' & 0\\
    0 & -fh'
  \end{pmatrix},
\end{align*}
where $h_{ab}$ are the components of the second fundamental form. The
Euler characteristic can be computed as
\begin{align}
  \chi = \frac{1}{2\pi}\int K\sqrt{g} = 
  -\int_{u_-}^{u_+}\frac{h'\paraa{h'f''-h''f'}}{\paraa{f'^2+h'^2}^{3/2}}du
  =-\frac{f'}{\sqrt{f'^2+h'^2}}\Bigg|_{u_-}^{u_+},
\end{align}
which is equal to zero for tori (due to periodicity) and equal to $+2$ for spherical surfaces ($f'(u_{\pm})=\mp\infty$ if $u=h$).

While a general procedure for constructing matrix analogues of
surfaces embedded in $\reals^3$ was obtained in
\cite{abhhs:noncommutative,abhhs:fuzzy} (cp. also \cite{a:repcalg}),
let us restrict now to $h(u)=u=z$, hence describe the axially
symmetric surface $\Sigma$ as a level set, $C=0$, of
\begin{align}
  C(\xv) = \frac{1}{2}\paraa{x^2+y^2-f^2(z)},
\end{align}
to carry out the construction in detail, and make the resulting
formulas explicit. Defining 
\begin{align}
  \{F(\xv),G(\xv)\}_{\reals^3} = \nabla C\cdot\paraa{\nabla F\times\nabla G},
\end{align}
one has 
\begin{align}
  \{x,y\}=-\ff'(z),\quad\{y,z\}=x,\quad\{z,x\} = y,
\end{align}
respectively
\begin{align}\label{eq:XYZCommutators}
  [X,Y] = i\hbar \ff'(Z),\quad [Y,Z]=i\hbar X,\quad [Z,X]=i\hbar Y
\end{align}
for the ``quantized'' (``non-commutative'') surface. In terms of the parametrization given in 
(\ref{axiallyuvparam}), the above Poisson bracket is equivalent to
\begin{align}
  \{F(u,v),G(u,v)\} = \eps^{ab}\paraa{\d_aF}\paraa{\d_b{G}}
\end{align}
where $\d_1=\d_v$ and $\d_2=\d_u$. By finding matrices of increasing
dimension satisfying (\ref{eq:XYZCommutators}), one can construct a
map $\Ta$ having the properties (\ref{eq:matrixProduct}) and
(\ref{eq:matrixCommutator}) of a matrix regularization restricted to
polynomial functions in $x,y,z$ (cp. \cite{a:phdthesis}).

For the round 2-sphere, $f(z)=1-z^2$, (\ref{eq:XYZCommutators}) gives
the Lie algebra $su(2)$, and its celebrated irreducible
representations satisfy
\begin{align}\label{eq:su2sumsquare}
  X^2+Y^2+Z^2 = \mid\quad\text{if}\quad \hbar=\frac{2}{\sqrt{N^2-1}}.
\end{align}
When $f$ is arbitrary, one can still find finite dimensional
representations of (\ref{eq:XYZCommutators}) as follows: rewrite
(\ref{eq:XYZCommutators}) as
\begin{align}
  &[Z,W] = \hbar W\label{eq:ZWCommutator}\\
  &[W,\Wd] = -2\hbar\ff'(Z)
\end{align}
implying that $z_i-z_j=\hbar$ whenever $W_{ij}\neq 0$ and $Z$
diagonal. Assuming $W=X+iY$ with non-zero matrix elements
$W_{k,k+1}=w_k$ for $k=1,\ldots,N-1$, one thus obtains (with
$w_0=w_N=0$)
\begin{align*}
  &Z_{kk} = \frac{\hbar}{2}\paraa{N+1-2k}\\
  &w_k^2-w_{k-1}^2=-2\hbar\ff'\paraa{\hbar(N+1-2k)/2}\equiv Q_k,
\end{align*}
which implies that
\begin{align*}
  w_ k^2 = \sum_{l=1}^kQ_l
\end{align*}
and the only non-trivial problem is to find the analogue of
(\ref{eq:su2sumsquare}). To this end, define 
\begin{align}\label{eq:fhdef}
  \fh^2 = X^2+Y^2 = \frac{1}{2}\paraa{W\Wd+\Wd W},
\end{align}
with $W$ given as above. As $Z$ has pairwise different eigenvalues,
the diagonal matrix given in (\ref{eq:fhdef}) can be thought of as a
function of $Z$; hence as $\fh^2(Z)$. It then trivially holds that
\begin{align}
  \Ch = X^2+Y^2-\fh^2(Z)=0,
\end{align}
for the representation defined above. The quantization of $\hbar$
comes through the requirement that $\fh^2$ should correspond to
$f^2$. While for the \emph{round} 2-sphere $\fh^2$ equals $f^2$,
provided $\hbar$ is chosen as in (\ref{eq:su2sumsquare}), it is easy
to see that in general they can not coincide, as
\begin{align*}
  [X^2+Y^2-&f(Z)^2,W] = [(W\Wd+\Wd W)/2-f(Z)^2,W]\\
  &=\frac{1}{2}W[\Wd,W]+\frac{1}{2}[\Wd,W]W-f(Z)[f(Z),W]-[f(Z),W]f(Z)\\
  &=\cdots=f(Z)\paraa{\hbar f'(Z)W-[f(Z),W]}+\paraa{\hbar f'(Z)W-[f(Z),W]}f(Z)
\end{align*}
with off-diagonal elements
\begin{align*}
  \paraa{f(z_k)+f(z_{k-1})}\paraa{\hbar f'(z_k)-(f(z_k)-f(z_{k-1}))}
\end{align*}
that are in general non-zero (hence $X^2+Y^2+f^2(Z)$ is usually not
even a Casimir, except in leading order).

How it \emph{does} work is perhaps best illustrated by a non-trivial example, $f(z)=1-z^4$:
\begin{align}
    w_k^2 =\frac{\hbar^4}{2}&\parab{(N+1)^3k-3(N+1)^2k(k+1)+\label{eq:wk}\\
      &2(N+1)k(k+1)(2k+1)-2k^2(k+1)^2}\notag\\
  \fh_k^2 = \frac{1}{2}(w_k^2&+w^2_{k-1}) = 
  \frac{\hbar^4}{4}\parab{(N+1)^3(2k-1)-6(N+1)^2k^2\notag\\ &\qquad\qquad+4(N+1)k(2k^2+1)-4k^2(k^2+1)}\notag
\end{align}
(note that $w^2_0=w_N^2=0$ is explicit in (\ref{eq:wk})) so that
\begin{align}
  \paraa{X^2+Y^2+Z^4}_{kk} = \hbar^4\bracketc{\frac{(N+1)^4}{16}-\frac{(N+1)^3}{4}+k(N+1)-k^2}.
\end{align}
Expressing the last two terms via $Z^2$ (note that the cancellation of
$k^3$ and $k^4$ terms shows the absence of $Z^3$ and higher
corrections) one finds
\begin{align*}
  X^2+Y^2+Z^4+\hbar^2Z^2 &= \hbar^4\frac{(N+1)^2}{16}\parab{(N+1)^2-4(N+1)+4}\mid\\
  &=\hbar^4\frac{(N^2-1)^2}{16}\mid,
\end{align*}
which equals $\mid$ if $\hbar$ is chosen as $2/\sqrt{N^2-1}$. Note
that this is the \emph{same} expression for $\hbar$ then for the round
sphere, $f^2=1-z^2$ (cp. (\ref{eq:su2sumsquare})).

A more elegant way to derive the quantum Casimir (cp. also \cite{r:repnonlinear,gps:beyondfuzzy})
\begin{align}
  Q = X^2+Y^2+Z^4+\hbar^2Z^2
\end{align}
is to calculate
\begin{align*}
  [X^2+Y^2+Z^4,W] &= [(W\Wd+\Wd W)/2+Z^4,W]\\
  &= \cdots = \hbar^2[W,Z^2],
\end{align*}
which determines the terms proportional to $\hbar$ in the Casimir. 

Due to the general formula 
\begin{align}
  \Kh = -\frac{1}{8\hbar^4}\eps_{jkl}\eps_{ipq}(\gammah^\dagger)^{-2}\coma{X^i,[X^k,X^l]}\coma{X^j,[X^p,X^q]}\gammah^{-2}
\end{align}
one obtains, for the axially symmetric surfaces discussed above,
\begin{align}
  \Kh = \gammah^{-2}\parac{(\ff')^2(Z)+\frac{1}{2\hbar}[W,\ff'(Z)]\Wd+\frac{1}{2\hbar}\Wd[W,\ff'(Z)]}\gammah^{-2}
\end{align}
with
\begin{align}
  \gammah^2 = \frac{1}{2}\paraa{W\Wd+\Wd W}+(\ff')^2(Z)
  =f(Z)^2\paraa{f'(Z)^2+\mid} + O(\hbar),
\end{align}
giving
\begin{align}
  &\Kh  = -\paraa{f'(Z)^2+\mid}^{-2}f(Z)^{-1}f''(Z) + O(\hbar)
\end{align}
and for $f(z)^2=1-z^4$ one has 
\begin{align}
  &\Kh = \paraa{4Z^6+\mid-Z^4}^{-2}\paraa{6Z^2-2Z^6}+O(\hbar)\\
  &\gammah^2 = \mid-Z^4+4Z^6+O(\hbar).
\end{align}
Note that (cp. (\ref{eq:ZWCommutator}))
$z_j-z_{j-1}=\hbar$ for arbitrary $f$, and that (due to the axial
symmetry) $\Kh$ and $\gammah^2$ are \emph{diagonal} matrices, so that
\begin{align*}
  \chih = \hbar\Tr\paraa{\sqrt{\gammah^2}\Kh},
\end{align*}
in this case simply being a Riemann sum approximation of $\int
K\sqrt{g}$, indeed converges to 2, the Euler characteristic of
spherical surfaces.

\subsection{A bound on the eigenvalues of the matrix Laplacian}\label{sec:laplaceBound}

\noindent As we have shown, many of the objects in differential
geometry can be expressed in terms of Nambu brackets. Let us now
illustrate, in the case of surfaces, that some of the techniques used
to prove classical theorems can be implemented for matrix
regularizations. In particular, let us prove that a lower bound on the
discrete Gaussian curvature induces a lower bound for the eigenvalues
of the discrete Laplacian. For simplicity, we shall consider the case
when $M=\reals^m$ and, in the following, all repeated indices are
assumed to be summed over the range $1,\ldots,m$.

Let us start by introducing the matrix analogue of the
operator $D^i$:
\begin{align*}
  \Dha^i(X) = \frac{1}{i\ha}\gammaha^{-1}[X,\Xa^i].
\end{align*}
These operators obey a rule of ``partial integration'', namely
\begin{align}\label{eq:discretePartialInt}
  \Tr\paraa{\gammaha\Dha^i(X)Y} = -\Tr\paraa{\gammaha\Dha^i(Y)X},
\end{align}
which is in analogy with the fact that
\begin{align*}
  \int_\Sigma \paraa{\gamma D^i(f)h}\omega = -\int_\Sigma\paraa{\gamma D^i(h)f}\omega.
\end{align*}
In view of Proposition \ref{prop:covderivFormulas}, it is natural to
make the following definition:
\begin{definition}
  Let $(\Ta,\ha)$ be a matrix regularization of $(\Sigma,\omega)$. The
  \emph{Discrete Laplacian on $\Sigma$} is a sequence
  $\{\Deltaha\}$ of linear maps defined as
  \begin{align*}
    \Deltaha(X) = \Dha^j\Dha^j(X) = 
    -\frac{1}{\ha^2}\gammaha^{-1}\big[\gammaha^{-1}[X,\Xa^j],\Xa^j\big],
  \end{align*}
  where $X$ is a $\Na\times\Na$ matrix. An \emph{eigenmatrix sequence
    of $\Deltaha$} is a convergent sequence $\{\uha\}$ such that
  $\Deltaha(\uha)=\la\uha$ for all $\alpha$ and
  $\displaystyle\limainfty\la=\lambda$.
\end{definition}

\begin{proposition}
  A $C^2$-convergent eigenmatrix sequence of $\Deltaha$ converges to
  an eigenfunction of $\Delta$ with eigenvalue
  $\lambda=\displaystyle\limainfty\la$.
\end{proposition}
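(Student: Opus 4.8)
The plan is to show that the matrix sequence $\{\Deltaha(\uha)\}$ $C^0$-converges to the function $\Delta u$, where $u\in C^\infty(\Sigma)$ is the function to which the given $C^2$-convergent sequence $\{\uha\}$ converges. Granting this, the defining relation $\Deltaha(\uha)=\la\uha$ together with $\la\to\lambda$ will force $\Delta u=\lambda u$ by uniqueness of limits, and a short normalization argument then shows $u\neq 0$, so that $u$ is a genuine eigenfunction.

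First I would unwind the $C^2$-convergence of $\{\uha\}$: it says precisely that for every $f$ the sequences $\dha^{x^j}(\uha)=\tfrac1{i\ha}[\uha,\Xa^j]$ and $\dha^f\dha^{x^j}(\uha)$ converge to $\{x^j,u\}$ and $\{f,\{x^j,u\}\}$, i.e. that $\{\dha^{x^j}(\uha)\}$ is itself $C^1$-convergent with limit $\{x^j,u\}$. Since $\Dha^j(\uha)=\gammaha^{-1}\,\dha^{x^j}(\uha)$ and $\{\gammaha^{-1}\}$ is $C^1$-convergent to $1/\gamma$ — which, under the standing hypotheses, follows from the proposition on inverses of $C^k$-convergent sequences applied to $\{\gammaha\}$ — Proposition~\ref{prop:arbitrarySequences} shows that $\{\Dha^j(\uha)\}$ is $C^1$-convergent to $\tfrac1\gamma\{x^j,u\}$. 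Applying $\dha^{x^j}$ once more (now only $C^0$-convergence is needed) and then multiplying by $\gammaha^{-1}$, Proposition~\ref{prop:arbitrarySequences} yields that
\begin{align*}
  \Deltaha(\uha)=\sum_j\gammaha^{-1}\,\dha^{x^j}\!\paraa{\gammaha^{-1}\,\dha^{x^j}(\uha)}
\end{align*}
$C^0$-converges to $\sum_j \tfrac1\gamma\{x^j,\tfrac1\gamma\{x^j,u\}\}$. By Proposition~\ref{prop:covderivFormulas} (with $M=\reals^m$, so ambient indices are Euclidean) one has $\Delta u=D_ID^I(u)=\sum_j\tfrac1\gamma\{\tfrac1\gamma\{u,x^j\},x^j\}$, and antisymmetry of the bracket gives $\tfrac1\gamma\{x^j,\tfrac1\gamma\{x^j,u\}\}=\tfrac1\gamma\{\tfrac1\gamma\{u,x^j\},x^j\}$ for each $j$; hence the limit is exactly $\Delta u$, that is $\norm{\Deltaha(\uha)-\Ta(\Delta u)}\to 0$. (This identification is also immediate from the substitution principle discussed after Proposition~\ref{prop:arbitrarySequences}, $\Deltaha$ being the matrix analogue of $D_jD^j$.)

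To finish, I would observe that $\norm{\la\uha-\lambda\,\Ta(u)}\le |\la|\,\norm{\uha-\Ta(u)}+|\la-\lambda|\,\norm{\Ta(u)}\to 0$, using $\la\to\lambda$, $C^0$-convergence of $\{\uha\}$, and boundedness of $\norm{\Ta(u)}$ from (\ref{eq:matrixNorm}). Combining this with $\norm{\Deltaha(\uha)-\Ta(\Delta u)}\to 0$ and the exact equality $\Deltaha(\uha)=\la\uha$ gives $\norm{\Ta(\Delta u-\lambda u)}\to 0$; since $\{\Ta(\Delta u-\lambda u)\}$ $C^0$-converges to $\Delta u-\lambda u$, the proposition characterising when the limiting norm of a convergent sequence vanishes yields $\Delta u=\lambda u$. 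Finally, if $\{\uha\}$ is normalised so that $\limai 2\pi\ha\Tr(\uha^\dagger\uha)=\int_\Sigma|u|^2\omega\neq 0$ (which is meaningful by Proposition~\ref{prop:uniqueNormZero}, Proposition~\ref{prop:arbitrarySequences} and (\ref{eq:multiMatrixTrace})), then $u\neq 0$, so $u$ is an eigenfunction of $\Delta$ with eigenvalue $\lambda$.

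The main obstacle will be the bookkeeping of orders of convergence: to apply Proposition~\ref{prop:arbitrarySequences} to the product defining the \emph{second} application of $\Dha^j$ one genuinely needs $\{\Dha^j(\uha)\}$ to be $C^1$-convergent, not merely $C^0$-convergent, which is exactly what the $C^2$-hypothesis on $\{\uha\}$ (together with $C^1$-convergence of $\{\gammaha^{-1}\}$) buys. One must also track the sign convention in (\ref{eq:PbracketDef}) and in the definition of $D^I$ so that the double bracket is identified with $+\Delta u$ rather than $-\Delta u$; everything else is a routine application of the limit lemmas already established.
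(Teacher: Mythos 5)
Your proof is correct and follows essentially the same route as the paper: the paper also telescopes $\Ta(\Delta u-\lambda u)$ against $\Deltaha\uha-\la\uha=0$ and invokes the fact that a convergent sequence with vanishing norm has limit $0$. The only difference is that you spell out the convergence $\Deltaha(\uha)\to\Ta(\Delta u)$ (via Proposition \ref{prop:arbitrarySequences} and the $C^1$-convergence of $\{\gammaha^{-1}\}$), which the paper leaves implicit, and you add the normalization remark ensuring $u\neq 0$.
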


\begin{proof}
  Given the assumption that $\uha$ is a $C^2$-convergent matrix
  sequence converging to $u$, we want to prove that $\Delta u-\lambda
  u=0$. By Proposition \ref{prop:uniqueNormZero} this is equivalent to
  proving that $\limainfty||\Ta(\Delta u-\lambda u)||=0$. One obtains
  \begin{align*}
    \limainfty&\norm{\Ta(\Delta u-\lambda u)} = 
    \limainfty\norm{\Ta(\Delta u)-\Deltaha\uha+\Deltaha\uha-\lambda \Ta(u)+\lambda\uha-\lambda\uha}\\
    &\leq\limainfty\parac{\norm{\Ta(\Delta u)-\Deltaha\uha}+|\lambda|\norm{-\Ta(u)+\uha}+\norm{\Deltaha\uha-\lambda\uha}}\\
    &=\limainfty\norm{\Deltaha\uha-\lambda\uha}
    \leq\limainfty\parab{\norm{\Deltaha\uha-\la\uha}+|\lambda-\la|\norm{\uha}}=0,
  \end{align*}
  since $\Deltaha\uha-\la\uha=0$ and $\la$ converges to $\lambda$.
\end{proof}

\noindent The way curvature is introduced in the classical proof of
the bound on the eigenvalues, is through the commutation of covariant
derivatives. Let us state the corresponding result for matrix
regularizations.

\begin{proposition}\label{prop:curvatureeq}
  Let $(\Ta,\ha)$ be a $C^2$-convergent matrix regularization of
  $(\Sigma,\omega)$. If $\{\uha\}$ is a $C^3$-convergent matrix
  sequence then
  \begin{align*}
    \limainfty&\Big|\Big|
      \Dha^i(\uha)\Dha^i\Dha^j\Dha^j(\uha)
      -\Dha^i(\uha)\Dha^j\Dha^j\Dha^i(\uha)\\
      &\qquad-\ldbrack\Dha^i,\Dha^j\rdbrack(\uha)\Dha^i\Dha^j(\uha)
       +\Kha\Dha^i(\uha)\Dha^i(\uha)\Big|\Big|=0,
  \end{align*}
  where $\ldbrack\cdot,\cdot\rdbrack$ denotes the commutator with
  respect to composition of maps.
\end{proposition}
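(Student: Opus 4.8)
The plan is to read the displayed combination of matrices as a ``Poisson word'' assembled, by products and repeated commutators, out of the convergent sequences at hand --- namely $\uha$ (which is $C^3$-convergent to $u$), $\Xa^i=\Ta(x^i)$ and $\gammaha^{-1}$ (both $C^2$-convergent, by the hypotheses on the regularization and on $\gammaha$), and $\Kha$ (which is $C^0$-convergent to $K$) --- and to show that this word $C^0$-converges to the corresponding expression in the derivations $D^i$. By the fact proved just after Proposition \ref{prop:arbitrarySequences}, namely that a matrix sequence converging to a function $f$ has operator norm tending to $0$ precisely when $f=0$, it then suffices to know that the limiting function is identically $0$; this is exactly Proposition \ref{prop:covariantDRicci} in the surface case.

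First I would record how $\Dha^i=\gammaha^{-1}\circ\frac{1}{i\ha}[\,\cdot\,,\Xa^i]$ interacts with the degree of convergence. If $\{V_\alpha\}$ is $C^l$-convergent to $v$ with $l\ge 1$, then $\frac{1}{i\ha}[V_\alpha,\Xa^i]=\dha^{x^i}(V_\alpha)$ is $C^{l-1}$-convergent to $\{v,x^i\}$ --- because $\dha^{g_1\cdots g_{l-1}}\circ\dha^{x^i}=\dha^{g_1\cdots g_{l-1}x^i}$ and $\partial^{g_1\cdots g_{l-1}}\{v,x^i\}=\partial^{g_1\cdots g_{l-1}x^i}(v)$ --- and multiplying by the $C^2$-convergent $\gammaha^{-1}$ preserves $C^{\min(l,2)}$-convergence, by Proposition \ref{prop:arbitrarySequences}. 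Thus each application of $\Dha^i$ costs one unit of regularity (below level $2$) and yields convergence to $D^i$ applied to the limit. Iterating: $\Dha^i\Dha^j\Dha^j(\uha)$ and $\Dha^j\Dha^j\Dha^i(\uha)$ are $C^0$-convergent to $D^iD^jD^j(u)$ and $D^jD^jD^i(u)$; $\Dha^i\Dha^j(\uha)$ and hence $\fatcom{\Dha^i,\Dha^j}(\uha)$ are $C^1$-convergent to $D^iD^j(u)$ and $\fatcom{D^i,D^j}(u)$; $\Dha^i(\uha)$ is $C^2$-convergent to $D^i(u)$; and $\Kha$ is $C^0$-convergent to $K$. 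Taking products and sums with Proposition \ref{prop:arbitrarySequences}, the displayed matrix
\begin{align*}
  \Dha^i(\uha)\Dha^i\Dha^j\Dha^j(\uha)-\Dha^i(\uha)\Dha^j\Dha^j\Dha^i(\uha)-\fatcom{\Dha^i,\Dha^j}(\uha)\,\Dha^i\Dha^j(\uha)+\Kha\,\Dha^i(\uha)\Dha^i(\uha)
\end{align*}
is $C^0$-convergent to
\begin{align*}
  D_i(u)D^iD_jD^j(u)-D_i(u)D^jD_jD^i(u)-\fatcom{D_i,D^j}(u)\,D_iD^j(u)+K\,D_i(u)D^i(u),
\end{align*}
where all indices are raised and lowered by $\delta_{ij}$ since $M=\reals^m$ (for a surface the multi-index $I$ collapses to a single index).

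It remains to see that this limit vanishes. Since $\Sigma$ is two-dimensional the Ricci endomorphism is $\R=K\,\mid$, so $g(\R(\nabla u),\nabla u)=K\,|\nabla u|^2=K\,D_i(u)D^i(u)$ by Proposition \ref{prop:covderivFormulas}; inserting this into Proposition \ref{prop:covariantDRicci} gives exactly $D_i(u)D^iD_jD^j(u)-D_i(u)D^jD_jD^i(u)-\fatcom{D_i,D^j}(u)D_iD^j(u)+K\,D_i(u)D^i(u)=0$. Hence the matrix sequence above converges to $0$, and therefore its operator norm tends to $0$, which is the assertion. I expect the only delicate point to be the regularity bookkeeping --- verifying that precisely one derivative is consumed by each commutator with an $\Xa^i$, and that $\gammaha^{-1}$ genuinely is $C^2$-convergent --- and this is exactly where the quantitative hypotheses are used tightly: the innermost triple $\Dha^i\Dha^j\Dha^j$ exhausts all three derivatives of $\uha$, while the outer factor $\Dha^i(\uha)$ and the products with $\gammaha^{-1}$ and $\Kha$ rely on $C^2$-convergence of the regularization (and of $\gammaha^{-1}$). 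No new estimate beyond Proposition \ref{prop:arbitrarySequences} and the identity of Proposition \ref{prop:covariantDRicci} is required.
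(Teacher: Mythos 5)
Your argument is correct and is essentially the paper's own proof: the paper disposes of this proposition by citing Proposition \ref{prop:covariantDRicci} together with Proposition \ref{prop:arbitrarySequences} and the remark that $\R_{ab}=Kg_{ab}$ for surfaces, which is exactly the combination you assemble. Your explicit regularity bookkeeping --- each application of $\Dha^i$ consuming one degree of convergence, so that the innermost triple exhausts the $C^3$ hypothesis while the remaining factors only need $C^0$ --- is precisely the content the paper leaves implicit in the word ``immediately''.
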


\begin{proof}
  The result follows immediately from Proposition
  \ref{prop:covariantDRicci} and Proposition
  \ref{prop:arbitrarySequences}. Note that in the case of surfaces it
  holds that $\R_{ab}=Kg_{ab}$, where $K$ is the Gaussian curvature of
  $\Sigma$.
\end{proof}

\noindent A useful corollary is the following:

\begin{proposition}\label{prop:intCurvatureEq}
  Let $(\Ta,\ha)$ be a $C^2$-convergent matrix regularization of
  $(\Sigma,\omega)$. If $\{\uha\}$ is a $C^2$-convergent matrix sequence then
  \begin{align*}
    \limainfty&\ha\Tr\parab{\gammaha\Dha^i\Dha^j\Dha^j(\uha)\Dha^i(\uha)}=\\
  &\limainfty\ha\Tr\parab{\gammaha\Dha^j\Dha^i\Dha^j(\uha)\Dha^i(\uha)
    -\gammaha\Kha\Dha^i(\uha)\Dha^i(\uha)}
  \end{align*}  
\end{proposition}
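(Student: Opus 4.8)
The plan is to deduce Proposition \ref{prop:intCurvatureEq} from Proposition \ref{prop:curvatureeq} by multiplying by $\gammaha$, taking the trace, and exploiting the discrete partial-integration rule \eqref{eq:discretePartialInt} to move one of the operators $\Dha^i$ off the doubly-differentiated factor. Concretely, I would start from the four-term identity in Proposition \ref{prop:curvatureeq}, multiply on the left by $\gammaha$, take $\ha\Tr$, and observe that under $\limainfty$ every term converges (by Proposition \ref{prop:arbitrarySequences} and the assumption that $\{\uha\}$ is at least $C^2$-convergent, so that each appearing product of discrete derivatives of $\uha$ is a convergent matrix sequence). This reduces the claim to showing that the two ``mixed'' terms
\begin{align*}
  \limainfty\ha\Tr\parab{\gammaha\,\fatcom{\Dha^i,\Dha^j}(\uha)\,\Dha^i\Dha^j(\uha)}
\end{align*}
and the term coming from the difference of $\Dha^i(\uha)\Dha^i\Dha^j\Dha^j(\uha)$ and $\Dha^i(\uha)\Dha^j\Dha^i\Dha^j(\uha)$ rearrange, after partial integration, into the stated form.

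The key manipulation is partial integration. Using \eqref{eq:discretePartialInt} in the form $\Tr(\gammaha\Dha^i(A)B)=-\Tr(\gammaha\Dha^i(B)A)$, I would rewrite
\begin{align*}
  \ha\Tr\parab{\gammaha\,\Dha^i(\uha)\,\Dha^i\Dha^j\Dha^j(\uha)}
  = -\ha\Tr\parab{\gammaha\,\Dha^i\Dha^j\Dha^j(\uha)\,\Dha^i(\uha)},
\end{align*}
which already produces the left-hand side of the proposition up to sign; a parallel partial integration on the $\Dha^i(\uha)\Dha^j\Dha^j\Dha^i(\uha)$ term plus a relabelling turns it into the $\Dha^j\Dha^i\Dha^j(\uha)$ term on the right-hand side. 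The commutator term and the curvature term $\Kha\Dha^i(\uha)\Dha^i(\uha)$ are handled by noting that $\Kha$ commutes with everything inside the trace in the limit (it converges to the scalar-valued function $K$, so $\limainfty\norm{[\Kha,Y_\alpha]}=0$ for any convergent sequence $\{Y_\alpha\}$), and that the commutator term can, after one more partial integration, be absorbed into the reorganization of the third-derivative terms. Assembling these identities and passing to the limit yields exactly the displayed equality.

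The main obstacle I expect is bookkeeping rather than conceptual: one must be careful that the partial-integration identity \eqref{eq:discretePartialInt} is being applied to genuinely convergent matrix sequences (so that all the $\ha\Tr(\gammaha\,\cdots)$ expressions have well-defined limits and no spurious $O(1/\ha)$ terms survive), and one must track exactly how the commutator term $\fatcom{\Dha^i,\Dha^j}(\uha)\Dha^i\Dha^j(\uha)$ from Proposition \ref{prop:curvatureeq} cancels against the antisymmetric part of the triple-derivative terms. A subtlety is that \eqref{eq:discretePartialInt} is an \emph{exact} matrix identity (not merely asymptotic), so it may be applied freely at each fixed $\alpha$; this is what lets one rearrange before taking limits. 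Once the algebra is arranged so that the surviving terms are precisely $\gammaha\Dha^j\Dha^i\Dha^j(\uha)\Dha^i(\uha)$ and $-\gammaha\Kha\Dha^i(\uha)\Dha^i(\uha)$, the conclusion follows from Proposition \ref{prop:curvatureeq} together with the continuity of $\ha\Tr(\gammaha\,\cdot\,)$ on convergent sequences guaranteed by Proposition \ref{prop:arbitrarySequences}.
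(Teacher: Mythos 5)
Your overall route is the same as the paper's: start from the four-term identity of Proposition \ref{prop:curvatureeq}, multiply by $\gammaha$, take $\ha\Tr$, and use the exact partial-integration rule (\ref{eq:discretePartialInt}) together with a relabelling $i\leftrightarrow j$ to fold the commutator term $\fatcom{\Dha^i,\Dha^j}(\uha)\Dha^i\Dha^j(\uha)$ into the triple-derivative term, producing $\gammaha\Dha^j\Dha^i\Dha^j(\uha)\Dha^i(\uha)$. That part of the argument is fine.

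There is, however, one genuine gap: a regularity mismatch that you assert away rather than resolve. Proposition \ref{prop:curvatureeq} is stated for $C^3$-convergent sequences, whereas the present proposition assumes only that $\{\uha\}$ is $C^2$-convergent. Your claim that ``every term converges \dots\ so that each appearing product of discrete derivatives of $\uha$ is a convergent matrix sequence'' is false at this level of regularity: under mere $C^2$-convergence the sequences $\Dha^i\Dha^j\Dha^j(\uha)$ need not converge (cf.\ Example \ref{ex:nonSmoothSequence}, which shows that applying one more discrete derivative to a sequence that converges at a given order can destroy convergence), so you cannot directly invoke Proposition \ref{prop:curvatureeq}, nor Proposition \ref{prop:arbitrarySequences}, on the terms as written. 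The repair is exactly the tool you already identify in your last paragraph but do not deploy for this purpose: since (\ref{eq:discretePartialInt}) is an exact identity at each fixed $\alpha$, one first integrates by parts inside every trace so that no factor carries more than two discrete derivatives of $\uha$ --- e.g.
\begin{align*}
  \hbara\Tr\parab{\gammaha\Dha^i\Dha^j\Dha^j(\uha)\Dha^i(\uha)}
  =-\hbara\Tr\parab{\gammaha\Dha^j\Dha^j(\uha)\Dha^i\Dha^i(\uha)},
\end{align*}
--- and only then passes to the limit; in this rewritten form $C^2$-convergence suffices, and the identity of limits follows by approximating with (or arguing as for) $C^3$-convergent data. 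This is precisely the step the paper makes explicit, and without it the deduction from Proposition \ref{prop:curvatureeq} does not go through under the stated hypotheses.
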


\begin{proof}
  It follows from Proposition \ref{prop:curvatureeq} that for a
  $C^3$-convergent sequence $\uha$ it holds that
  \begin{align*}
      &\limainfty\hbara\Tr\Big(\gammaha\Dha^i\Dha^j\Dha^j(\uha)\Dha^i(\uha)
      -\gammaha\Dha^j\Dha^j\Dha^i(\uha)\Dha^i(\uha)\\
      &\qquad-\gammaha\ldbrack\Dha^i,\Dha^j\rdbrack(\uha)\Dha^i\Dha^j(\uha)
       +\gammaha\Kha\Dha^i(\uha)\Dha^i(\uha)\Big)=0.   
  \end{align*}
  Due to the appearance of a trace, the above holds even for
  $C^2$-convergent sequences, since e.g.
  \begin{align*}
    \hbara\Tr\gammaha\Dha^i\Dha^j\Dha^j(\uha)\Dha^i(\uha)
    =-\hbara\Tr\gammaha\Dha^i\Dha^i(\uha)\Dha^j\Dha^j(\uha),
  \end{align*}
  and the latter expression only requires $C^2$-convergence. Thus, one obtains
  \begin{align*}
    \limainfty&\ha\Tr\parab{\gammaha\Dha^i\Dha^j\Dha^j(\uha)\Dha^i(\uha)}
    =\limainfty\ha\Tr\Big(\gammaha\Dha^j\Dha^j\Dha^i(\uha)\Dha^i(\uha)\\
    &+\gammaha\ldbrack\Dha^i,\Dha^j\rdbrack(\uha)\Dha^i\Dha^j(\uha)
       -\gammaha\Kha\Dha^i(\uha)\Dha^i(\uha)\Big)\\
       &=\limainfty\ha\Tr\parab{\gammaha\Dha^j\Dha^i\Dha^j(\uha)\Dha^i(\uha)
    -\gammaha\Kha\Dha^i(\uha)\Dha^i(\uha)},
  \end{align*}
  by using equation (\ref{eq:discretePartialInt}).
\end{proof}

\begin{proposition}\label{prop:laplaceDtwoineq}
  Let $(\Ta,\ha)$ be a matrix regularization of
  $(\Sigma,\omega)$. If $\{\uha\}$ is a $C^2$-convergent matrix sequence then
  \begin{align*}
    \limainfty \ha\Tr\parab{\Dha^i\Dha^j(\uha)\Dha^j\Dha^i(\uha)}\geq
    \frac{1}{2}\limainfty \ha\Tr\paraa{\Deltaha(\uha)}^2.
  \end{align*}
\end{proposition}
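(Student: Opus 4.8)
The plan is to reduce the inequality to its classical counterpart on $\Sigma$ and to invoke the elementary pointwise Hessian estimate available in dimension two. Everything passes through the $\alpha\to\infty$ limit via Proposition \ref{prop:arbitrarySequences}; a direct argument at finite $\alpha$ seems awkward, since $\Dha^i\Dha^j(\uha)$ is not Hermitian and admits no exact symmetric/trace-free splitting over the $\reals^m$-indices, so going to the limit is the natural route.

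First I would establish the needed convergence. Since $\{\uha\}$ is $C^2$-convergent, say to $u\in C^\infty(\Sigma)$, and $\Xa^j=\Ta(x^j)$, applying $\dha^{x^j}$ lowers the order of convergence by one (directly from the definition of $C^k$-convergence), so $\frac{1}{i\ha}[\uha,\Xa^j]$ is $C^1$-convergent; multiplying by the ($C^2$-, hence $C^1$-) convergent sequence $\{\gammaha^{-1}\}$ and using Proposition \ref{prop:arbitrarySequences}, the sequence $\{\Dha^j(\uha)\}$ is $C^1$-convergent. Iterating the same two steps once more shows that $\{\Dha^i\Dha^j(\uha)\}$ is a $C^0$-convergent sequence with limit $D^iD^j(u)$, and likewise that $\{\Deltaha(\uha)\}=\{\Dha^j\Dha^j(\uha)\}$ is $C^0$-convergent to $\Delta(u)$ (cp. Proposition \ref{prop:covderivFormulas}); note that each factor acquires the same convention-dependent sign, so these limits are unambiguous. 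It is precisely here that the hypothesis of $C^2$-convergence is used: it is exactly what is needed for the twofold iterates to be $C^0$-convergent, hence to have computable $\ha$-weighted traces.

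Next, since products of $C^0$-convergent sequences are again $C^0$-convergent, formula (\ref{eq:multiMatrixTrace}) applied to each of the finitely many terms gives
\[
  \limainfty \ha\Tr\parab{\Dha^i\Dha^j(\uha)\Dha^j\Dha^i(\uha)}
  = \frac{1}{2\pi}\int_\Sigma \sum_{i,j=1}^m D^iD^j(u)\,D^jD^i(u)\;\omega
  = \frac{1}{2\pi}\int_\Sigma |\nabla^2u|^2\,\omega,
\]
where the last equality is (\ref{eq:nablaSquSq}), and, in the same way,
\[
  \limainfty \ha\Tr\paraa{\Deltaha(\uha)}^2 = \frac{1}{2\pi}\int_\Sigma (\Delta u)^2\,\omega .
\]

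Finally I would invoke the pointwise inequality. Since $\dim\Sigma=2$, at each point $\nabla^2u$ is a symmetric endomorphism of a two-dimensional space, with eigenvalues $\kappa_1,\kappa_2$ say, so $|\nabla^2u|^2=\kappa_1^2+\kappa_2^2\ge\frac12(\kappa_1+\kappa_2)^2=\frac12(\Delta u)^2$ there. Integrating this against the positive density $\omega$ and dividing by $2\pi$ yields $\limainfty \ha\Tr(\Dha^i\Dha^j(\uha)\Dha^j\Dha^i(\uha))\ge\frac12\limainfty \ha\Tr(\Deltaha(\uha))^2$, which is the assertion. The only genuinely nontrivial input is this classical Hessian--Laplacian estimate (and the identity (\ref{eq:nablaSquSq}) behind it); the main technical care lies in the convergence-order bookkeeping above and in the fact, guaranteed by Proposition \ref{prop:arbitrarySequences}, that $\ha\Tr$ of a product of $C^0$-convergent matrix sequences converges to the corresponding integral over $\Sigma$.
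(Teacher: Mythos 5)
Your proof is correct and follows essentially the same route as the paper: pass to the $\alpha\to\infty$ limit to convert the weighted traces into $\frac{1}{2\pi}\int_\Sigma|\nabla^2u|^2\,\omega$ and $\frac{1}{2\pi}\int_\Sigma(\Delta u)^2\,\omega$ via (\ref{eq:multiMatrixTrace}) and (\ref{eq:nablaSquSq}), then apply the two-dimensional pointwise estimate $|\nabla^2u|^2\geq\tfrac12(\Delta u)^2$. The paper's proof is just a terser version of yours, omitting the convergence bookkeeping you spell out.
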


\begin{proof}
  By using the fact that $|\nabla^2u|^2\geq \frac{1}{2}(\Delta u)^2$
  (for 2-dimensional manifolds) one obtains
  \begin{align*}
    \limainfty&\ha\Tr\parab{\Dha^i\Dha^j(\uha)\Dha^j\Dha^i(\uha)}
    =\frac{1}{2\pi}\int_\Sigma|\nabla^2u|^2\omega
    \geq\frac{1}{4\pi}\int_\Sigma(\Delta u)^2\omega\\
    &=\limainfty\frac{1}{2}\ha\Tr\paraa{\Deltaha(\uha)}^2,
  \end{align*}
  since $\uha$ is assumed to $C^2$-converge to $u$.
\end{proof}

\begin{theorem}
  Let $(\Ta,\ha)$ be a $C^2$-convergent matrix regularization of
  $(\Sigma,\omega)$ and let $\{\uha\}$ be a $C^2$-convergent
  eigenmatrix sequence of $\Deltaha$ with eigenvalues $\{-\la\}$. If
  $\Kha\geq\kappa\mid_{\Na}$ for some $\kappa\in\reals$ and all
  $\alpha>\alpha_0$, then $\displaystyle\limainfty\lambda_\alpha\geq
  2\kappa$.
\end{theorem}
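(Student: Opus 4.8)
The plan is to transcribe the classical Bochner--Lichnerowicz argument — an eigenfunction $u$ of $\Delta$ on a surface with Gaussian curvature $K\geq\kappa$ satisfies $\lambda\geq 2\kappa$ — into the matrix setting, with Proposition \ref{prop:intCurvatureEq} playing the role of the integrated Bochner identity and Proposition \ref{prop:laplaceDtwoineq} that of the inequality $|\nabla^2u|^2\geq\tfrac12(\Delta u)^2$. First I would substitute the eigenvalue relation $\Deltaha(\uha)=\Dha^j\Dha^j(\uha)=-\la\uha$ into Proposition \ref{prop:intCurvatureEq}: on the left-hand side $\Dha^i\Dha^j\Dha^j(\uha)=-\la\,\Dha^i(\uha)$, so that term is exactly $-\la\,\ha\Tr\paraa{\gammaha\Dha^i(\uha)\Dha^i(\uha)}$, and in the first term on the right I use the partial integration \eqref{eq:discretePartialInt} (with $X=\Dha^i\Dha^j(\uha)$, $Y=\Dha^i(\uha)$) to turn $\ha\Tr\paraa{\gammaha\Dha^j\Dha^i\Dha^j(\uha)\Dha^i(\uha)}$ into $-\ha\Tr\paraa{\gammaha\Dha^j\Dha^i(\uha)\Dha^i\Dha^j(\uha)}$.

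Letting $\alpha\to\infty$ and using that $\uha$ is $C^2$-convergent (so $\Dha^i(\uha)\to D^i(u)$ and $\Dha^i\Dha^j(\uha)\to D^iD^j(u)$), together with Proposition \ref{prop:arbitrarySequences}, \eqref{eq:multiMatrixTrace}, \eqref{eq:nablaSquSq}, $\Kha\to K$ and the surface identity $\R_{ab}=Kg_{ab}$, the three terms converge to $-\tfrac{\lambda}{2\pi}\int_\Sigma\gamma|\nabla u|^2\omega$, $-\tfrac{1}{2\pi}\int_\Sigma\gamma|\nabla^2u|^2\omega$ and $-\tfrac{1}{2\pi}\int_\Sigma\gamma K|\nabla u|^2\omega$ respectively, which after rearrangement is the integrated Bochner identity
\begin{align*}
  \lambda\int_\Sigma\gamma|\nabla u|^2\omega = \int_\Sigma\gamma|\nabla^2u|^2\omega + \int_\Sigma\gamma K|\nabla u|^2\omega .
\end{align*}
Since $\Delta u=-\lambda u$, integration by parts on $\Sigma$ (the limiting form of \eqref{eq:discretePartialInt}) gives $\int_\Sigma\gamma|\nabla u|^2\omega=\lambda\int_\Sigma\gamma u^2\omega$ and hence $\int_\Sigma\gamma(\Delta u)^2\omega=\lambda^2\int_\Sigma\gamma u^2\omega$. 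Combining the identity with the pointwise inequality $|\nabla^2u|^2\geq\tfrac12(\Delta u)^2$ integrated against $\gamma\,\omega$ (cf.\ Proposition \ref{prop:laplaceDtwoineq}) and with $\int_\Sigma\gamma K|\nabla u|^2\omega\geq\kappa\int_\Sigma\gamma|\nabla u|^2\omega$ then gives $\tfrac12\lambda^2\int_\Sigma\gamma u^2\omega\geq\kappa\lambda\int_\Sigma\gamma u^2\omega$, and dividing by $\lambda\int_\Sigma\gamma u^2\omega>0$ yields $\lambda\geq 2\kappa$.

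The step that needs care is passing from the matrix inequality $\Kha\geq\kappa\mid_{\Na}$ to the integral inequality $\int_\Sigma\gamma K|\nabla u|^2\omega\geq\kappa\int_\Sigma\gamma|\nabla u|^2\omega$. The cleanest argument is spectral: for any sequence $C^0$-converging to $K$ the smallest eigenvalue tends to $\inf_\Sigma K$ (apply \eqref{eq:matrixProduct} and \eqref{eq:matrixTrace} to polynomials of $K$), so $\Kha\geq\kappa\mid_{\Na}$ for large $\alpha$ forces $K\geq\kappa$ everywhere, and the integral inequality is then immediate from $\gamma>0$ and $|\nabla u|^2\geq 0$. Alternatively one can stay at matrix level, using that $\gammaha\sum_i\Dha^i(\uha)\Dha^i(\uha)$ is, up to a term vanishing in the limit, a nonnegative matrix, so that $\ha\Tr\paraa{\gammaha(\Kha-\kappa\mid_{\Na})\sum_i\Dha^i(\uha)\Dha^i(\uha)}\geq 0$ asymptotically. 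One further point: the final division uses $\lambda>0$, i.e.\ that the limiting eigenfunction is non-constant — the only case in which the bound is not vacuous — and throughout one relies on Proposition \ref{prop:arbitrarySequences} to ensure the products of $C^2$-convergent sequences appearing under the trace are convergent, which is also why $C^2$ (rather than $C^3$) convergence of $\{\uha\}$ suffices, exactly as in Proposition \ref{prop:intCurvatureEq}.
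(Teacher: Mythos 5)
Your argument is correct and follows essentially the same route as the paper's proof: substitute the eigenvalue relation into Proposition \ref{prop:intCurvatureEq}, integrate by parts via (\ref{eq:discretePartialInt}), and combine Proposition \ref{prop:laplaceDtwoineq} with the curvature bound --- the only difference being that you pass to the continuum integrals before assembling the final inequality, whereas the paper keeps everything as matrix traces until the last line. Your explicit treatment of the two points the paper glosses over (converting $\Kha\geq\kappa\mid_{\Na}$ into the pointwise bound $K\geq\kappa$ needed for the limiting integral inequality, and the nondegeneracy $\lambda\int_\Sigma\gamma u^2\omega>0$ required for the final division) is a welcome addition rather than a deviation.
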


\begin{proof}
  Let $\{\uha\}$ be a hermitian eigenmatrix sequence of $\Deltaha$ with
  eigenvalues $\{-\la\}$. First, one rewrites
  \begin{equation}\label{eq:LasqDtwo}
    \begin{split}
      \Tr\gammaha\Deltaha(\uha)^2 &= \Tr\paraa{\gammaha\Dha^i\Dha^i(\uha)\Dha^j\Dha^j(\uha)}\\
      &= -\la\Tr\paraa{\uha\gammaha\Dha^i\Dha^i(\uha)}
      = \la\Tr\paraa{\gammaha\Dha^i(\uha)\Dha^i(\uha)}.
    \end{split}
  \end{equation}
  Then, one makes use of Proposition \ref{prop:intCurvatureEq} to write
  \begin{align*}
    \limai&\ha\Tr\gammaha\Deltaha(\uha)^2 = -\limai\ha\Tr\paraa{\gammaha\Dha^i\Dha^j\Dha^j(\uha)\Dha^i(\uha)}\\
    &=\limai\ha\Tr\Big(-\gammaha\Dha^j\Dha^i\Dha^j(\uha)\Dha^i(\uha)+\gammaha\Kha\Dha^i(\uha)\Dha^i(\uha)\Big)\\
    &=\limai\ha\Tr\parab{\gammaha\Dha^j\Dha^i(\uha)\Dha^i\Dha^j(\uha)+\gammaha\Kha\Dha^i(\uha)\Dha^i(\uha)}.
  \end{align*}
  Using the assumption that $\Kha\geq\kappa\mid$ together with
  Proposition \ref{prop:laplaceDtwoineq} one obtains
  \begin{align*}
    \limai\ha\Tr\gammaha\Deltaha(\uha)^2 &\geq
    \limai\ha\Tr\parac{\frac{1}{2}\gammaha\Deltaha(\uha)^2+\kappa\gammaha\Dha^i(\uha)\Dha^i(\uha)}\\
    &=\limai\parac{\frac{1}{2}\la+\kappa}\ha\Tr\paraa{\gammaha\Dha^i(\uha)\Dha^i(\uha)},
  \end{align*}
  where (\ref{eq:LasqDtwo}) has been used. One can now compare the
  above inequality with (\ref{eq:LasqDtwo}) to obtain
  \begin{align*}
    \frac{1}{2}(\lambda-2\kappa)\limai\ha\Tr\paraa{\gammaha\Dha^i(\uha)\Dha^i(\uha)}\geq 0.
  \end{align*}
  Since 
  \begin{align*}
    \limai\ha\Tr\paraa{\gammaha\Dha^i(\uha)\Dha^i(\uha)}
    =\frac{1}{2\pi}\int_\Sigma\gamma|\nabla u|^2\omega\geq 0,
  \end{align*}
  due to the fact that $\gamma$ is a positive function, it follows
  that $\lambda\geq 2\kappa$.
\end{proof}

\noindent Although the above proof depends on the fact that the matrix
regularization is associated to a surface (and therefore, the results
of differential geometry can be employed), we believe that, under
suitable conditions on the matrix algebra, there exists a proof that
is independent of this correspondence.

\section*{Acknowledgments}

\noindent J.A. would like to thank the Institut des Hautes \'Etudes
Scientifiques for hospitality and H. Shimada for discussions on matrix
regularizations,  while J.H. thanks M. Bordemann for many discussions
on related topics (and for switching talks at the October 2009 AEI
workshop ``Membranes, Minimal Surfaces and Matrix Limits'').

\bibliographystyle{alpha}
\bibliography{nambudiscrete}

\end{document}